\newcommand{\R}{\mathbb{R}}
\newcommand{\eps}{\varepsilon}
\newtheorem{theo}{\bf Theorem}[section]
\newtheorem{lem}[theo]{\bf Lemma}
\newtheorem{pro}[theo]{\bf Proposition}
\newtheorem{cor}[theo]{\bf Corollary}
\newtheorem{defi}[theo]{\bf Definition}
\theoremstyle{remark}
\newtheorem{rem}[theo]{Remark}
\newtheorem{counterexample}[theo]{Counter-example}
\newtheorem{ex}[theo]{Example}
\numberwithin{equation}{section}
\begin{document}

\title{\textbf{Non-convex coercive Hamilton-Jacobi equations: \\
  Guerand's relaxation revisited}}
\author{Nicolas Forcadel, Cyril Imbert et Régis Monneau}
\date{\today}
  \maketitle

  \begin{abstract}
    This work is concerned with Hamilton-Jacobi equations of evolution type posed
    in domains and supplemented with boundary conditions. Hamiltonians are coercive
    but are neither convex nor quasi-convex. We analyse boundary conditions when
    understood in the sense of viscosity solutions. This analysis is based
    on the study of boundary conditions of evolution type. More precisely,
    we  give a new formula for the relaxed boundary conditions derived by J. Guerand
    ({\it J. Differ. Equations}, 2017). This new point of view unveils a connection between
    the relaxation operator and the classical
    Godunov flux from the theory of conservation laws. We apply our methods to
     two classical boundary value problems.
    It is shown that the relaxed Neumann boundary  condition is expressed in terms of Godunov's flux while
    the relaxed Dirichlet boundary condition reduces to an obstacle problem at the boundary
    associated with the lower non-increasing envelope of the Hamiltonian. 
\end{abstract}


\section{Introduction}

When a partial differential equation is posed in a domain, the 
boundary condition may be in conflict with the equation. This typically happens
when characteristics reach the boundary. More specifically, such a phenomenon is observed for 
evolutive Hamilton-Jacobi (HJ) equations. A classical way to handle this discrepancy is to impose either
the boundary condition or the equation at the boundary, both in terms of viscosity solutions.
Such viscosity solutions are called \emph{weak}. 

The second and third authors studied HJ equations on networks \cite{zbMATH06713740} for
coercive and convex Hamiltonians. The equations are supplemented with conditions at junctions (vertices). 
When these conditions are compatible with the maximum principle, it is easy to construct weak viscosity solutions by Perron's method.
In this previous work, the authors proved that these weak solutions satisfy \emph{other} boundary (junction) conditions in a
strong sense. The family of these \emph{relaxed} boundary conditions is completely characterized by a real parameter, the \emph{flux limiter}. 

When Hamiltonians are coercive but not necessarily convex, J. Guerand has shown in the mono-dimen-sional setting  \cite{zbMATH06731793} that
 it is also possible to characterize relaxed boundary conditions associated with general boundary
conditions compatible with the maximum principle. In this case, the family of relaxed boundary conditions is much richer and
is characterized by a family of \emph{limiter} points. 
With this tool at hand, she established a comparison principle for general boundary conditions in this framework.

In this work, we are interested in the multi-dimensional case and we treat both dynamic, Neumann
and Dirichlet boundary conditions. As far as dynamic boundary conditions are concerned,
we give a new formula for the relaxed boundary conditions obtained by J. Guerand. It is easily derived
from the definition of weak viscosity  solutions. We also exhibit a deeply rooted connection between the relaxed
dynamic boundary condition and Godunov's flux for conservation laws. This classical numerical flux also appears in the formula for the relaxed Neumann boundary condition.
As far as the Dirichlet boundary condition is concerned, relaxation yields an obstacle problem at the boundary.


\subsection{Coercive Hamilton-Jacobi equations posed on domains}

In this article, we are interested in the study of Hamilton-Jacobi (HJ) equations of evolution type posed in a $C^1$ domain
$\Omega$ of $\R^d$ and supplemented with boundary conditions. We shall see that the study of boundary conditions of
evolution type 
\begin{equation}\label{eq:HJ-bc}
\begin{cases}
u_t + H(t,x,Du)=0, &  t>0, x \in \Omega,\\
u_t + F_0(t,x,Du)=0, & t>0, x \in \partial \Omega
\end{cases}
\end{equation}
is suprisingly fruitful in the understanding of general boundary conditions that are compatible with the maximum principle.
In particular, it gives a new insight on the classical inhomogeneous Neumann problem,
\begin{equation}\label{eq:HJ-neu}
\begin{cases}
  u_t + H(t,x,Du)=0, &  t>0, x \in \Omega,\\
\frac{\partial u}{\partial n}+h(t,x)=0, & t>0, x \in \partial \Omega
\end{cases}
\end{equation}
and on the Dirichlet problem,
\begin{equation}\label{eq:HJ-dir}
\begin{cases}
u_t + H(t,x,Du)=0, &  t>0, x \in \Omega,\\
u=g(t,x), & t>0, x \in \partial \Omega.
\end{cases}
\end{equation}

In~\eqref{eq:HJ-neu} and \eqref{eq:HJ-dir}, the
functions  $h,g \colon (0, +\infty) \times \partial \Omega \to \R$ are continuous and $\frac{\partial u}{\partial n}$ denotes
the normal derivative associated with  the outward unit normal vector field $n\colon \partial \Omega \to \R^d$. 
Throughout this work, we make the following assumption,
\begin{equation}\label{a:HandF0}
  H,F_0\colon (0+\infty)\times \Omega \times \R^d \to \R \text{ are continuous, }
  \partial \Omega \in C^1, F_0 \text{ is non-decreasing in } \frac{\partial u}{\partial n}
    \text{ and } H \text{ is coercive.}
\end{equation}
The coercivity of the Hamiltonian $H$ means that \(H(t,x,p)\) tends to $+\infty$ as \(|p|\to +\infty .\)
We assume very often that $F_0$ is \emph{semi-coercive}, that is to say  it satisfies the following condition,
\begin{equation}
  \label{eq:semi-coercive}
  F_0(t,x,p)\to +\infty \quad \text{as}\quad p\cdot n(x) \to +\infty.
\end{equation}
It is also useful to deal with cases in which this condition on the function $F_0$ is not satisfied.
It is for instance interesting to consider constant $F_0$ functions.  

\paragraph{Weak and strong viscosity solutions for HJ equations posed in domains.}
It is known that classical solutions to Hamilton-Jacobi equations do not exist in general while viscosity solutions
are easily constructed by Perron's method \cite{zbMATH04142546}. As far as boundary conditions are concerned, because characteristics can exit the domain,
boundary conditions are generally ``lost'' for Hamilton-Jacobi equations. As first observed by H. Ishii \cite{zbMATH01414249}, it is useful
to consider viscosity solutions that satisfy either the equation or the boundary condition on $\partial \Omega$.
Such viscosity solutions are called weak. They are easily constructed thanks to Perron's method \cite{zbMATH04142546}. On the contrary, if the boundary condition is always satisfied on
$\partial \Omega$, we say that viscosity solutions are strong. It is usually easier to prove uniqueness of strong viscosity solutions
than to prove uniqueness of weak ones. 

In this article, it is proved that weak viscosity
solutions associated with \eqref{eq:HJ-bc} or  \eqref{eq:HJ-neu} or \eqref{eq:HJ-dir}  are strong viscosity solutions
for \emph{other} boundary conditions that we identify. We start with \eqref{eq:HJ-bc}.

\begin{theo}[Relaxed boundary condition -- \cite{zbMATH06713740,MR3690310,zbMATH06731793}] \label{t:main}
  Assume that $H,F_0\colon \R^d \to \R$ are continuous, $p \mapsto F_0(p)$ is non-decreasing with respect to $p \cdot n$,
  $H$ is coercive and $F_0$ is semi-coercive (in the sense of \eqref{eq:semi-coercive}). 

Then, there exists a continuous semi-coercive function $  \mathfrak R F_0 \colon \R^d \to \R$ such that
a function $u \colon (0,+\infty) \times \Omega$ is a  weak viscosity solution of \eqref{eq:HJ-bc}
if and only if it is a strong   viscosity solution of
\begin{equation*}
\begin{cases}
u_t + H(t,x,Du)=0, & t>0, x\in \Omega,\\
u_t + \mathfrak R F_0(t,x,Du)=0, & t>0, x \in \partial \Omega.
\end{cases}
\end{equation*}
\end{theo}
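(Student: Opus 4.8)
The plan is to reduce everything to a local statement at the boundary, to read off the relaxed boundary Hamiltonian from the definition of weak solution by perturbing test functions in the normal direction, and then to establish the equivalence by combining those perturbations with the coercivity of $H$ and with the interior equation.

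\emph{Step 1 (localisation).} The interior equation $u_t+H(Du)=0$ is untouched and the claim is local, so after localising near a point of $\partial\Omega$ and flattening the boundary by a $C^1$ diffeomorphism I may assume $\Omega=\{x_d>0\}$, $n=-e_d$, $H$ still coercive; I write a covector as $p=(p',\mu)$ with $\mu=p\cdot n$ its normal component and let $d(x)=\mathrm{dist}(x,\partial\Omega)$. The basic observation is: if $\phi$ touches $u$ from above (resp. below) at $(t_0,x_0)\in\partial\Omega$, then for every $C\ge0$ so does $\phi+Cd$ (resp. $\phi-Cd$), with jet $(\phi_t,p',\mu-C)$ (resp. $(\phi_t,p',\mu+C)$). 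Hence the weak subsolution dichotomy $\min(u_t+H,u_t+F_0)\le0$ valid at $(t_0,x_0)$ for a jet $(a,p',\mu)$ in fact holds for all jets $(a,p',\mu')$ with $\mu'\le\mu$, and the weak supersolution dichotomy for all $\mu'\ge\mu$.

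\emph{Step 2 (the relaxed Hamiltonian).} For $p'\perp n$ put $H_\sharp(\sigma):=H(p'+\sigma n)$ (coercive in $\sigma$) and $F_\sharp(\sigma):=F_0(p'+\sigma n)$ (non-decreasing, $\to+\infty$ as $\sigma\to+\infty$), and define
$$\mathfrak R F_0(p):=\inf_{\sigma\ge\, p\cdot n}\ \max\bigl(H_\sharp(\sigma),F_\sharp(\sigma)\bigr).$$
The link with Godunov's flux appears on restricting this infimum: over the (possibly non-degenerate) interval on which $F_\sharp$ is constant equal to $F_\sharp(p\cdot n)$ it produces the term $\min_\sigma H_\sharp(\sigma)$, which is exactly Godunov's flux of $H_\sharp$ between $p\cdot n$ and the largest ``$F_0$-state''; the non-convexity of $H$ is precisely what makes the full infimum (and hence $\mathfrak R F_0$) genuinely richer, in agreement with Guerand's description by a family of limiter points. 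One then checks the properties needed below: $\mathfrak R F_0$ is non-decreasing in $p\cdot n$, satisfies $\mathfrak R F_0\ge\max(F_0,\min H_\sharp)$ and $\mathfrak R F_0\le\max\bigl(H_\sharp(p\cdot n),F_\sharp(p\cdot n)\bigr)$, is semi-coercive, is idempotent ($\mathfrak R\mathfrak R F_0=\mathfrak R F_0$, so already-relaxed/flux-limited-type Hamiltonians are fixed points), and — the first genuinely technical point — is continuous: since $\max(H_\sharp,F_\sharp)$ is coercive in $\sigma$, the infimum is attained on a compact set that depends upper-semicontinuously on $p$, and one must control the transition at which $p\cdot n$ crosses the argmin.

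\emph{Step 3 (the equivalence and the main obstacle).} That a strong viscosity solution of the relaxed system is a weak solution of \eqref{eq:HJ-bc} is immediate: for a test function from above at a boundary point with jet $(a,p',\mu)$, the strong subsolution inequality gives $a+\mathfrak R F_0(p',\mu)\le0$, and since $\mathfrak R F_0(p',\mu)\ge\min(H_\sharp(\mu),F_\sharp(\mu))$ the weak dichotomy follows; the supersolution case is symmetric, using $\mathfrak R F_0(p',\mu)\le\max(H_\sharp(\mu),F_\sharp(\mu))$. For the converse, the supersolution half is still soft: the perturbation that \emph{increases} the normal component, together with the coercivity of $H$ (which makes $u_t+H\ge0$ automatic for large $\mu'$), turns the weak dichotomy for all $\mu'\ge\mu$ into $b\ge-\inf_{\mu'\ge\mu}\max(H_\sharp,F_\sharp)(\mu')=-\mathfrak R F_0(p',\mu)$, the strong supersolution inequality. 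The subsolution half is the main obstacle: the perturbation only \emph{decreases} $\mu$, so the weak dichotomy a priori yields merely $a+\sup_{\mu'\le\mu}\min(H_\sharp,F_\sharp)(\mu')\le0$, which is strictly weaker than $a+\mathfrak R F_0(p',\mu)\le0$, and closing the gap forces one to use the interior equation in both senses. Here I would reduce the strong subsolution test to a special family of test functions $\phi(t,x)=\psi(t)+\Theta(d(x))$ whose profiles $\Theta$ are built from $H_\sharp$ and the limiter points of $\mathfrak R F_0$, and then split, at a contact point $(t_0,x_0)\in\partial\Omega$, into two cases: either $u$ detaches from $\phi$ in the inward normal direction, in which case subtracting a small multiple of $d$ pushes the maximum strictly into $\Omega$ and the \emph{interior subsolution} inequality, read along the profile, furnishes $a+\mathfrak R F_0\le0$; or $u$ stays tangent to $\phi$ inward, in which case a test function from below at a boundary point appears and the boundary/interior \emph{supersolution} inequalities, again read along the profile, give the same bound. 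I expect essentially all the difficulty of the theorem to sit in constructing these profiles and carrying out this case analysis; once this is done, uniqueness of the decomposition and elementary one-variable envelope manipulations complete the proof, and idempotency of $\mathfrak R$ shows the resulting boundary condition is indeed ``relaxed''.
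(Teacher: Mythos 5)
Your Step 1 is sound and is exactly the paper's mechanism: perturbing a test function by $\pm C\,d(x)$ shows that a weak subsolution satisfies $a+\sup_{\sigma\le p\cdot n}\min(H_\sharp,F_\sharp)(\sigma)\le 0$ and a weak supersolution satisfies $b+\inf_{\sigma\ge p\cdot n}\max(H_\sharp,F_\sharp)(\sigma)\ge 0$, and conversely. The fatal problem is in Step 2: the function you define, $\inf_{\sigma\ge p\cdot n}\max(H_\sharp,F_\sharp)(\sigma)$, is only the \emph{supersolution} half-relaxation (the paper's $\overline R F_0$). It always satisfies $\overline R F_0\ge F_0$, whereas the correct relaxed Hamiltonian must lie \emph{below} $F_0$ on the set $\{F_0>H\}$; the right object is $\mathfrak R F_0=\underline R(\overline R F_0)=\overline R(\underline R F_0)$, where $\underline R G(p)=\sup_{\sigma\le p\cdot n}\min(H_\sharp,G_\sharp)(\sigma)$ is the subsolution half-relaxation. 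These genuinely differ, and with your choice the implication ``weak $\Rightarrow$ strong'' fails for subsolutions. Concretely, take $d=1$, $\Omega=(0,+\infty)$ (so $n=-1$ and $p\cdot n=-p$), $H(p)=|p|$ and $F_0(p)=\max(1,1-9p)$, which is continuous, non-increasing in $p$ and semi-coercive. Then $u(t,x)=-x-t$ is a weak $F_0$-solution (check that $\min(|q|,F_0(q))\le 1$ for all $q\ge -1$ and $\max(|q|,F_0(q))\ge 1$ for all $q\le -1$), but $\varphi(t,x)=-x-t+(t-t_0)^2$ touches $u$ from above at $(t_0,0)$ with $\varphi_t+\overline RF_0(\varphi_x)=-1+\inf_{q\le-1}\max(1-9q,|q|)=-1+10=9>0$, so $u$ is not a strong subsolution for your boundary Hamiltonian. (It is a strong $\mathfrak R F_0$-solution, since $\mathfrak RF_0(-1)=\underline R F_0(-1)=1=H(-1)$.)

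Because the target of your Step 3 subsolution argument is false, no construction of profiles can close the gap you describe there. What the paper actually does is different in a crucial way: the subsolution property is first upgraded only to $\underline R F_0$ (your soft bound), and then the interior equation together with weak continuity/semi-coercivity is used, via a critical-normal-slope lemma, to reduce admissible test functions to those whose normal slope is a \emph{negative characteristic point} of $\underline R F_0$ (a point where $\underline R F_0=H$ and $H$ dips below this value on the inward side); at such points one proves $\mathfrak R F_0=\underline R F_0\le F_0$, so the already-established $\underline RF_0$-inequality is the desired $\mathfrak R F_0$-inequality. Symmetrically, the supersolution property must be upgraded from $\overline R F_0$ \emph{down} to the smaller function $\mathfrak R F_0$ --- so that half is not ``soft'' either --- using positive characteristic points. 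The common boundary function is thereby forced to be $\underline R(\overline RF_0)$, not $\overline R F_0$; your Godunov remark is in the right spirit, but Godunov's flux corresponds to this composed operator (Theorem \ref{th::r1}), not to the single infimum you wrote.
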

If $F_0$ is not semi-coercive, the result still holds true if $u$ satisfies a weak continuity assumption at the boundary
of $\partial \Omega$: for all $x \in \partial \Omega$ and $t >0$,
\[
u^*(t,x)=\limsup_{(s,y)\to (t,x), y \in \Omega} u(s,y),
\]
see Theorem~\ref{th:weak-strong} in Section~\ref{s3}.

The application mapping $F_0$ to $\mathfrak{R} F_0$ is referred to as the \emph{relaxation operator.}
Theorem~\ref{t:main} was proved by the second and the third authors \cite{zbMATH06713740,MR3690310} under
the additional assumption that the Hamiltonian $H$ is convex and $\Omega$ is a half-space. In this case, the relaxation operator takes a very simple form
since $\mathfrak R F_0$ is the maximum of a constant $A_0$ (depending on $H$ and $F_0$) and the the lower non-increasing envelope of $H$ given by the formula 
\( H_-(t,x,p):=\inf_{\rho\le 0} H(t,x,p-\rho n(x)).\) When Hamiltonians are coercive but are not convex, J. Guerand \cite{zbMATH06731793} identified the relaxation operator 
in the monodimensional setting. The formula she obtained for $\mathfrak R F_0$ is referred in this article as Guerand's operator and is denoted by $\mathfrak J F_0$;
it is given in Definition~\ref{defi:guerand-relaxed}.

\subsection{A new formula for the relaxation operator}

The first main result of this article is a new formula for the relaxation operator. We first present
it in the mono-dimensional setting for the sake of clarity. 

\subsubsection{The homogeneous mono-dimensional case}

To simplify the presentation, we assume here that $\Omega =(0,+\infty)$ and $H$ and $F_0$ don't depend on $(t,x)$. Let $u$ be a (continuous) weak viscosity solution to \eqref{eq:HJ-bc}. As explained above,
this means that either the equation or the boundary condition is satisfied in the sense of viscosity solutions; see  Definition~\ref{defi:weak-visc-sol} for a precise definition. 
Consequently, if $\varphi$ is a test function touching $u$ from above at $P_0=(t_0,0)$, 
then 
\[\varphi_t+H(\varphi_x)\le 0\quad \text{or}\quad \varphi_t + F_0(\varphi_x)\le 0 \quad \text{ at } P_0\]
or equivalently $\varphi_t+(F_0 \wedge H)(\varphi_x)\le 0$ at $P_0$ where $F_0 \wedge H$ denotes the minimum of $F_0$ and $H$.
Keeping in mind the discussion above about weak and strong viscosity solutions, we obtained a first boundary condition that
is satisfied in a strong sense.

We next derive a more precise one. For any $q\ge \varphi_x (P_0)=:p$, the test function
$\tilde \varphi(t,x)=\varphi(t,x)+ (q-p)\cdot x \ge u(t,x)$ also touches $u$ at $P_0$ from above.
In particular, we also have \(\varphi_t+(F_0 \wedge H)(q)\le 0\) at \(P_0.\)
We conclude that
\[\varphi_t+ \underline R F_0 (\varphi_x) \le 0 \quad \text{ at } P_0\] 
where the operator $\underline R$ is defined by
\begin{equation}\label{eq::r16}
\underline R F_0(p):=\sup_{q\ge p} \ (F_0 \wedge H) (q) .
\end{equation}

Similarly if a test function $\varphi$  touches  a weak solution $u$ of \eqref{eq:HJ-bc} from below at $P_0$, we get
\[\varphi_t+ \overline R F_0 (\varphi_x) \ge 0 \quad \text{ at } P_0\]
where the operator $\overline R$ is defined by
\begin{equation}\label{eq::r17}
\overline R F_0(p):=\inf_{q\le p}\ (F_0 \vee H)(q).
\end{equation}
We refer the reader to Figure \ref{F3} for a representation of the effects of $\underline R$ and $\overline R$ on $F_0$.

We next remark that $\overline R F_0 = \underline R F_0=F_0$ in $\{ F_0 = H\}$ (see Remark~\ref{rem:relax-prop-1} below). 
We  define the relaxation operator $\mathfrak R F_0$ as follows,
\begin{equation}\label{eq::r12}
  \mathfrak R F_0 =
  \begin{cases}
    \underline R F_0 & \text{ in } \{ F_0 \ge H \}, \\
    \overline R F_0 & \text{ in } \{ F_0 \le H\} .
  \end{cases}
\end{equation}
 We refer the reader to Figure \ref{F3} for a representation of the effects of $\mathfrak R$ on $F_0$.

 \begin{figure}[!ht]
\centering
\def\svgwidth{.3\textwidth}
\begingroup%
  \makeatletter%
  \providecommand\color[2][]{%
    \errmessage{(Inkscape) Color is used for the text in Inkscape, but the package 'color.sty' is not loaded}%
    \renewcommand\color[2][]{}%
  }%
  \providecommand\transparent[1]{%
    \errmessage{(Inkscape) Transparency is used (non-zero) for the text in Inkscape, but the package 'transparent.sty' is not loaded}%
    \renewcommand\transparent[1]{}%
  }%
  \providecommand\rotatebox[2]{#2}%
  \newcommand*\fsize{\dimexpr\f@size pt\relax}%
  \newcommand*\lineheight[1]{\fontsize{\fsize}{#1\fsize}\selectfont}%
  \ifx\svgwidth\undefined%
    \setlength{\unitlength}{595.27559055bp}%
    \ifx\svgscale\undefined%
      \relax%
    \else%
      \setlength{\unitlength}{\unitlength * \real{\svgscale}}%
    \fi%
  \else%
    \setlength{\unitlength}{\svgwidth}%
  \fi%
  \global\let\svgwidth\undefined%
  \global\let\svgscale\undefined%
  \makeatother%
  \begin{picture}(1,1.41428571)%
    \lineheight{1}%
    \setlength\tabcolsep{0pt}%
    \put(0,0){\includegraphics[width=\unitlength,page=1]{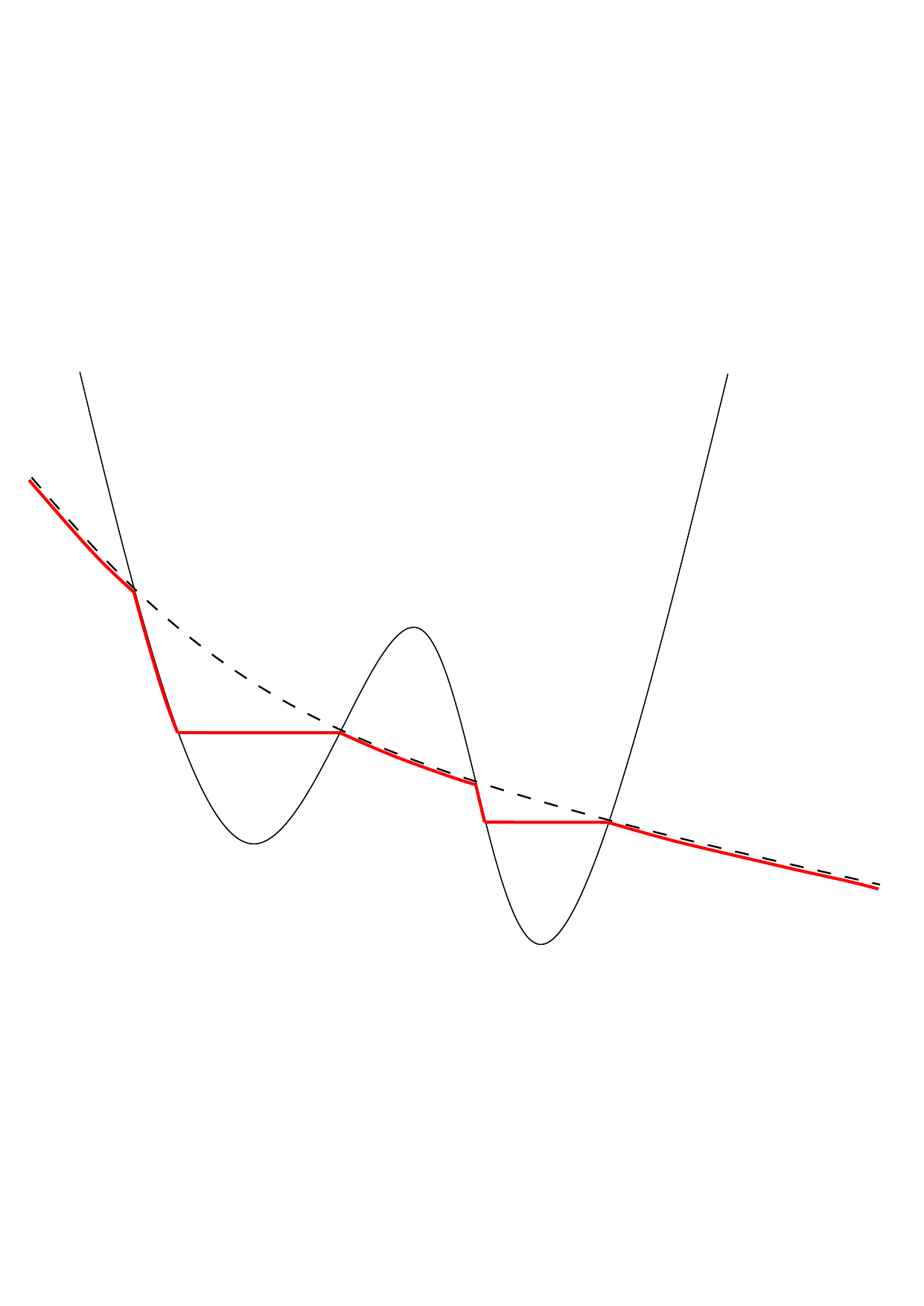}}%
    \put(0.67225763,0.95261478){\color[rgb]{0,0,0}\makebox(0,0)[lt]{\lineheight{1.25}\smash{\begin{tabular}[t]{l}$H$\end{tabular}}}}%
    \put(0.23938491,0.88691897){\color[rgb]{0,0,0}\makebox(0,0)[lt]{\lineheight{1.25}\smash{\begin{tabular}[t]{l}$F_0$\end{tabular}}}}%
    \put(-0.00809949,0.4432469){\color[rgb]{1,0,0}\makebox(0,0)[lt]{\lineheight{1.25}\smash{\begin{tabular}[t]{l}$\underline R F_0$\end{tabular}}}}%
    \put(0,0){\includegraphics[width=\unitlength,page=2]{underlineRF02.pdf}}%
  \end{picture}%
\endgroup%

\def\svgwidth{.3\textwidth}
\begingroup%
  \makeatletter%
  \providecommand\color[2][]{%
    \errmessage{(Inkscape) Color is used for the text in Inkscape, but the package 'color.sty' is not loaded}%
    \renewcommand\color[2][]{}%
  }%
  \providecommand\transparent[1]{%
    \errmessage{(Inkscape) Transparency is used (non-zero) for the text in Inkscape, but the package 'transparent.sty' is not loaded}%
    \renewcommand\transparent[1]{}%
  }%
  \providecommand\rotatebox[2]{#2}%
  \newcommand*\fsize{\dimexpr\f@size pt\relax}%
  \newcommand*\lineheight[1]{\fontsize{\fsize}{#1\fsize}\selectfont}%
  \ifx\svgwidth\undefined%
    \setlength{\unitlength}{595.27559055bp}%
    \ifx\svgscale\undefined%
      \relax%
    \else%
      \setlength{\unitlength}{\unitlength * \real{\svgscale}}%
    \fi%
  \else%
    \setlength{\unitlength}{\svgwidth}%
  \fi%
  \global\let\svgwidth\undefined%
  \global\let\svgscale\undefined%
  \makeatother%
  \begin{picture}(1,1.41428571)%
    \lineheight{1}%
    \setlength\tabcolsep{0pt}%
    \put(0,0){\includegraphics[width=\unitlength,page=1]{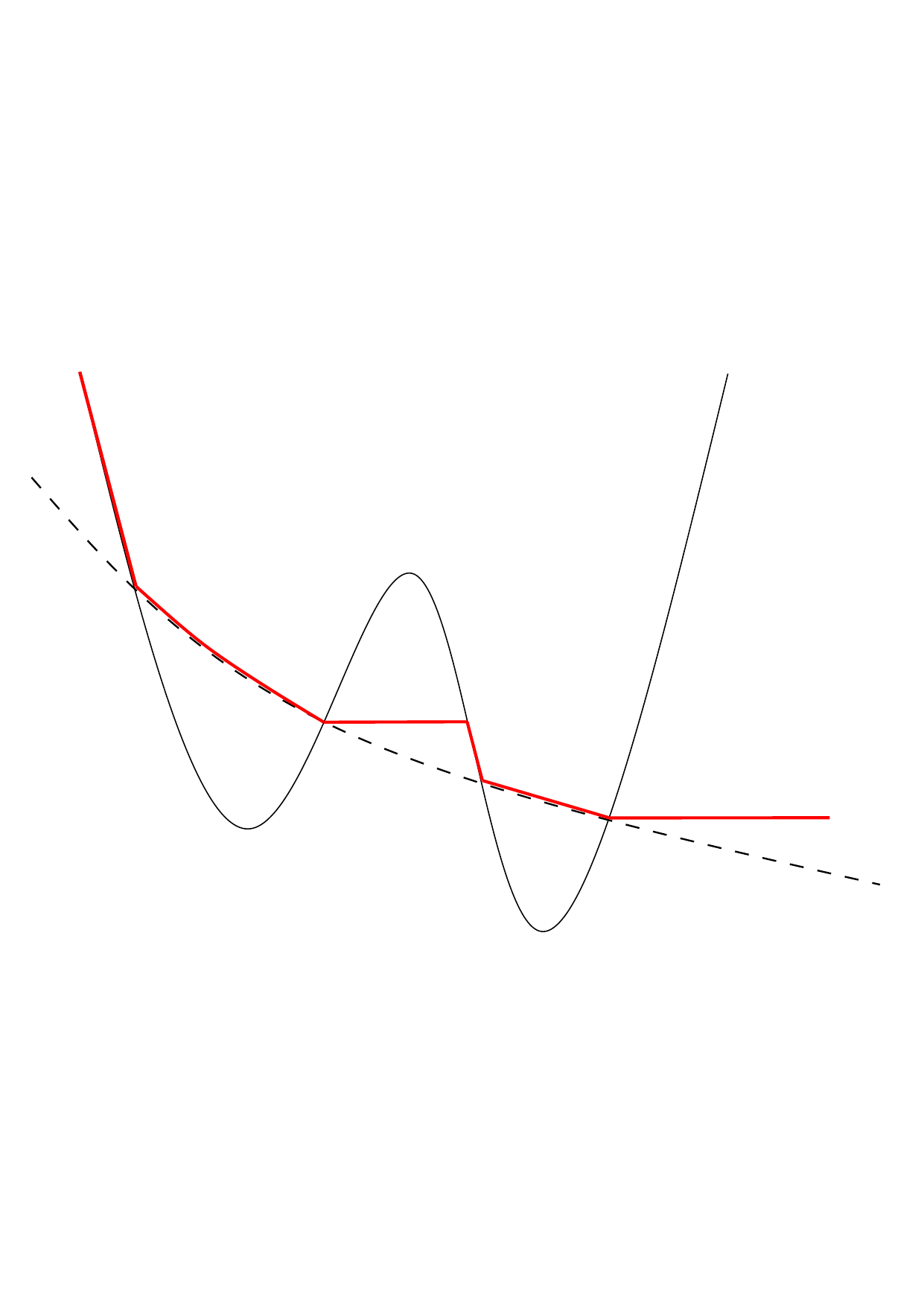}}%
    \put(0.69235535,1.00319973){\color[rgb]{0.16862745,0,0}\makebox(0,0)[lt]{\lineheight{1.25}\smash{\begin{tabular}[t]{l}$H$\end{tabular}}}}%
    \put(0.17436155,0.9153826){\color[rgb]{0.16862745,0,0}\makebox(0,0)[lt]{\lineheight{1.25}\smash{\begin{tabular}[t]{l}$F_0$\end{tabular}}}}%
    \put(0.22017919,0.33757135){\color[rgb]{1,0,0}\makebox(0,0)[lt]{\lineheight{1.25}\smash{\begin{tabular}[t]{l}$\underline R F_0$\end{tabular}}}}%
    \put(0,0){\includegraphics[width=\unitlength,page=2]{overlineRF02.pdf}}%
  \end{picture}%
\endgroup%

\def\svgwidth{.3\textwidth}
\begingroup%
  \makeatletter%
  \providecommand\color[2][]{%
    \errmessage{(Inkscape) Color is used for the text in Inkscape, but the package 'color.sty' is not loaded}%
    \renewcommand\color[2][]{}%
  }%
  \providecommand\transparent[1]{%
    \errmessage{(Inkscape) Transparency is used (non-zero) for the text in Inkscape, but the package 'transparent.sty' is not loaded}%
    \renewcommand\transparent[1]{}%
  }%
  \providecommand\rotatebox[2]{#2}%
  \newcommand*\fsize{\dimexpr\f@size pt\relax}%
  \newcommand*\lineheight[1]{\fontsize{\fsize}{#1\fsize}\selectfont}%
  \ifx\svgwidth\undefined%
    \setlength{\unitlength}{595.27559055bp}%
    \ifx\svgscale\undefined%
      \relax%
    \else%
      \setlength{\unitlength}{\unitlength * \real{\svgscale}}%
    \fi%
  \else%
    \setlength{\unitlength}{\svgwidth}%
  \fi%
  \global\let\svgwidth\undefined%
  \global\let\svgscale\undefined%
  \makeatother%
  \begin{picture}(1,1.41428571)%
    \lineheight{1}%
    \setlength\tabcolsep{0pt}%
    \put(0,0){\includegraphics[width=\unitlength,page=1]{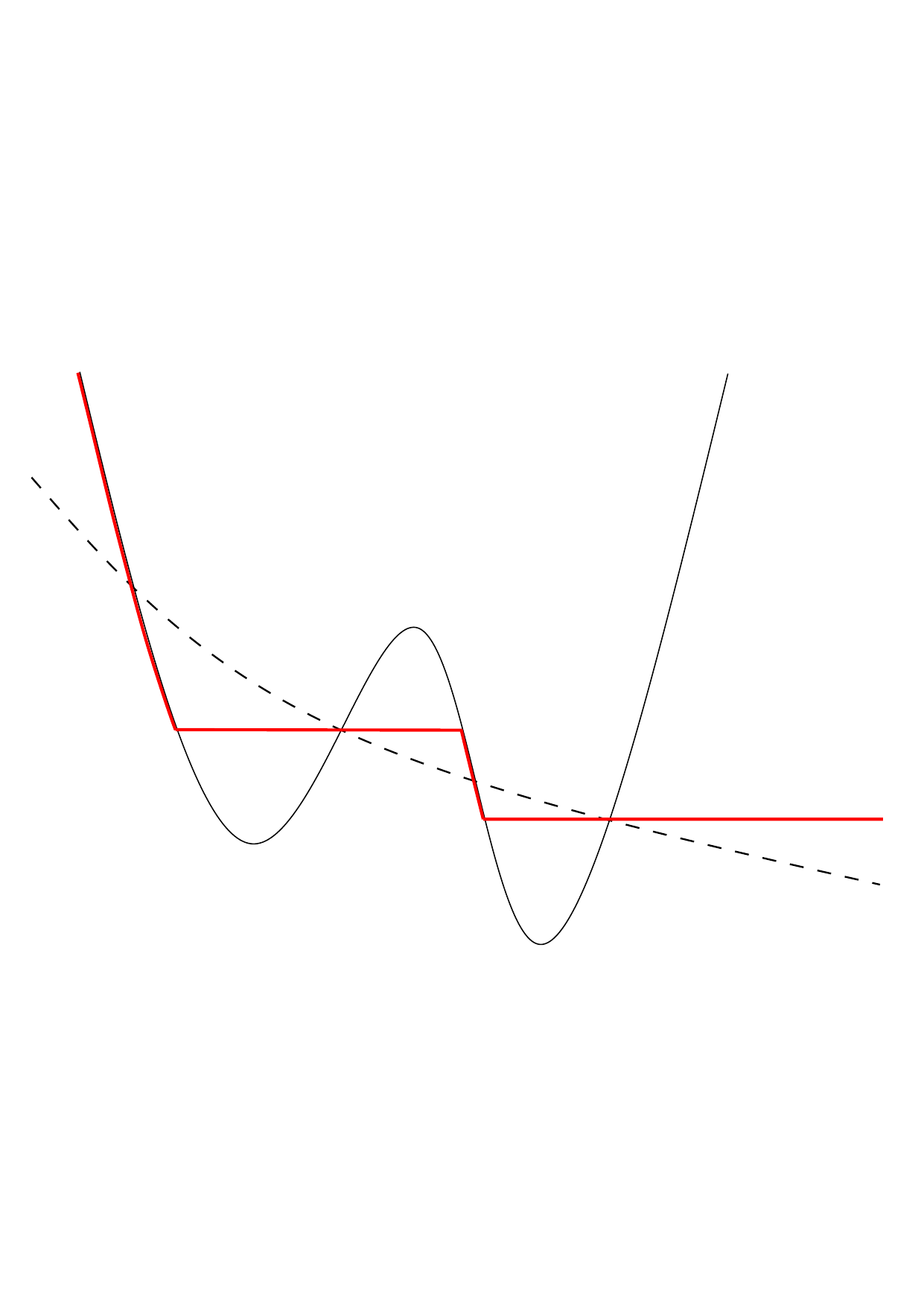}}%
    \put(0.66187824,0.98927249){\color[rgb]{0.16862745,0,0}\makebox(0,0)[lt]{\lineheight{1.25}\smash{\begin{tabular}[t]{l}$H$\end{tabular}}}}%
    \put(0.2335053,0.8800794){\color[rgb]{0.16862745,0,0}\makebox(0,0)[lt]{\lineheight{1.25}\smash{\begin{tabular}[t]{l}$F_0$\end{tabular}}}}%
    \put(0.32589947,0.41306879){\color[rgb]{1,0,0}\makebox(0,0)[lt]{\lineheight{1.25}\smash{\begin{tabular}[t]{l}$\mathfrak R F_0$\end{tabular}}}}%
    \put(0,0){\includegraphics[width=\unitlength,page=2]{RF0.pdf}}%
  \end{picture}%
\endgroup%

   \caption{Effects of $\underline R$, $\overline R$ and $\mathfrak R$ on $F_0$.
  The Hamiltonian $H$ is represented with a plain line, while a dashed line is used for the function $F_0$.
  The relaxation operators appear in red. We see that $\underline RF_0 \le F_0$ while $\overline RF_0 \ge F_0$.
  We can also observe that $\mathfrak R F_0 = \underline RF_0$ in $\{ F_0 \ge H \}$ and $\mathfrak R F_0 = \overline RF_0$ in $\{ F_0 \le H \}$.
  \label{F3}}
\end{figure}

\begin{ex}
In the totally degenerate case, \textit{i.e.} in the case where $F_0$ is constant,  the relaxed boundary function $\mathfrak R F_0$ is given by,
\[\mathfrak R F_0=\max(A,H_-)\quad \text{when}\quad F_0\equiv  const =A\quad \text{with}\quad H_-(p):=\inf_{(-\infty,p]} H.\]
This computation is used in the derivation of the relaxed Dirichlet condition, see the proof of Theorem~\ref{t:dirichlet}.
\end{ex}

The first main result of this work states that Guerand's relaxation operator
coincides with the one defined by \eqref{eq::r12}.
\begin{theo}[Guerand's operator and the relaxation operator coincide]\label{th::GR}
Assume  $H,F_0 \colon \R \to \R$ are continuous, $H$ is coercive and  $F_0$ is non-increasing and semi-coercive (in the sense of \eqref{eq:semi-coercive}). Then we have
\(\mathfrak R F_0=\mathfrak J F_0.\)
\end{theo}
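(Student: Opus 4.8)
Throughout we use the convention $\Omega=(0,+\infty)$ fixed in the subsection, so that the outward normal is $n=-1$ and the hypotheses read: $H$ continuous and coercive, $F_0$ continuous, non-increasing on $\R$, with $F_0(p)\to+\infty$ as $p\to-\infty$. We must prove the pointwise identity $\mathfrak R F_0(p)=\mathfrak J F_0(p)$ for every $p\in\R$, where $\mathfrak R F_0$ is given by \eqref{eq::r12} and $\mathfrak J F_0$ by Definition~\ref{defi:guerand-relaxed}. The plan is a region-by-region comparison of the two formulas, the regions being $\{F_0\ge H\}$, $\{F_0\le H\}$ and their overlap $\{F_0=H\}$.

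First I would record the elementary structure forced by the monotonicity of $F_0$. Using that $F_0$ is non-increasing one gets, for every $p$, $(F_0\wedge H)(q)\le F_0(q)\le F_0(p)$ for $q\ge p$ and $(F_0\vee H)(q)\ge F_0(q)\ge F_0(p)$ for $q\le p$, hence the global bounds $\underline R F_0\le F_0\le\overline R F_0$ (consistent with Figure~\ref{F3}); moreover $(F_0\wedge H)(p)=H(p)$ when $p\in\{F_0\ge H\}$ and $(F_0\vee H)(p)=H(p)$ when $p\in\{F_0\le H\}$, which upgrades this to
\begin{equation*}
 H(p)\le \mathfrak R F_0(p)=\underline R F_0(p)\le F_0(p)\ \text{ on }\{F_0\ge H\},\qquad
 F_0(p)\le \mathfrak R F_0(p)=\overline R F_0(p)\le H(p)\ \text{ on }\{F_0\le H\}.
\end{equation*}
On the overlap $\{F_0=H\}$ all three coincide with $F_0$ by Remark~\ref{rem:relax-prop-1}, so \eqref{eq::r12} is unambiguous and it remains to match $\mathfrak R F_0$ with $\mathfrak J F_0$ on each of the two half-regions. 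Since $F_0\le H$ near $+\infty$ (bounded $F_0$ versus coercive $H$) and $F_0\ge H$ possibly near $-\infty$, and since $F_0+H\to+\infty$ at $-\infty$, the suprema and infima in \eqref{eq::r16}–\eqref{eq::r17} are finite, and the infimum in $\overline R F_0$ is attained on a compact interval; the supremum in $\underline R F_0$ is attained too, except in the degenerate case where its value is $\lim_{q\to+\infty}F_0(q)$, which is treated separately and for which Guerand's formula returns the same limit.

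Next I would perform the matching. Fix $p\in\{F_0\ge H\}$ and an optimiser $q^\star\ge p$ for $\underline R F_0(p)=\sup_{q\ge p}(F_0\wedge H)(q)$. Since $F_0\wedge H$ equals $H$ on $\{F_0\ge H\}$ and equals the non-increasing function $F_0$ on $\{F_0\le H\}$, $q^\star$ is necessarily either a local maximiser of $H$ lying in $\{F_0\ge H\}$ and to the right of $p$, or the left endpoint of a connected component of $\{F_0\le H\}$ (hence a crossing point of $\{F_0=H\}$); in both cases $(F_0\wedge H)(q^\star)=H(q^\star)$. These are precisely the points entering Guerand's definition of $\mathfrak J F_0(p)$ (her ``limiter points''), and I would conclude by a double inequality: every admissible point of Guerand's construction provides a competitor $q\ge p$ in \eqref{eq::r16}, giving $\mathfrak J F_0(p)\le\underline R F_0(p)$, while $q^\star$ is admissible in her construction with value $H(q^\star)=\underline R F_0(p)$, giving the reverse. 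The region $\{F_0\le H\}$ is handled symmetrically, with $\overline R F_0$, infima and $q\le p$ in place of $\underline R F_0$, suprema and $q\ge p$. Combined with the agreement on $\{F_0=H\}$ this yields $\mathfrak R F_0=\mathfrak J F_0$ on all of $\R$.

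I expect the genuine difficulty to be the combinatorial bookkeeping in this last step when $H$ is strongly non-convex: $\{F_0=H\}$ may have many components and $H$ many interior oscillations, so that the family of limiter points is genuinely rich and one must check carefully that the extremiser $q^\star$ in \eqref{eq::r16}–\eqref{eq::r17} is always one of them, with the correct value, and conversely. A cleaner way to organise this — and a robust alternative should the direct comparison become unwieldy — is to note that both $\mathfrak R F_0$ and $\mathfrak J F_0$ are characterised by the same extremal property, namely they are the unique continuous semi-coercive boundary Hamiltonian squeezed between $H$ and $F_0$ as above and realising the Godunov-type min–max flux associated with $H$ and $F_0$ mentioned in the introduction; uniqueness of that object then forces equality. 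Yet another route is to combine the weak–strong equivalence of Theorem~\ref{t:main} with Guerand's original weak–strong equivalence to deduce that $(H,\mathfrak R F_0)$ and $(H,\mathfrak J F_0)$ define the same strong solutions, and then to invoke an identifiability lemma: a relaxed (non-increasing, fixed under the relaxation) boundary Hamiltonian is determined by its family of strong solutions.
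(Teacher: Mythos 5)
Your sandwich inequalities and the general plan (analyse the extremisers of \eqref{eq::r16}--\eqref{eq::r17} and relate them to Guerand's limiter points) are in the right spirit, but the proposal has a genuine gap exactly at the crux. The sentence ``these are precisely the points entering Guerand's definition of $\mathfrak J F_0(p)$'' is asserted, not proved, and it is the whole content of the theorem. A limiter point in Definition~\ref{defi::3} is not simply ``a local maximiser of $H$ in $\{F_0\ge H\}$ or a crossing point of $\{F_0=H\}$'': it must also satisfy the non-overlap implication $H(p)>H(q)\ge F_0(q)\Rightarrow (q^-,q^+)\cap(p,p^+)=\emptyset$ (and its mirror), a global condition phrased in terms of the upper/lower points $p^\pm$ that your argument never engages. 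Verifying that the minimal optimiser $q^\star$ of $\sup_{q\ge p}(F_0\wedge H)(q)$ satisfies this condition, and conversely that every limiter point arises this way, is precisely Proposition~\ref{p:lim-char} of the paper ($\chi^\pm(\mathfrak R F_0)=A^\pm_{F_0}$), whose proof is a delicate contradiction argument ordering the points $q^-\le q<p^-\le q^+<p$; nothing in your sketch substitutes for it.

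A second structural problem: your ``double inequality'' framing treats $\mathfrak J F_0(p)$ as if it were an extremum over admissible limiter points, but Definition~\ref{defi:guerand-relaxed} is a piecewise prescription --- $\mathfrak J F_0=H(p_\alpha)$ on each interval $[p_\alpha^-,p_\alpha^+]$ and $\mathfrak J F_0=H$ elsewhere. So even granting the identification of extremisers with limiter points, you still owe two statements your region decomposition by the sign of $F_0-H$ does not deliver: (a) $\mathfrak R F_0$ is \emph{constant equal to} $H(p_\alpha)$ on the whole interval $[p_\alpha^-,p_\alpha^+]$ (the paper gets this from Corollary~\ref{cor::11}), and (b) $\mathfrak R F_0=H$ off the union of these intervals, i.e.\ $\{\mathfrak R F_0\neq H\}\subset\bigcup_\alpha[p_\alpha^-,p_\alpha^+]$, which the paper proves via the local-constancy result Corollary~\ref{cor::14} and a maximal-interval argument. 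Your two fallback routes do not repair this: the ``unique Godunov-type min--max flux'' characterisation is nowhere established (it is Theorem~\ref{th::r1}, proved \emph{after} and independently of this identity), and the ``identifiability from the family of strong solutions'' lemma is unproven and would be at least as hard as the statement itself.
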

\begin{rem}
The definition of Guerand's operator $\mathfrak J$ is recalled in Section~\ref{s4}, see Definition~\ref{defi:guerand-relaxed}.
\end{rem}

\subsubsection{The heterogeneous multidimensional setting}

If dimension is larger than or equal to $2$, then the relaxation operator can be defined by freezing tangential variables.
More precisely, if $x \in \partial \Omega$ and $n$ denotes the outward unit normal, then
$p \in \R^d$ is split into
\( p = p' - r n\)
for $p' \perp n$ and $r \in \R$. Then
\[ \bar H (r) = H (t,x,p'- r n) \quad \text{ and } \quad \bar F_0 (r) = F_0 (t,x,p' - r n).\]
We then define
\( \mathfrak{R} F_0 (t,x,p',r) = \mathfrak{R} \bar{F}_0 (r)\)
where the relaxation operator in the right hand side is computed with respect to the coercive Hamiltonian $\bar H$ and defined in \eqref{eq::r12}. 

We remark that the multi-dimensional relaxation operators can be written as,
\begin{equation}
  \label{defi:relax-op}
  \begin{cases}
  \underline{R} F_0 (t,x,p) &= \sup_{\rho \ge 0} \ (F_0 \wedge H)(t,x,p - \rho n ), \\
  \overline{R} F_0 (t,x,p) &= \inf_{\rho \le 0}\ (F_0 \vee H)(t,x,p- \rho n). 
\end{cases}
\end{equation}

\subsection{The Neumann and Dirichlet problems}
\label{subsec:neumann}

We now turn to the study of weak viscosity solutions of the Neumann problem. 
\begin{theo}[From Neumann to Godunov]\label{t:neumann}
  Any function $u \colon (0,T) \times \Omega \to \R$
  is a weak solution of Neumann problem \eqref{eq:HJ-neu}  if and only if  it is a strong solution  of
\[
\begin{cases}
u_t + H(t,x, Du)=0, & t \in (0,T),  x \in \Omega,\\
u_t +N(t,x,Du)=0, & t \in (0,T), x \in \partial \Omega
\end{cases}
\]
where $N$ is the classical Godunov flux associated to the Hamiltonian $\rho \mapsto H(t,x,p - \rho n)$,
\[
  N(t,x,p) =
  \begin{cases}
    \max \bigg\{ H(t,x,p- \rho n) : \rho \in [0,p \cdot n(x) + h(t,x)] \bigg \} & \text{ if } p \cdot n(x) + h(t,x) \ge 0, \\
    \min \bigg\{ H(t,x,p- \rho n) : \rho \in [p \cdot n (x) + h(t,x),0] \bigg\} & \text{ if } p \cdot n(x) + h(t,x) \le 0.
  \end{cases}
\]
\end{theo}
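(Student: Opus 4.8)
The strategy is to reduce the Neumann problem to the dynamic boundary condition~\eqref{eq:HJ-bc} and then apply Theorem~\ref{t:main} (or rather its non-semi-coercive extension, Theorem~\ref{th:weak-strong}, since the Neumann datum produces an $F_0$ that is only semi-coercive in one direction, not fully coercive). The first step is to observe that the inhomogeneous Neumann condition $\frac{\partial u}{\partial n}+h(t,x)=0$, i.e. $Du\cdot n(x) + h(t,x)=0$, can be rewritten as $u_t + F_0(t,x,Du)=0$ on $\partial\Omega$ where one takes $F_0(t,x,p) := u_t$... more precisely, since the Neumann condition does not involve $u_t$, one should instead phrase the boundary condition in the form $F_0(t,x,p) = -C(p\cdot n(x)+h(t,x))$ for any fixed increasing function, or better, treat the Neumann condition directly as the degenerate dynamic condition obtained by the formal choice in which the boundary relation forces $p\cdot n$. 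The clean route, which I would follow, is: a weak solution of \eqref{eq:HJ-neu} is precisely a weak solution of \eqref{eq:HJ-bc} with the boundary Hamiltonian $F_0$ chosen so that $\{F_0 \le H\}$ and $\{F_0 \ge H\}$ encode the sign of $p\cdot n + h$; concretely one verifies at the level of test functions that the Neumann subsolution (resp. supersolution) inequality at a boundary point is equivalent to the statement that either $\varphi_t + H(t,x,D\varphi)\le 0$ or $D\varphi\cdot n + h \le 0$ (resp. the reversed inequalities), which is exactly the weak formulation of \eqref{eq:HJ-bc} associated to a suitable $F_0$.

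The second step is the key computation: showing that for this choice of $F_0$ the relaxation operator $\mathfrak R F_0$ of~\eqref{eq::r12} equals the Godunov flux $N$. Freezing the tangential variable as in Section~1.2.2, write $p = p' - rn$ with $r = -p\cdot n$... (sign conventions to be checked against the paper's split $p = p'-rn$, so $r\cdot n$-component is $-p\cdot n$), so that $\bar H(r) = H(t,x,p'-rn)$ and the Neumann condition reads $\bar F_0$ "lives on" the half-line determined by $r \ge -h$ versus $r\le -h$. Using the one-dimensional formulas $\underline R F_0(r) = \sup_{q\ge r}(F_0\wedge H)(q)$ and $\overline R F_0(r)=\inf_{q\le r}(F_0\vee H)(q)$ together with the splitting~\eqref{eq::r12} over $\{F_0\ge H\}$ and $\{F_0\le H\}$, one checks that on the set where $p\cdot n + h \ge 0$ the relaxation reduces to a supremum of $\bar H$ over an interval of $\rho$'s running from $0$ to $p\cdot n + h$, and on the complementary set to an infimum over the reversed interval — which is verbatim the definition of $N$. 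This amounts to taking the limit of $F_0$ as its "slope parameter" degenerates: the totally degenerate example ($F_0 \equiv A$, giving $\mathfrak R F_0 = \max(A,H_-)$) in the excerpt is the prototype of exactly this computation, and the Godunov formula is its inhomogeneous refinement with the endpoint $p\cdot n + h$ replacing $-\infty$.

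The third step is to assemble: by Theorem~\ref{th:weak-strong}, $u$ is a weak solution of \eqref{eq:HJ-bc} (with the $F_0$ from Step~1) if and only if it is a strong solution of the system with $u_t + \mathfrak R F_0 = 0$ on $\partial\Omega$, and by Step~2 this is $u_t + N = 0$; combined with Step~1 this proves the theorem. I expect the main obstacle to be Step~2 — verifying the identity $\mathfrak R F_0 = N$ requires a careful case analysis according to the relative position of $p\cdot n + h$ with respect to the points where $\bar F_0 = \bar H$, and one must handle correctly the fact that $F_0$ here is not semi-coercive (so the weak continuity hypothesis on $u^*$ at $\partial\Omega$ is genuinely needed, and one should confirm it is automatically satisfied, or harmless, for the Neumann problem). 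A secondary technical point is checking the measurability/continuity of $N$ in $(t,x,p)$ and its semi-coercivity, so that the strong formulation is well posed; these follow from the continuity and coercivity of $H$ assumed in~\eqref{a:HandF0} together with continuity of $h$.
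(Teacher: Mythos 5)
Your plan founders at Step~1, and the difficulty you half-acknowledge there is not a presentational issue but a genuine obstruction. The Neumann boundary operator $L(t,x,v,p_0,p)=p\cdot n(x)+h(t,x)$ contains no $p_0=u_t$, so the weak subsolution alternative at the boundary reads
\[
\min\bigl(\varphi_t+H(t,x,D\varphi),\ D\varphi\cdot n(x)+h(t,x)\bigr)\le 0,
\]
whereas for any dynamic condition $u_t+F_0(t,x,Du)=0$ it reads $\varphi_t+(F_0\wedge H)(t,x,D\varphi)\le 0$. These cannot coincide for any fixed continuous $F_0$: when $D\varphi\cdot n+h<0$ the first inequality holds no matter how large $\varphi_t$ is, while the second fails for $\varphi_t$ large. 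So there is no ``suitable $F_0$'' for which weak Neumann solutions are weak solutions of \eqref{eq:HJ-bc}, and Theorem~\ref{th:weak-strong} cannot be invoked off the shelf. (The analogous trick does work for Dirichlet because there $F_0$ can be taken \emph{constant} in $p$, equal to $u^*-g-\lambda$ with $\lambda=\varphi_t(P_0)$ frozen test function by test function; for Neumann the $p$-dependence and the frozen $\lambda$ do not decouple.) Your Step~2 then computes the relaxation of an object that was never legitimately defined. A secondary slip: the Neumann $L$ \emph{is} semi-coercive in the sense of Lemma~\ref{lem:weak-cont} ($p\cdot n+h\to+\infty$ as $p\cdot n\to+\infty$), so weak continuity of $u^*$ at the boundary is automatic, not an extra hypothesis to worry about.

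What the paper does instead is bypass the reduction to \eqref{eq:HJ-bc} and run the relaxation machinery directly on the Neumann weak formulation. First it checks that $N$, which in the normal variable is the Godunov flux $N_0(\rho)=G(\rho_0,\rho)$ with $\rho_0=h(t,x)$, is \emph{self-relaxed} ($\underline R N_0=\overline R N_0=\mathfrak R N_0=N_0$, using $(H_0-N_0)(\rho-\rho_0)\ge 0$) and that its negative characteristic points lie in $(-\infty,\rho_0]$. Then, since $u$ is a subsolution of the interior equation and weakly continuous (Lemma~\ref{lem:weak-cont} applied to the Neumann $L$), Proposition~\ref{pro:reduced} reduces the verification of the strong $N$-subsolution property to test functions whose normal slope $\rho$ is a negative characteristic point of $N_0$; for those $\rho\le\rho_0$, the Neumann alternative forces the interior inequality $\phi_t+H\le 0$, and perturbing by $R(x_d-\gamma(x'))$ for $R\in[0,\rho_0-\rho]$ (which preserves both the touching and the failure of the Neumann alternative) produces exactly $\phi_t+\max_{R\in[0,\rho_0-\rho]}H(\cdot,D\phi-Rn)\le 0$, i.e.\ $\phi_t+N\le 0$. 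The converse direction is a one-line verification from $N_0\ge H_0$ on $\{\rho\le\rho_0\}$. If you want to keep your architecture, you would have to either prove directly that weak Neumann solutions coincide with weak $N$-solutions (which is essentially the theorem itself), or set up a version of Theorem~\ref{th:weak-strong} for general boundary operators $L$ as in \eqref{a:L} together with a notion of relaxation of $L$ — neither of which your proposal supplies.
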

We remark that in dimension $1$ (taking $\Omega=(0,+\infty)$ to simplify), Theorem~\ref{t:neumann} can be expressed in terms of Godunov's flux.
Indeed, when $H$ and $h$ do not depend on $(t,x)$, we get $N(p) = G(h,p)$, where $G$ is the classical Godunov's flux defined later in \eqref{eq::r11}, and the weak Neumann boundary condition
is relaxed in $u_t + G(h,u_x)=0$. 
This formulation seems very natural; indeed, at the level of  the conservation law, it is expected that the spatial derivative $v:=u_x$  (at least formally) is an entropy solution of
$$\left\{\begin{array}{ll}
v_t + H(v)_x=0\quad , & \mbox{for}\quad x>0,\\
v(t,0)\in \mathcal G_{h} , & \mbox{for a.e.}\quad t\in (0,+\infty).
\end{array}\right.$$
where the set $\mathcal G_{h}$ is given by\footnotemark[1]
\footnotetext[1]{Notice that it is possible to show (similarly to the proof of Lemma \ref{lem::r3} below) that $u(t,x)=px+ \lambda t$ is a weak Neumann solution to (\ref{eq:HJ-neu}), if and only if $\lambda=-H(p)$ and
$$(p-h)\cdot \left\{G(h,p)-H(p)\right\}\ge 0$$
which is easily seen to be equivalent to $p\in \mathcal G_{h}$.
}

$$\mathcal G_{h}=\left\{p\in\R,\quad H(p)=G(h,p)\right\}.$$
It is easy to check that we have
$$\mathcal G_{h}=\left\{p\in \R,\quad \left\{\mbox{sign}(p-k)-\mbox{sign}(h-k)\right\}\cdot \left\{H(p)-H(k)\right\}\le 0 \quad \mbox{for all $k\in \R$}\right\}$$
which is nothing else that the well-known Bardos-Leroux-Nedelec (BLN) condition. This (BLN) condition that has been identified in \cite{BLN}, as the natural effective condition
associated to the desired Dirichlet condition for scalar conservation laws, in the vanishing viscosity limit.

In our weak/strong terminology, this shows in this example, that (BLN) condition is a strong boundary condition associated to the weak  Dirichlet boundary condition $v(t,0)=h$. Here the Dirichlet condition  can not always be satisfied strongly.
In other words, in this example, we see that relaxation of the boundary condition at the Hamilton-Jacobi level, selects the right choice of the effective boundary condition that is indeed satisfied strongly by a solution.

We refer the reader to Subsection \ref{ss.4}, for a further discussion on the relation between Hamilton-Jacobi equations with boundary conditions and scalar conservation laws with (Dirichlet type) boundary conditions.
\medskip

Notice that the Neumann problem has been adressed independently by P.-L. Lions and P. Souganidis \cite{zbMATH06821670} in the monodimensional case and the second author
with V. D. Nguyen \cite{MR3709301} in the case where the Hamiltonian is convex and the domain $\Omega$ is a half-space. In both works, the geometric setting corresponds to junctions
and the junction conditions of Kirchoff type can be handled. These conditions generalize the Neumann boundary condition to the junction setting. 
\bigskip

As far as the Dirichlet problem is concerned, the relaxed boundary
condition turns out to be an obstacle problem. 
\begin{theo}[Dirichlet to boundary obstacle problem]\label{t:dirichlet}
Consider a function $u \colon (0,T) \times \Omega \to \R$ which is
 weakly continuous at $(t,x)$ for all $t>0$ and $x \in \partial \Omega$, \textit{i.e.}
   \[u^*(t,x)=\limsup_{(s,y)\to (t,x),  y \in \Omega} u(s,y),\]
Then $u$ is a weak solution of Dirichlet problem \eqref{eq:HJ-dir}
if and only if  it is a strong solution  of
\[
\begin{cases}
u_t + H(t,x, Du)=0, & t \in (0,T),  x \in \Omega,\\
  \max \{ u-g, u_t + H_-(t,x,Du)\} =0,
                    & t \in (0,T), x \in \partial \Omega
\end{cases}
\]
where $n\colon \partial \Omega \to \R^d$ is the outward unit normal vector field and
\[
  H_- (t,x,p) = \inf_{\rho \le 0} H (t,x,p-\rho n(x)).
\]
\end{theo}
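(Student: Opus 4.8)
The plan is to reduce the Dirichlet problem to the dynamic boundary condition \eqref{eq:HJ-bc} by choosing an appropriate (possibly non-semi-coercive) boundary function $F_0$, and then to apply Theorem~\ref{t:main} (more precisely its non-semi-coercive variant, Theorem~\ref{th:weak-strong} in Section~\ref{s3}, which requires exactly the weak continuity hypothesis assumed here). The key observation is that the Dirichlet condition $u=g$ on $\partial\Omega$, understood in the sense of viscosity solutions for an evolution equation, is equivalent to the weak dynamic boundary condition associated with the \emph{constant} boundary Hamiltonian $F_0(t,x,p):=\partial_t g(t,x)$ when $g$ is smooth, or, for the general continuous $g$, to working with the doubled function $u-g$ and a constant $F_0\equiv A$; it is technically cleaner to first treat the case where $g$ does not depend on $t$ (so that $F_0$ can be taken as any constant, say $A=0$) and then reduce to this case by the change of unknown $v:=u-g$, absorbing $\partial_t g$ into the Hamiltonian. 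So step one is this reduction: show that a weakly continuous function $u$ is a weak viscosity solution of \eqref{eq:HJ-dir} if and only if it is a weak viscosity solution of \eqref{eq:HJ-bc} with the totally degenerate choice $F_0\equiv A$ (after the $v=u-g$ normalization). The point is that, on the supersolution side, the ``either/or'' in the weak formulation combined with a constant $F_0$ forces $u\ge g$ at any boundary minimum point via the degenerate boundary inequality $\varphi_t+A\ge 0$ being impossible to conclude — one instead uses that a weak supersolution never ``loses'' a strict subsolution test, hence $u^*\ge g$; and on the subsolution side the constant $F_0$ imposes no real constraint at the boundary, matching the obstacle inequality $\min$-part being vacuous from above.

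Step two is to compute the relaxation operator $\mathfrak R F_0$ for the constant boundary function. This is exactly the content of the Example following \eqref{eq::r12}: when $F_0\equiv A$, freezing tangential variables and writing $p=p'-rn$, one has $\bar F_0\equiv A$ and
\[
\mathfrak R\bar F_0(r)=\max\bigl(A,\ \bar H_-(r)\bigr),\qquad \bar H_-(r)=\inf_{\rho\le 0}\bar H(r-\rho)=\inf_{\rho\le 0}H(t,x,p-\rho n),
\]
so that $\mathfrak R F_0(t,x,p)=\max\bigl(A,\ H_-(t,x,p)\bigr)$ with $H_-$ as in the statement. This uses only that $\underline R$ of a constant on $\{F_0\ge H\}$ leaves it constant where $A\ge H_-$, while $\overline R$ of a constant on $\{F_0\le H\}$ coincides with $H_-$ there (because $\overline R A(p)=\inf_{\rho\le 0}(A\vee H)(p-\rho n)=A\vee H_-(p)$ when $A\le H$ in the relevant range); a short case analysis matching the two pieces of \eqref{eq::r12} gives the clean formula $\max(A,H_-)$. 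Plugging $A=0$ (after the $v=u-g$ reduction, or more honestly carrying $A=\partial_t g$ along and undoing the normalization) then turns the relaxed dynamic boundary condition $u_t+\mathfrak R F_0(t,x,Du)=0$ into
\[
u_t+\max\bigl(\partial_t g,\ H_-(t,x,Du)\bigr)=0\quad\text{on }\partial\Omega,
\]
which, upon rewriting $u_t+\partial_t g$ as the time-derivative of $u-g$ and recalling the obstacle encodes $u-g=0$ when the $H_-$-inequality is not active, is precisely $\max\{u-g,\ u_t+H_-(t,x,Du)\}=0$.

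Step three is to verify that the equivalence passes through Theorem~\ref{th:weak-strong}: since $F_0\equiv A$ is continuous and $p\mapsto A$ is trivially non-decreasing in $p\cdot n$, and $H$ is coercive, the hypotheses of the non-semi-coercive version of the weak-to-strong theorem are met as soon as $u$ is weakly continuous at every boundary point — which is exactly what is assumed. One then chains: $u$ weak solution of \eqref{eq:HJ-dir} $\iff$ $u$ weak solution of \eqref{eq:HJ-bc} with $F_0\equiv A$ (step one) $\iff$ $u$ strong solution of the relaxed system with $\mathfrak R F_0=\max(A,H_-)$ (Theorem~\ref{th:weak-strong}) $\iff$ $u$ strong solution of the obstacle problem (step two). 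The main obstacle I anticipate is step one, the careful equivalence between the viscosity-Dirichlet condition and the degenerate dynamic condition: one must handle the subsolution and supersolution cases separately, be attentive to the difference between $u$ and its upper/lower semicontinuous envelopes at the boundary (the role of the weak continuity assumption is precisely to control $u^*$ there), and check that the boundary obstacle inequality in the conclusion correctly encodes both ``$u\le g$ is never required'' (subsolution side) and ``$u\ge g$ or the interior equation holds'' collapsing to ``$\max\{u-g,\ u_t+H_-\}=0$'' once relaxed; the time-dependence of $g$ adds the bookkeeping of the $\partial_t g$ term, which is why the change of unknown $v=u-g$ is the right device. The computation in step two is routine given the Example, and step three is a direct invocation of an earlier result.
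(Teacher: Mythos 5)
Your step two (the computation $\mathfrak R F_0=\max(A,H_-)$ for a constant $F_0\equiv A$) and your step three (invoking the non-semi-coercive version of the weak-to-strong theorem under the weak continuity hypothesis) are both correct and are exactly what the paper does. The gap is in step one. The Dirichlet condition $u=g$ is a zeroth-order condition on the value of $u$, while a dynamic boundary condition $u_t+F_0(Du)=0$ with a \emph{fixed} constant $F_0\equiv A$ is a first-order condition; they are not equivalent for any fixed choice of $A$, whether $A=0$ after the change of unknown $v=u-g$ or $A=\partial_t g$. Concretely, with $F_0\equiv\partial_t g$ the relaxed boundary condition reads $\max\{\partial_t(u-g),\,u_t+H_-(t,x,Du)\}=0$, and your final ``rewriting'' that turns $\partial_t(u-g)$ into $u-g$ has no justification: a derivative cannot be traded for a function value. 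This is precisely where the zeroth-order information $u-g$ is lost in your reduction, and it is the heart of the theorem.

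The paper's device is different and test-function-dependent: given $\phi$ touching $u^*$ from above at $P_0=(t_0,x_0)\in\partial\Omega$ with $\lambda:=\phi_t(P_0)$, the Dirichlet viscosity inequality $\min(u^*-g,\ \phi_t+H(t,x,D\phi))\le 0$ at $P_0$ is rewritten as $\phi_t+\min(A_0,\ H(t,x,D\phi))\le 0$ with the constant $A_0:=u^*(P_0)-g(P_0)-\lambda$, so that $\phi_t+A_0=u^*(P_0)-g(P_0)$ exactly. The relaxation machinery (adding nonnegative multiples of $x_d-\gamma(x')$ to the test function, critical normal slopes) does not change $\phi_t(P_0)$ nor $u^*(P_0)-g(P_0)$, so $A_0$ is frozen throughout, and the relaxed inequality $\phi_t+\max(A_0,H_-(t,x,D\phi))\le 0$ is literally $\max\{u^*-g,\ \phi_t+H_-(t,x,D\phi)\}\le 0$. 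Your computation of $\mathfrak R$ of a constant is then applied to this $A_0$, not to $\partial_t g$. Your supersolution discussion in step one (``a weak supersolution never loses a strict subsolution test, hence $u^*\ge g$'') is not a workable argument either; the supersolution case is handled symmetrically with the same per-test-function constant. As written, your proof does not establish the theorem.
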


\subsection{Godunov's relaxation}

We show that relaxation is directly related to the classical Godunov's flux.
For the sake of simplicity, we present it in the monodimensional setting. 
We recall that this ``numerical'' flux is defined for $p,q \in \R$ by
\begin{equation}\label{eq::r11}
  G(q,p)=
  \begin{cases}
    \max_{[p,q]} H & \text{ if } p\le q,\\
    \min_{[q, p]} H & \text{ if }  p\ge q.
  \end{cases}
\end{equation}

\begin{theo}[Relaxation coincides with Godunov's relaxation]\label{th::r1}
  Assume $H,F_0\colon \R \to \R$ are continuous, $H$ is coercive and $F_0$ is non-increasing and semi-coercive.
  Then for any $p\in \R$, there is one and only one $\lambda \in \R$ such that  there exists $q\in \R$ with $\lambda=F_0(q)=G(q,p)$.
If $F_0G$ denotes  the map $p \mapsto \lambda$, then it coincides with the relaxation operator,
\[\mathfrak RF_0=F_0G.\]
\end{theo}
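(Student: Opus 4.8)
The plan is to prove this in two stages: first establish the well-posedness statement (existence and uniqueness of $\lambda$ for each $p$), then identify $F_0G$ with $\mathfrak R F_0$ by a direct case analysis according to whether $F_0(p) \ge H(p)$ or $F_0(p) \le H(p)$, using the formulas \eqref{eq::r16} and \eqref{eq::r17}. The key structural facts I would lean on are: $F_0$ is non-increasing and semi-coercive, so $F_0$ runs (loosely speaking) from $+\infty$ at $-\infty$ down to its infimum as $q \to +\infty$; while $q \mapsto G(q,p)$ for fixed $p$ is continuous, satisfies $G(p,p) = H(p)$, is non-decreasing in $q$ on $[p,+\infty)$ (since $G(q,p) = \max_{[p,q]}H$ there) and non-increasing on $(-\infty,p]$ (since $G(q,p) = \min_{[q,p]}H$ there), and is coercive as $q \to \pm\infty$ because $H$ is coercive. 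In particular $q \mapsto G(q,p)$ attains a minimum value $G(p,p)=H(p)$ is not quite right — rather, $G(\cdot,p) \ge H_-(p) \wedge (\text{stuff})$; I would instead note $\inf_q G(q,p)$ is finite and $\ge \min(H(p), \inf H)$, while $\sup_q G(q,p) = +\infty$.

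\emph{Existence and uniqueness of $\lambda$.} Consider the function $\Phi(q) := F_0(q) - G(q,p)$. I would argue that the equation $\lambda = F_0(q) = G(q,p)$ has a solution $q$ by an intermediate-value argument on $\Phi$: as $q \to -\infty$, $G(q,p) \to +\infty$ (coercivity of $H$, since $G(q,p)=\min_{[q,p]}H \to +\infty$... wait, $\min$ need not blow up) — here I must be careful. The right monotone quantity is to restrict attention to $q \le p$ when $F_0(p) \ge H(p)$ and to $q \ge p$ when $F_0(p) \le H(p)$; on each such half-line one of $F_0$ and $G(\cdot,p)$ is monotone and the other is continuous with the correct one-sided behavior, so $\Phi$ changes sign exactly once and vanishes at a unique point. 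More precisely, for $q \ge p$: $G(q,p) = \max_{[p,q]}H$ is non-decreasing and tends to $+\infty$, while $F_0(q)$ is non-increasing; hence $\Phi$ is strictly increasing on $[p,+\infty)$, giving at most one zero there, and it has one iff $\Phi(p) = F_0(p) - H(p) \le 0$. Symmetrically for $q \le p$: $G(q,p) = \min_{[q,p]}H$ is non-increasing and $\ge$ some finite bound, $F_0(q)$ is non-increasing too, so here I cannot use strict monotonicity of $\Phi$ directly; instead I would observe that the \emph{value} $G(q,p)$ ranges over the interval $[\min_{(-\infty,p]}H,\ H(p)]$ — actually $[\inf_{q'\le p}H(q'), H(p)]$ — and match it against the range of $F_0$. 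The cleanest route is: set $\lambda := \mathfrak R F_0(p)$ using \eqref{eq::r12}, and then \emph{exhibit} a $q$ with $F_0(q) = G(q,p) = \lambda$, and separately show any such $\lambda$ is forced to equal $\mathfrak R F_0(p)$.

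\emph{Identification $F_0G = \mathfrak R F_0$.} This is where I would spend the bulk of the effort, and it is the main obstacle. In the case $F_0(p) \ge H(p)$: by \eqref{eq::r12}, $\mathfrak R F_0(p) = \underline R F_0(p) = \sup_{q\ge p}(F_0\wedge H)(q)$. Because $F_0$ is non-increasing with $F_0(p)\ge H(p)$ and $H$ is coercive, as $q$ increases from $p$ the function $F_0 \wedge H$ equals $H$ near $p$ (where $H \le F_0$) until the first crossing point $q^\star \ge p$ where $F_0(q^\star) = H(q^\star)$, and the supremum of $F_0\wedge H$ over $[p,+\infty)$ is attained using $H$ on $[p,q^\star]$, giving $\sup_{q\ge p}(F_0\wedge H)(q) = \max_{[p,q^\star]}H = G(q^\star,p) = F_0(q^\star)$; thus $q^\star$ is exactly the point realizing $F_0G(p)$, and the two operators agree. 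The symmetric case $F_0(p)\le H(p)$ uses $\overline R F_0(p) = \inf_{q\le p}(F_0\vee H)(q)$ and $G(q,p) = \min_{[q,p]}H$ for $q\le p$, with the crossing point $q^\star\le p$. The one delicate point throughout is the \emph{monotonicity of $F_0$}: it guarantees that there is a well-defined first crossing point and that $F_0\wedge H$ (resp. $F_0\vee H$) has the claimed shape; I would state this as a short lemma (``$F_0$ and $H$ cross at a point between $p$ and the relevant side'') and note that coercivity of $H$ plus $F_0$ non-increasing makes the crossing set nonempty and its extreme point attained. Finally I would record that on $\{F_0 = H\}$ both descriptions give $F_0(p) = H(p) = G(p,p)$ with $q = p$, so the two case definitions are consistent, matching Remark~\ref{rem:relax-prop-1}.
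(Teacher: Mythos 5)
There is a genuine error in the identification step, in the case $F_0(p)\ge H(p)$. You take $q^\star$ to be the \emph{first crossing point of $F_0$ with $H$} to the right of $p$, and you then write the chain $\sup_{q\ge p}(F_0\wedge H)(q)=\max_{[p,q^\star]}H=G(q^\star,p)=F_0(q^\star)$. The first two equalities are fine, but the last one fails for non-monotone $H$: at the first point where $F_0$ meets $H$ you only get $F_0(q^\star)=H(q^\star)$, which can be strictly smaller than $\max_{[p,q^\star]}H$ if $H$ has a hump on $[p,q^\star]$ that stays below $F_0$. Concretely, take $p=0$, $F_0(q)=10-3q$ near $[0,1]$, and let $H$ rise from $0$ to $8$ at $q=0.4$, dip back down, and only cross $F_0$ near $q^\star\approx 0.9$ where $F_0(q^\star)\approx 7.3$; then $G(q^\star,0)=8\neq F_0(q^\star)$, so $q^\star$ does \emph{not} witness the Godunov matching, even though $\underline R F_0(0)=8$ is correct. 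The correct witness is the point $q'$ where the non-increasing $F_0$ meets the non-decreasing running maximum $q\mapsto\max_{[p,q]}H=G(q,p)$ (in the example $q'=2/3$, where $F_0(q')=8=G(q',0)$); this is in general strictly to the left of your $q^\star$. The intermediate-value argument you sketch for existence should be run on $\Phi(q)=F_0(q)-G(q,p)$ on $[p,+\infty)$ directly (non-increasing minus non-decreasing, $\Phi(p)\ge 0$, $\Phi\to-\infty$ by coercivity of $H$), producing $q'$, and then one checks $\sup_{[p,q']}(F_0\wedge H)=\max_{[p,q']}H=F_0(q')\ge \underline RF_0(q')$ using only the monotonicity of $F_0$. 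Your conflation of the two crossing points is exactly the simplification that is valid for quasi-convex $H$ and breaks down in the non-convex setting that is the point of this theorem.

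Two secondary remarks. First, your existence discussion as $q\to-\infty$ hesitates over whether $G(q,p)$ blows up; it need not, and the correct mechanism (which the paper uses) is that $G(q,p)\le H(p)$ for $q\le p$ while $F_0(q)\to+\infty$ by semi-coercivity, so $\Phi\to+\infty$; uniqueness of $\lambda$ follows in one line from the monotonicities of $F_0$ and of $G$ in its first argument, which you do not record. Second, for comparison: the paper does not argue by the direct two-case computation at all, but introduces set-valued Godunov semi-fluxes $\underline G,\overline G$, proves $F_0\underline G=\underline RF_0$ and $F_0\overline G=\overline RF_0$, and a composition identity $(F_0\overline G)\underline G=F_0G$, concluding via $\mathfrak RF_0=\underline R(\overline RF_0)$. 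Your direct route can be repaired (it is essentially Proposition~\ref{pro:n2} combined with the uniqueness of $\lambda$ and the case definition \eqref{eq::r12}), but as written the key step rests on a false identity.
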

\begin{rem}
  For some technical reasons that will appear in the proof of this result, it makes more sense to define
  the action of Godunov's flux $G$ on the right of $F_0$ (rather than on the left).
\end{rem}

\subsection{Comments}

\paragraph{Self-relaxed boundary conditions.} We will see that the relaxed boundary condition cannot be further  relaxed, \textit{i.e.} it  satisfies $\mathfrak R(\mathfrak R F_0)=\mathfrak R F_0$. When a function $F_0$ satisfies $F_0=\mathfrak R F_0$, then we say that it is {\it self-relaxed}.

\paragraph{The lower non-increasing envelope of the Hamiltonian.}
The lower non-increasing envelope  $H_-$ of $H$ satisfies semi-coercivity condition (\ref{eq:semi-coercive}), it is self-relaxed, and for any boundary function $F_0$ satisfying (\ref{a:HandF0}), we have
\[
\mathfrak R F_0 \ge H_-.
\]
In other words, $H_-$ is the minimal self-relaxed boundary function. It corresponds to the natural condition that appears for state contraint problems with convex Hamiltonians, and can be seen as a sort of generalization of it to the case of non-convex and coercive Hamiltonian.
The previous inequality  implies that every continuous weak $F_0$-subsolution is indeed a strong $H_-$-subsolution (see Proposition \ref{pro::28}). 
This explains  (at least for a junction with a single branch) the observation made by P.-L. Lions and P. Souganidis \cite{lions2016viscosity} that only the supersolution condition has to be checked at the junction point. In other words, it is sufficient to check that the function is a weak (or strong) $H_-$-supersolution.

\paragraph{Weak continuity condition.}
Notice that when $F_0$ does not satisfy the semi-coercivity condition,
it is necessary to impose a weak continuity condition on the boundary,
\[ \forall (t_0,x_0) \in (0,+\infty) \times \partial \Omega,
  \quad u^*(t_0,x_0)=\limsup_{(s,y)\to (t_0,x_0), y \in \Omega} u(s,y).\]
to ensure that the conclusion of Theorem~\ref{t:main} holds true.
If none of these conditions is satisfied, then the conclusion
may be wrong, as shown in the counter-example~\ref{rem::32} below. It is due to J. Gerrand. 
Such a weak continuity condition appears for instance in the
work by G. Barles and B. Perthame \cite{zbMATH04131139} in which
they prove comparison principle for discontinuous viscosity solutions
of the Dirichlet problem (in the stationary case).
Such a condition also appears in \cite{zbMATH06713740} and in subsequent papers.

\paragraph{The stationary case.}
A version of Theorem~\ref{t:main} is still valid without changes in the definition of $\mathfrak R F_0$ for stationary equations like
$$\left\{\begin{array}{ll}
u+ H(x,Du)=0, & \text{for}\quad x\in \Omega,\\
u + F_0(x,Du)=0, & \text{for}\quad x\in \partial \Omega
\end{array}\right.$$
with adapted assumptions on $H,F_0$, and naturally adapted definitions of weak and strong viscosity solutions.

\subsection{Review of literature and known results}

\paragraph{Boundary conditions for viscosity solutions.}

The Dirichlet problem is considered in the first papers dealing with viscosity solutions, see \cite{zbMATH03736487,MR0690039,MR0732102}. We mentioned above that the weak continuity condition first appears in \cite{zbMATH04131139} where the authors prove a comparison principle for discontinuous viscosity solutions of HJ equations with Dirichlet boundary conditions. In this article, the boundary condition is imposed in the generalized sense recalled earlier.

The state-constraint condition is a boundary condition that has been identified early in the literature when Hamiltonians are convex.
H.~M.~Soner \cite{MR0838056} proved a general uniqueness result by constructing a special test function pushing contact points inside the domain.
As far as the Neumann boundary condition is concerned, it has been first adressed by P.-L.~Lions \cite{zbMATH03966826} for Hamiltonians that are not necessarily convex.

This first result for the Neumann boundary condition has been  generalized later
by G.~Barles \cite{zbMATH01288036}. In this work, he also constructed a test function à la Soner. 
The Neumann boundary condition is easily interpreted in the optimal control setting.

\paragraph{Convex Hamiltonians and optimal control.}

In 2007, A. Bressan and Y. Hong studied optimal control problems on stratified domains. The case of junctions is the simplest geometric setting of stratified domains. For such a geometry,
two groups of authors studied convex Hamilton-Jacobi equations: Y. Achdou, F. Camilli, A. Cutri and N. Tchou \cite{zbMATH06189399} on the one hand and the second and third authors together
with H. Zidani \cite{zbMATH06144570} on the other hand. At the same time, in the two domains setting,  G. Barles, A. Briani and E. Chasseigne \cite{zbMATH06198069,zbMATH06348168} developped
an intermediate approach mixing PDE and optimal control tools for convex Hamiltonians. 

In the monodimensional setting, solutions of a HJ equation are naturally associated with solutions of the corresponding scalar conservation law. In the two domains setting,
B. Andreianov, K. H. Karlsen, N. H. Risebro \cite{zbMATH06101925} developped a theory for existence and uniqueness from which the second and third authors took inspiration
to write \cite{zbMATH06713740}. We also mention the work by B. Andreianov and K. Sbihi \cite{zbMATH06429108} for the one domain problem in great generality.

Later, the second and third authors \cite{zbMATH06713740} introduced the notion of flux-limited solutions and cook up a PDE method generalizing
the method of doubling of variables to prove comparison principles. 
The case of networks is treated in \cite{zbMATH06713740} while \cite{MR3690310} is concerned with multi-dimensional junctions.
They observed that the  state constraint boundary conditions can be interpreted in terms of flux limiters, see \cite[Proposition~2.15]{zbMATH06713740}. J. Guerand treated
 the multidimensional case of state constraints in \cite{MR3695961}. 
We also mention that the second author together
with V. D. Nguyen \cite{MR3709301} addressed the case of parabolic equations degenerating to Hamilton-Jacobi equations at the (multi-dimensional) junction.

In  \cite{zbMATH06350884}, Z. Rao, A. Siconolfi and H. Zidani adopted a pure optimal control approach to deal with accumulation of components.
More recently, A. Siconolfi \cite{zbMATH07541881} proposed another PDE method based on the notion of maximal subsolutions under trace constraints to prove
a comparison principle on networks without loops.  Hamiltonians are convex and depend on the space variable and the uniqueness result holds true for uniformly continuous sub/supersolutions.

Motivated by the study of a homogeneization problem, the notion of flux-limited solutions has also been extended by Y. Achdou and C. Le Bris \cite{achdou:hal-03870614} for a convex HJ problem in $\R^d\backslash \left\{0\right\}$ supplemented with a condition at the origin.

These works have been extended mainly for optimal control problems on stratified domains by Barles, Chasseigne \cite{zbMATH06498621} (see also the recent work of Jerhaoui, Zidani \cite{zbMATH07655162}), and recently by the same authors in a book Barles, Chasseigne \cite{barles:hal-01962955} which is a reference book on the topic, including boundary conditions, junction problems in any dimensions, stratified problems,
in particular in relation with optimal control problems and convex Hamiltonians.

\paragraph{Non-convex Hamiltonians.}

J. Guerand \cite{zbMATH06731793} proved  comparison principles for non-convex HJ equations of evolution type posed in the half real line. She adressed both the coercive and non-coercive
cases. In order to prove such uniqueness results, she introduced a relaxation operator $\mathfrak{J}$ and  proved the equivalence between weak and strong solutions.

P.-L. Lions and P. Souganidis \cite{lions2016viscosity, zbMATH06821670} also studied Hamilton-Jacobi equations posed on junctions
in the non-convex case. In particular, they introduced a blow-up method to prove the comparison principle  between bounded
uniformly continuous sub- and super-solutions.

\subsection{Organisation of the article}

In Section \ref{s2}, we present the main properties of the relaxation operator $\mathfrak R$ and introduce the notion of characteristic points.
In Section \ref{s3}, we discuss relations between weak  and strong (viscosity) solutions and propose a new proof of
Theorem~\ref{t:main}  (see Theorem \ref{th:weak-strong}). In this section, 
 we also discuss existence and stability of weak viscosity solutions.
In Section \ref{s4}, we recall Guerand's relaxation formula, and show that it is equivalent to the new relaxation formula (Theorem \ref{th::GR}).
In Section \ref{s5}, we introduce Godunov's relaxation formula, and show that it is equivalent to the new relaxation formula (Theorem \ref{th::r1}).
In Section \ref{sec:neumann-dir}, we treat the case of Neumann and Dirichlet boundary conditions and prove Theorems~\ref{t:neumann} and \ref{t:dirichlet}.
We also discuss the link between the relaxation operator for HJ equations and  scalar conservation laws.


\paragraph{Notation.}

For $a,b \in \R$, $a \wedge b = \min (a,b)$ and $a \vee b = \max (a,b)$.

\section{Relaxation operators and characteristic points}\label{s2}

We recall that we always assume that $H,F_0$ satisfy \eqref{a:HandF0}.
In this section, we discuss properties of the relaxation operators.
For clarity, time, space and tangential variables are omitted throughout this section. 

\subsection{Relaxation operators}
We begin by some properties on the sub and super-relaxation operators. We recall that there are defined respectively in \eqref{eq::r16} and \eqref{eq::r17} and we refer to Figure \ref{F3} for a representation of the action of these operators on the function $F_0$.

\begin{lem}[First properties of the operators $\underline R$ and $\overline R$]
  \label{lem:relax-prop-1}
 Assume (\ref{a:HandF0}).
Then the functions  $\underline R F_0$ and $\overline R F_0$ are continuous, nonincreasing and semi-coercice, and
\[F_0 \wedge H \le \underline R F_0 \le F_0 \le \overline R F_0 \le F_0 \vee H \]
and \(\underline R  (\underline R  F_0)=\underline R  F_0\) and \(\overline R  (\overline R  F_0)=\overline R  F_0\).
\end{lem}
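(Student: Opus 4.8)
The plan is to prove each asserted property of $\underline R F_0$ and $\overline R F_0$ separately, exploiting the symmetry between the two operators (replacing $(F_0,H,n)$ by $(-F_0,-H,-n)$, or equivalently flipping $q\le p$ into $q\ge p$, turns $\underline R$ into $-\overline R$ applied to $-F_0$), so that it suffices to treat, say, $\underline R$. Recall $\underline R F_0(p)=\sup_{q\ge p}(F_0\wedge H)(q)$ in the one-dimensional normalization used in this section.

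First I would establish the ordering chain. The inequality $\underline R F_0\le F_0\vee H$ will not actually be needed for $\underline R$ (that bound is the one relevant to $\overline R$); what must be shown is $F_0\wedge H\le \underline R F_0\le F_0$. The left inequality is immediate by taking $q=p$ in the supremum. For $\underline R F_0\le F_0$, the key point is that $F_0$ is non-increasing: for every $q\ge p$ we have $(F_0\wedge H)(q)\le F_0(q)\le F_0(p)$, and taking the supremum over $q\ge p$ gives the claim. The symmetric argument (using that $F_0$ non-increasing also makes $q\le p\Rightarrow F_0(q)\ge F_0(p)$) yields $F_0\le \overline R F_0\le F_0\vee H$. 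So the monotonicity of $F_0$ is what drives the whole ordering, and it is the one hypothesis I would flag as essential here rather than cosmetic.

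Next, monotonicity and continuity of $\underline R F_0$. That $\underline R F_0$ is non-increasing is automatic: $p_1\le p_2$ means $\{q:q\ge p_2\}\subseteq\{q:q\ge p_1\}$, so the sup over the smaller set is no larger. Semi-coercivity: since $\underline R F_0\ge F_0\wedge H$ and both $F_0$ (semi-coercive) and $H$ (coercive, hence semi-coercive for the one-sided limit $r\to-\infty$, i.e. $p\cdot n\to+\infty$) blow up in the relevant direction, so does their minimum, hence so does $\underline R F_0$; one must be slightly careful to check that $F_0\wedge H\to+\infty$ requires both factors to do so in the same regime, which holds because $\to+\infty$ as $p\cdot n\to+\infty$ is assumed for $F_0$ and follows from coercivity for $H$. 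Continuity is the one genuinely non-trivial verification. Upper semicontinuity of a supremum of continuous functions is not free, so I would argue directly: $\underline R F_0$ is non-increasing, hence has at most countably many discontinuities and one-sided limits everywhere; I would show left- and right-continuity by using that $F_0\wedge H$ is continuous and that, by coercivity, for each fixed $p$ the supremum $\sup_{q\ge p}(F_0\wedge H)(q)$ is attained (the function $F_0\wedge H$ is continuous and tends to $+\infty$ at $+\infty$, so on $[p,+\infty)$ it achieves its max at some finite $q_p$). Attainment plus continuity of $q\mapsto(F_0\wedge H)(q)$ and the obvious monotone convergence of the constraint sets as $p'\to p$ then gives $\underline R F_0(p')\to\underline R F_0(p)$ from both sides. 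This attainment argument is the main obstacle, but it is short once coercivity is invoked.

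Finally, idempotence $\underline R(\underline R F_0)=\underline R F_0$. Write $G:=\underline R F_0$. We already know $G$ is continuous, non-increasing, semi-coercive, and $G\le F_0$ with $G\le H\vee(\text{something})$; more useful is that $G\le F_0$ and $G=\sup_{q\ge\cdot}(F_0\wedge H)$. One inclusion, $\underline R(G)\le G$, follows from the general fact $\underline R F_0\le F_0$ applied to $G$ in place of $F_0$ (valid since $G$ is non-increasing, which is all that argument used). For the reverse, $\underline R(G)=\sup_{q\ge p}(G\wedge H)(q)\ge\sup_{q\ge p}\big((F_0\wedge H)(q)\wedge H(q)\big)=\sup_{q\ge p}(F_0\wedge H)(q)=G(p)$, where I used $G\ge F_0\wedge H$ pointwise (the left inequality of the ordering chain) and $(F_0\wedge H)\wedge H=F_0\wedge H$. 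Hence $\underline R(G)=G$. The statement for $\overline R$ follows by the symmetry noted at the outset, reversing all inequalities and replacing $\wedge$ by $\vee$ and $q\ge p$ by $q\le p$.
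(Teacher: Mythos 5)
Your treatment of the ordering chain, monotonicity and semi-coercivity is correct and matches the paper's (which proves exactly the inequality string $(F_0\wedge H)(p)\le\sup_{q\ge p}(F_0\wedge H)(q)\le\sup_{q\ge p}F_0(q)=F_0(p)$ using the monotonicity of $F_0$, and gets semi-coercivity from the lower bound $F_0\wedge H$). Your idempotence argument is a genuinely different and in fact cleaner route: the paper picks a minimal maximizer $q^*\ge p$ with $\underline RF_0(p)=(F_0\wedge H)(q^*)$ and chains inequalities through $q^*$, whereas your sandwich $\underline R G\le G$ (from the chain applied to the non-increasing $G$) together with $G\wedge H\ge(F_0\wedge H)\wedge H=F_0\wedge H$ avoids any appeal to attainment of the supremum. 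That is a real simplification.

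There is, however, one step that would fail as written: in the continuity argument you assert that $F_0\wedge H$ ``tends to $+\infty$ at $+\infty$'' on $[p,+\infty)$ and deduce attainment of the supremum from this. This is false, and you have conflated the two directions: the blow-up of $F_0\wedge H$ (from semi-coercivity of $F_0$ and coercivity of $H$) occurs as $p\cdot n\to+\infty$, i.e.\ as $q\to-\infty$ in the one-dimensional normalization, which is the \emph{opposite} direction from the half-line $[p,+\infty)$ over which $\underline R$ takes its supremum. On that half-line one has $F_0\wedge H\le F_0\le F_0(p)$, so the function is bounded above and tends to $\inf F_0$, not to $+\infty$; moreover a function tending to $+\infty$ would have no finite maximizer at all, so the parenthetical cannot justify the conclusion it is attached to. The attainment you want is nevertheless true, but for a different reason: if every maximizing sequence escapes to $+\infty$ then $S:=\sup_{[p,\infty)}(F_0\wedge H)\le\inf_{[p,\infty)}F_0$, and coercivity of $H$ then produces $q_0$ with $H(q_0)\ge S$ and $F_0(q_0)\ge S$, whence $(F_0\wedge H)(q_0)=S$. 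Alternatively, continuity of $\underline RF_0$ does not need attainment at all: right-continuity follows from $\underline RF_0(p')\ge(F_0\wedge H)(p')\to(F_0\wedge H)(p)$ combined with monotonicity, and left-continuity from $\underline RF_0(p')=\max\bigl(\sup_{[p',p]}(F_0\wedge H),\underline RF_0(p)\bigr)$ and uniform continuity of $F_0\wedge H$ on $[p',p]$. Either repair closes the gap; as written, the step is wrong.
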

\begin{rem} \label{rem:relax-prop-1}
  We will use repeatedly the following easy consequences of this lemma:
  $\{F_0 \le H \} \subset \{ \underline R F_0 = F_0\}$ and $\{F_0 \ge H \} \subset \{ \overline R F_0 = F_0\}$. 
\end{rem}
\begin{proof}
  We only justify the properties satisfied by $\underline R F_0$ since proofs for $\overline R F_0$ are similar.
  We have by definition
\[
  (F_0 \wedge H)(p)\le \underline R F_0(p)=\sup_{q\ge p} (F_0 \wedge H)(q) \le \sup_{q\ge p} F_0(q)=F_0(p)
\]
where we have used the monotonicity of $F_0$. Moreover, by construction, $\underline R F_0$ is nonincreasing and  continuous.
The fact that $H$ is coercive and $F_0$ is semi-coercive implies that $F_0 \wedge H$ is also semi-coercive, and then $\underline RF_0$ is semi-coercive.

We set \(F:=\underline RF_0.\)
On the one hand, by coercivity of $H$, there exists some minimal $q^*\ge p$ such that
\[F(p)= \underline R F_0(p)=(F_0 \wedge H)(q^*).\]
Since $\underline R F$ is non-increasing, $q^* \ge p$ and $F(q^*)\ge (F_0\wedge H)(q^*)$, we have 
\[F(p)\ge \underline RF(p)\ge \underline RF (q^*) \ge  (F \wedge H)(q^*)\ge (F_0 \wedge H)(q^*)=F(p).\]
 The previous inequalities imply in particular that $F = \underline RF$. 
\end{proof}

We now define the \emph{relaxation operator}
\begin{equation}\label{eq:defi-relax}
(\mathfrak R F_0)(p):=\begin{cases}
\underline R F_0(p)&\quad \text{if}\quad F_0(p)\ge H(p)\\
\overline R F_0(p)&\quad \text{if}\quad F_0(p)\le H(p).
\end{cases}
\end{equation}
In particular, it satisfies \(|\mathfrak R F_0-H|\le |F_0-H|\)
as we will show next. In this sense, we see that $\mathfrak R F_0$ is closer to $H$ than $F_0$ itself.
\begin{lem}[Nice properties of the operator $\mathfrak R$]
  \label{lem:relax-op}
Assume \eqref{a:HandF0}.
The function $F:=\mathfrak  R F_0$ is well-defined, continuous, non-increasing, semi-coercive and satisfies
\begin{eqnarray}
\nonumber 
  \begin{cases}
    \underline R F=F=\overline R F\\
    \mathfrak R F=F
  \end{cases}
\\[1ex]
\nonumber F= \overline R(\underline R F_0)=\underline R(\overline R F_0) \\[1ex]
\label{eq::1}
\begin{cases}
F_0\le H &\quad \Longrightarrow \quad F_0\le \mathfrak R F_0 \le H,\\
F_0\ge H &\quad \Longrightarrow \quad F_0\ge \mathfrak R F_0 \ge H.
\end{cases}
\end{eqnarray}
For $H_-$ given by \(H_-(p)=\inf_{q\le p} H(q)\), the function $F_1:=F_0 \vee H_-$ is semi-coercive and satisfies
\[
\mathfrak R F_0= \mathfrak R F_1.
\]
\end{lem}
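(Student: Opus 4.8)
The plan is to verify the listed properties in a sequence that exploits the already-established Lemma~\ref{lem:relax-prop-1}. First, $F := \mathfrak R F_0$ is well-defined because on the overlap set $\{F_0 = H\}$ one has, by Remark~\ref{rem:relax-prop-1}, both $\underline R F_0 = F_0$ and $\overline R F_0 = F_0$, so the two branches of the definition agree there; continuity then follows from continuity of $\underline R F_0$ and $\overline R F_0$ together with the fact that $\{F_0 \ge H\}$ and $\{F_0 \le H\}$ are closed and cover $\R$. Monotonicity and semi-coercivity of $F$ are inherited on each branch from Lemma~\ref{lem:relax-prop-1}, with a small gluing argument: since $\underline R F_0 \le F_0 \le \overline R F_0$ and the two branches coincide on $\{F_0 = H\}$, a point-by-point comparison shows $F$ is globally non-increasing and tends to $+\infty$ as $p \to -\infty$ (semi-coercivity in the one-dimensional reduction). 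The bounds~\eqref{eq::1} are immediate: on $\{F_0 \le H\}$ we have $F = \overline R F_0$, and Lemma~\ref{lem:relax-prop-1} gives $F_0 \le \overline R F_0 \le F_0 \vee H = H$; symmetrically on $\{F_0 \ge H\}$.

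Next I would prove the idempotency-type identities $\underline R F = F = \overline R F$ and hence $\mathfrak R F = F$. The key observation is $|F - H| \le |F_0 - H|$, which follows directly from~\eqref{eq::1}: on $\{F_0 \le H\}$, $F$ lies between $F_0$ and $H$, so $H - F \le H - F_0 = |F_0 - H|$, and similarly on the other side. In particular $\{F_0 \le H\} \subset \{F \le H\}$ and $\{F_0 \ge H\} \subset \{F \ge H\}$, and on $\{F = H\}$ everything is trivial. To show $\underline R F = F$: on the region where $F \le H$ this is Remark~\ref{rem:relax-prop-1} applied to $F$; on the region where $F \ge H$ (which forces $F_0 \ge H$ up to the equality set, where $F = \overline R F_0 = F_0$… but here $F = \underline R F_0$) I use that $\underline R F_0 = \underline R(\underline R F_0)$ by Lemma~\ref{lem:relax-prop-1} plus a localisation: one checks that for $p$ with $F_0(p) \ge H(p)$, the supremum defining $\underline R F(p)$ is attained over $q \ge p$ where again $F(q) = \underline R F_0(q)$, so $\underline R F(p) = \underline R(\underline R F_0)(p) = \underline R F_0(p) = F(p)$. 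The symmetric argument gives $\overline R F = F$, and then $\mathfrak R F = F$ directly from the definition~\eqref{eq:defi-relax} since both branches return $F$.

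For the composition identities $F = \overline R(\underline R F_0) = \underline R(\overline R F_0)$, I would argue as follows for the first one (the second is symmetric). Write $G := \underline R F_0$, which is non-increasing, continuous and semi-coercive with $F_0 \wedge H \le G \le F_0$. One shows $G \le H$ on $\{F_0 \le H\}$ and $G = F_0 \ge H$… no: rather, using $\{F_0 \le H\} \subset \{\underline R F_0 = F_0\}$ one gets $G = F_0$ there, hence $G \le H$; and on $\{F_0 \ge H\}$ one needs $G \ge H$, which holds because $G(p) = (F_0 \wedge H)(q^*) \ge H(p)$? — this requires care and is the point where I expect the main obstacle. The cleanest route is to verify that $\{G \le H\} = \{F_0 \le H\}$ (up to null considerations) using $|G - H|$ estimates analogous to those for $F$, so that $\overline R G = \mathfrak R G$ restricted appropriately, and then $\overline R(\underline R F_0)$ agrees with $\underline R F_0$ on $\{G \ge H\} = \{F_0 \ge H\}$ and with $\overline R F_0$ on $\{F_0 \le H\}$, which is exactly the definition of $F$. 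Finally, for the last assertion, set $F_1 := F_0 \vee H_-$. Semi-coercivity of $F_1$ is clear since $H_-$ is semi-coercive (indeed $H_-(p) = \inf_{q \le p} H(q) \to +\infty$ as $p \to -\infty$ by coercivity of $H$). To get $\mathfrak R F_0 = \mathfrak R F_1$, the key is that $\underline R$ and $\overline R$ only "see" $F_0 \wedge H$ and $F_0 \vee H$ respectively, and that $F_0 \vee H_- \le F_0 \vee H$ everywhere while $F_0 \vee H_- \ge$ (the relevant envelope): more precisely one checks $(F_1 \wedge H) = (F_0 \wedge H)$ pointwise — since $H_- \le H$ gives $F_1 \wedge H = (F_0 \vee H_-) \wedge H$, and where $F_0 \le H$ this equals $\max(F_0, H_-)$… so this needs $\max(F_0,H_-) \wedge H$; on $\{F_0 \ge H\}$ it is $H = F_0 \wedge H$, and on $\{F_0 \le H\}$ it is $\max(F_0,H_-) \le H$, and one must argue $\underline R(\max(F_0,H_-)) = \underline R F_0$ because adding the lower envelope $H_-$ below $F_0$ does not change the sup of $(\cdot) \wedge H$ over $q \ge p$. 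I expect this last verification, and the analogous one for $\overline R$ with $F_0 \vee H_-$, to be the genuinely delicate computation; it is where the specific interplay between $H_-$ and the coercivity of $H$ is used, and I would reduce it to showing that for every $p$, $\sup_{q \ge p}(F_1 \wedge H)(q) = \sup_{q \ge p}(F_0 \wedge H)(q)$ and the corresponding $\inf$ identity, after which~\eqref{eq:defi-relax} forces $\mathfrak R F_0 = \mathfrak R F_1$.
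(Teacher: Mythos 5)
Your treatment of the easy properties (well-definedness via the overlap set $\{F_0=H\}$, continuity, monotonicity, semi-coercivity, the bounds \eqref{eq::1}, and the idempotency $\underline R F = F = \overline R F$ via the sandwich $\underline R F_0 \le F \le \overline R F_0$ and the idempotency from Lemma~\ref{lem:relax-prop-1}) matches the paper's Steps 1, 2 and 4. But the two substantive claims are not actually proved, and in one place the proposed key step is false.

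First, the composition identity $F=\overline R(\underline R F_0)=\underline R(\overline R F_0)$. You flag the obstacle yourself and then propose, as the ``cleanest route,'' to deduce that $\overline R(\underline R F_0)$ ``agrees with $\overline R F_0$ on $\{F_0\le H\}$.'' That is precisely the statement that needs proof, and knowing $\underline R F_0=F_0$ on $\{F_0\le H\}$ does not yield it, because $\overline R G(p)=\inf_{q\le p}(G\vee H)(q)$ ranges over $q$ outside $\{F_0\le H\}$, where $\underline R F_0\ne F_0$ in general. The paper closes this gap by taking a maximal interval $(a,b)\subset\{F_0<H\}$, using $F_0(a)=H(a)$, hence $\underline R F_0(a)=H(a)$, and the monotonicity of $\underline R F_0$ and of $F_0$ to show that the contribution of $q\le a$ to both infima collapses to the single value $H(a)$; only then do the two infima coincide on $(a,b)$. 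Some such localisation argument is indispensable; your plan does not contain it. (Also, $\{\underline R F_0\le H\}=\{F_0\le H\}$ is not quite true: one only has $\{F_0\le H\}\subset\{\underline R F_0 \le H\}$, the discrepancy sitting inside $\{\underline R F_0=H\}$.)

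Second, the identity $\mathfrak R F_0=\mathfrak R F_1$ with $F_1=F_0\vee H_-$. Your proposed reduction to $\sup_{q\ge p}(F_1\wedge H)(q)=\sup_{q\ge p}(F_0\wedge H)(q)$ fails: since $(F_1\wedge H)=(F_0\wedge H)\vee(H_-\wedge H)=(F_0\wedge H)\vee H_-$ and $H_-$ is non-increasing, one gets $\underline R F_1=\underline R F_0\vee H_-$, which can be strictly larger than $\underline R F_0$. For instance with $H(p)=|p|$ and $F_0(p)=-p$ one has $\underline R F_0(p)=-p<0=H_-(p)=\underline R F_1(p)$ for $p>0$; the heuristic that $H_-$ sits ``below $F_0$'' is wrong — where $H_->F_0$ is exactly where taking the max matters. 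The identity that \emph{is} true and suffices is the one for $\overline R$: since $H_-\le H$,
\[ \overline R F_1(p)=\inf_{q\le p}\big((F_0\vee H_-)\vee H\big)(q)=\inf_{q\le p}(F_0\vee H)(q)=\overline R F_0(p), \]
and one then concludes $\mathfrak R F_1=\underline R(\overline R F_1)=\underline R(\overline R F_0)=\mathfrak R F_0$ — which is why the composition identity of the previous paragraph must be established first. As written, your plan for both of these points would not go through.
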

\begin{proof}
The proof is split in several steps. 
  
\paragraph{Step 1: preliminaries.}
We first notice that from Lemma~\ref{lem:relax-prop-1}, we have
\begin{equation}\label{eq:F0H}
\left\{\begin{array}{lll}
F_0(p)\le H(p) & \quad \Longrightarrow \quad \underline R F_0=F_0 \le \overline R F_0 \le H &\quad \text{at}\quad p\\
F_0(p)\ge H(p) & \quad \Longrightarrow \quad \overline R F_0=F_0 \ge \underline R F_0 \ge H &\quad \text{at}\quad p.
\end{array}\right.
\end{equation}
This implies that
$$F_0(p)= H(p) \quad \Longrightarrow \quad \overline R F_0=F_0 = \underline R F_0 =H\quad \text{at}\quad p.$$
Hence  the definition of $\mathfrak R F_0$ is equivalent to the following one,
\[
  (\mathfrak R F_0)(p):=
  \begin{cases}
    \underline R F_0(p)&\quad \text{if}\quad F_0(p)> H(p),\\
    F_0(p) &\quad \text{if}\quad F_0(p)= H(p),\\
    \overline R F_0(p)&\quad \text{if}\quad F_0(p) < H(p).
  \end{cases}
\]
In particular we see that $F:=\mathfrak R F_0$ is continuous, non-increasing and semi-coercive. 

\paragraph{Step 2: Effect of the operators on $F=\mathfrak R F_0$.}
We have
\[\underline R F_0 \le F=\mathfrak R F_0 \le \overline R F_0.\]
Hence, thanks to Lemma~\ref{lem:relax-prop-1} and the previous step,
\begin{align*}
  F &\ge \underline R F=\mathfrak R F \ge  \underline R ( \underline R F_0)= \underline R F_0 =F \quad \text{in}\quad \left\{F\ge H\right\} \\
  F &\le \overline R F=\mathfrak R F \le  \overline R ( \overline R F_0)= \overline R F_0 =F \quad \text{in}\quad \left\{F\le H\right\}.
\end{align*}
This implies that \(\mathfrak R F=F\). Moreover \eqref{eq:F0H} implies that $\{ F \le H \} \subset \{\underline R F =F\}$ and
$\{ F \ge H \} \subset \{ \overline R F =F \}$. We thus also get \(\underline R F= F =\overline R F.\)

\paragraph{Step 3: $\overline R (\underline R F_0)=F$ and $\underline R (\overline R F_0)=F$.}
We only prove the first equality since the proof of the second one is very similar. It amounts to prove that
\[\overline R (\underline R F_0)=
  \begin{cases}
	\underline R F_0 &\quad \text{in}\quad \{F_0>  H\}\\
	F_0 & \quad \text{in} \quad \{F_0=H\} \\
        \overline R F_0 &\quad \text{in}\quad \{F_0< H\}.
  \end{cases}
\]
The equality in the set $\{ F_0=H\}$ follows directly from Lemma~\ref{lem:relax-prop-1}. 

To check this equality in $\{F_0 > H\}$, we recall that Lemma~\ref{lem:relax-prop-1} implies that
$\{ F_0 > H \} \subset \{\underline R F_0 \ge H\}$, and then by Remark \ref{rem:relax-prop-1}, we get
$$\overline R(\underline R F_0)=\underline R F_0 \quad \text{on}\quad \left\{F_0>H\right\}.$$

To check this equality in $\{F_0<H\}$, we consider some maximal interval $(a,b)\subset \{F_0< H\}$.
Assume first that $a> - \infty$. 
In this case, we have $F_0(a)=H(a)$. Recalling that $\{ F_0 \le H \} \subset \{\underline R F_0 = F_0\}$ (see Lemma~\ref{lem:relax-prop-1}), we get that for $p\in (a,b)$,
\begin{align*}
\overline R (\underline R F_0)(p)&=\ \inf_{q\le p} (\underline R F_0 \vee H)(q)\\
&=\ \min\left\{\inf_{q\in [a,p]} (\underline R F_0 \vee H)(q),H(a)\right\}\\
&=\ \min\left\{\inf_{q\in [a,p]} (F_0 \vee H)(q),H(a)\right\}\\
&=\ \inf_{q\le p} (F_0 \vee H)(q)\\
&= \overline R F_0(p).
\end{align*}
Assume now that $a=-\infty$.
Then the same computation works with $a=-\infty$, $F_0(a)=H(a)=+\infty$, and $[a,p]$ replaced by $(-\infty,p]$.

\paragraph{Step 4: Proof of \eqref{eq::1}.} Combining \eqref{eq:F0H} and the fact that $\mathfrak R F=\overline R(\underline RF_0)=\underline R(\overline RF_0)$, we get the desired result.

\paragraph{Step 5: properties  of $F_1$.}
We have 
\[\overline R F_0(p)=\inf_{q\le p} (F_0 \vee H)(q)\]
and since $H_- \le H$, the function $F_1=F_0 \vee H_-$ satisfies 
$$\overline R F_1(p)=\inf_{q\le p} ((F_0 \vee H_-) \vee H)(q)=\inf_{q\le p} (F_0 \vee H)(q)=\overline R F_0(p).$$
Hence
\(\mathfrak R F_1=\underline R (\overline R F_1)= \underline R (\overline R F_0)=\mathfrak R F_0.\)
 Finally $F_1$ inherits semi-coercivity from  $H_-$.
\end{proof}

We now have the following tools.
\begin{lem}[Optimality and local properties of $\underline R F_0$]
  \label{lem:opti-loc}
Let $p\in \R$.
\begin{enumerate}[label=(\roman*)]
\item {\sc  (Optimality properties)} \label{l:loc}
  Let $q \ge p$ be minimal  such that
\[\underline R F_0(p)=(F_0 \wedge H)(q).\]
If $F_0(p)\ge  H(p)$ then
\[
  \begin{cases} 
    F_0(q)\ge H(q)\\
    \underline R F_0 = H(q) \quad \text{in}\quad [p,q]\\
    H < H(q) \quad \text{in}\quad [p,q).
  \end{cases}
\]
\item {\sc (Local properties)} \label{l:local}
If \(\underline R F_0(p)> H(p)\) then 
\(\underline R F_0\) is constant in  $[p-\eps,p+\eps)$ for some $\eps>0$. 
\end{enumerate}
\end{lem}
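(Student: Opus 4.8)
The plan is to prove the two parts of Lemma~\ref{lem:opti-loc} by exploiting the variational characterization \eqref{eq::r16} of $\underline R F_0$ together with the coercivity of $H$, which guarantees the supremum is attained.

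\textbf{Part (i): optimality properties.} Suppose $F_0(p) \ge H(p)$ and let $q \ge p$ be minimal with $\underline R F_0(p) = (F_0 \wedge H)(q)$. First I would argue that $F_0(q) \ge H(q)$, hence $(F_0\wedge H)(q) = H(q)$ and $\underline R F_0(p) = H(q)$. If instead $F_0(q) < H(q)$, then $(F_0 \wedge H)(q) = F_0(q)$; since $F_0$ is non-increasing, for $q' \in [p,q]$ we have $F_0(q') \ge F_0(q)$, and by continuity we can find a point strictly between $p$ and $q$ where $F_0 \wedge H$ is at least $F_0(q)$ (using that $F_0(p) \ge H(p)$ forces $\{F_0 \ge H\}$ to meet $[p,q]$, and on that set $F_0\wedge H = H$, while at $q$ the value is $F_0(q) < H(q)$), contradicting the minimality of $q$ — this is the delicate step, where one must chase the value $H(q)$ back along the interval. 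Once $\underline R F_0(p) = H(q)$ is established, the identity $\underline R F_0 = H(q)$ on $[p,q]$ follows because $\underline R F_0$ is non-increasing: for $r \in [p,q]$, $\underline R F_0(r) \le \underline R F_0(p) = H(q)$, while $\underline R F_0(r) \ge (F_0\wedge H)(q) = H(q)$ since $q \ge r$. Finally, $H < H(q)$ on $[p,q)$: if $H(r) \ge H(q)$ for some $r \in [p,q)$, then since $F_0(r) \ge \underline R F_0(r) = H(q)$ we would get $(F_0 \wedge H)(r) = \min(F_0(r), H(r)) \ge H(q) = \underline R F_0(p)$, so $r$ is also a maximizer with $r < q$, contradicting minimality of $q$.

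\textbf{Part (ii): local constancy.} Assume $\underline R F_0(p) > H(p)$. Then certainly $F_0(p) \ge \underline R F_0(p) > H(p)$, so Part (i) applies and yields a minimal $q \ge p$ with $\underline R F_0 \equiv H(q)$ on $[p,q]$ and $H < H(q)$ on $[p,q)$; in particular $H(p) < H(q)$ strictly, so $p < q$, i.e.\ $p \in [p,q)$ lies in the interior of the constancy interval from the right, giving $\underline R F_0 \equiv H(q)$ on $[p, p+\eps)$ for small $\eps > 0$ (take any $\eps \le q - p$). For the left side, I would use continuity of $H$: since $H(p) < H(q)$, there is $\eps > 0$ with $H < H(q)$ on $(p-\eps, p]$; then for $r \in (p-\eps, p]$, since $q \ge r$, $\underline R F_0(r) \ge (F_0 \wedge H)(q) = H(q)$, while also $\underline R F_0(r) = \sup_{s \ge r}(F_0\wedge H)(s) = \max\{\sup_{s\in[r,p]}(F_0\wedge H)(s),\ \underline R F_0(p)\}$, and on $[r,p]$ we have $F_0\wedge H \le \max(F_0, H) $ — more precisely $F_0 \wedge H \le H < H(q)$ wherever $F_0 \le H$ and $F_0\wedge H = F_0 \le F_0(r)$, so one checks this sup does not exceed $H(q)$ — hence $\underline R F_0(r) = H(q) = \underline R F_0(p)$. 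Combining both sides gives constancy on $(p-\eps, p+\eps)$ after shrinking $\eps$.

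The main obstacle I anticipate is the minimality argument in Part (i): one must carefully use the monotonicity of $F_0$, continuity of $H$, and the hypothesis $F_0(p) \ge H(p)$ together to locate where the maximum of $F_0 \wedge H$ is achieved and rule out earlier maximizers. Once that combinatorial/continuity bookkeeping is done cleanly, Part (ii) is essentially a corollary obtained by feeding $F_0(p) \ge \underline R F_0(p) > H(p)$ into Part (i) and adding a short one-sided continuity argument on the left.
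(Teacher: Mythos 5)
Your proposal is correct and follows essentially the same route as the paper: both rest on the variational formula for $\underline R F_0$, the minimality of $q$, the monotonicity of $F_0$ and $\underline R F_0$, and the continuity of $H$. The only organizational difference is in establishing $F_0(q)\ge H(q)$: you argue by contradiction via the intermediate value theorem (locating a point of $\{F_0=H\}$ in $[p,q)$ that is also a maximizer), whereas the paper introduces the point $q_0:=\sup\{q'\ge p:\ F_0\ge H \text{ on } [p,q']\}$ and shows directly that $\underline R F_0(p)=\max_{[p,q_0]}H$, hence $q\in[p,q_0]$; both mechanisms are sound.
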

\begin{lem}[Optimality and local properties of $\overline R F_0$]
  \label{lem:opti-locb}
Let $p\in \R$.
\begin{enumerate}[label=(\roman*)]
\item {\sc (Optimality properties)} Let $q\le p$ be maximal such that \[\overline R F_0(p)=(F_0 \vee H)(q).\]
If
\(F_0(p)\le  H(p)\), then
\[\begin{cases}
F_0(q)\le H(q) \\
\overline R F_0= H(q) \quad \text{in}\quad [q,p]\\
H> H(q) \quad \text{in}\quad (q,p].
\end{cases}
\]
\item {\sc (Local properties)} \label{l:localb}
If \(\overline R F_0(p)< H(p)\)
then \(\overline R F_0\) is constant in  $(p-\eps,p+\eps]$ for some $\eps >0$.
\end{enumerate}
\end{lem}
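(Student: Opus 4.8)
Lemma~\ref{lem:opti-locb} is the mirror image of Lemma~\ref{lem:opti-loc}, so the plan is to prove it by the symmetry $p \mapsto -p$, or equivalently by transcribing the proof of Lemma~\ref{lem:opti-loc} with inequalities reversed. Concretely, observe that if we set $\tilde H(p) := H(-p)$ and $\tilde F_0(p) := F_0(-p)$, then $\tilde F_0$ is still non-increasing (it is the composition of a non-increasing function with $p \mapsto -p$; wait — one must be slightly careful here, since $F_0$ non-increasing gives $\tilde F_0$ non-decreasing). So the cleanest route is not a literal reflection but a direct argument paralleling Lemma~\ref{lem:opti-loc}, using coercivity of $H$ to extract the extremal $q$, which I sketch below.

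\begin{proof}
  Both assertions follow from Lemma~\ref{lem:opti-loc} applied after the change of variable $p \mapsto -p$. Indeed, set $\check H(p):=H(-p)$ and $\check F_0(p):=F_0(-p)$; then $\check H$ is coercive and, although $\check F_0$ is now \emph{non-decreasing}, the definitions still make sense and one checks directly from \eqref{eq::r16}--\eqref{eq::r17} that
  \[
    \overline R F_0(p) = \underline{R}^{\vee}\check F_0(-p), \qquad \underline{R}^{\vee} G(p):=\sup_{q\ge p}(G\vee \check H)(q),
  \]
  so the roles of $\wedge$ and $\vee$ are swapped. Rather than set up this formalism, we argue directly, mimicking the proof of Lemma~\ref{lem:opti-loc}.

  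\emph{(i) Optimality.} Assume $F_0(p)\le H(p)$. By coercivity of $H$, the supremum in the definition $\overline R F_0(p)=\inf_{q\le p}(F_0\vee H)(q)$ is attained; let $q\le p$ be the maximal such minimizer. If we had $F_0(q)> H(q)$, then $(F_0\vee H)(q)=F_0(q)$; since $F_0$ is non-increasing and $q\le p$, we would get $(F_0\vee H)(p)\ge F_0(p)\ge \ldots$ —more cleanly: $F_0(q)\ge F_0(p)$ and, as $F_0(p)\le H(p)$, continuity produces a point $q'\in(q,p]$ with $F_0(q')=H(q')<F_0(q)=(F_0\vee H)(q)$, contradicting minimality of the value $(F_0\vee H)(q)$. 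Hence $F_0(q)\le H(q)$, so $(F_0\vee H)(q)=H(q)$ and $\overline R F_0(p)=H(q)$. For any $r\in[q,p]$ we have $\overline R F_0(r)=\inf_{s\le r}(F_0\vee H)(s)$; since $q\le r$, this infimum is $\le H(q)$, and since $q$ is the maximal minimizer over $(-\infty,p]$, no $s\le r\le p$ does strictly better, so $\overline R F_0(r)=H(q)$, which gives the second line. Finally, for $r\in(q,p]$, if $H(r)\le H(q)$ then $r$ would be a minimizer of $(F_0\vee H)$ over $(-\infty,p]$ with value $\ge H(q)$; combined with $F_0(r)\le \overline R F_0(r)=H(q)$ this forces $(F_0\vee H)(r)=H(r)=H(q)$, contradicting maximality of $q$ unless $H(r)>H(q)$. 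This proves the third line.

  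\emph{(ii) Local property.} Suppose $\overline R F_0(p)<H(p)$. By continuity of $H$ there is $\eps>0$ with $\overline R F_0(p)<H$ on $(p-\eps,p+\eps]$, shrinking $\eps$ if needed. Write $\lambda:=\overline R F_0(p)$ and let $q<p$ be the maximal minimizer as in (i); then $q<p-\eps$ because $H>\lambda=H(q)$ on $(q,p]$ forces $q$ to lie to the left of the whole interval where $H>\lambda$, and in particular $q\le p-\eps$ cannot hold with equality for small $\eps$—more precisely $q$ does not belong to $(p-\eps,p+\eps]$. For any $r\in(p-\eps,p+\eps]$ we have, since $F_0$ is non-increasing and $q<r$, that $\overline R F_0(r)=\inf_{s\le r}(F_0\vee H)(s)\le (F_0\vee H)(q)=H(q)=\lambda$; conversely $\overline R F_0(r)=\min\{\inf_{s\le q}(F_0\vee H)(s),\inf_{s\in[q,r]}(F_0\vee H)(s)\}$, the first term equals $\lambda$ and the second is $\ge \min_{[q,r]}H\ge$ the value already attained $=\lambda$ by the third line of (i) extended to the slightly larger interval. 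Hence $\overline R F_0\equiv\lambda$ on $(p-\eps,p+\eps]$.
\end{proof}

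The routine but slightly delicate point — and the one I would flag as the main obstacle — is the careful bookkeeping of \emph{which} endpoint is extremal and why the continuity argument at the "boundary" point $q$ (where $F_0(q)=H(q)$) produces the stated strict inequality $H>H(q)$ on the half-open interval; this is exactly the mirror of the argument already carried out in the proof of Lemma~\ref{lem:opti-loc}, so in the final text I would simply write "the proof is identical to that of Lemma~\ref{lem:opti-loc} after replacing $(\wedge,\ge,\sup_{q\ge p})$ by $(\vee,\le,\inf_{q\le p})$" rather than reproducing all of it.
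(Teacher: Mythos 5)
Your proof is correct and is essentially the paper's own argument: the paper proves Lemma~\ref{lem:opti-loc} in detail and simply states that the proof of Lemma~\ref{lem:opti-locb} is very similar, which is exactly the mirrored transcription you carry out (the literal reflection $p\mapsto -p$ indeed fails, as you note, since it flips the monotonicity of $F_0$ and the direction of semi-coercivity). Two cosmetic points: in (i) the contradiction is with the \emph{maximality of $q$} rather than with the minimality of the value (the crossing point $q'$ achieves a value $\le$ the minimum, hence is another minimizer to the right of $q$; the strict inequality $F_0(q')<F_0(q)$ need not hold and is not needed), and in (ii) the hypothesis $F_0(p)\le H(p)$ required to invoke (i) should be justified by $F_0(p)\le \overline R F_0(p)<H(p)$, which follows from Lemma~\ref{lem:relax-prop-1}.
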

As an immediate consequence of Lemmas \ref{lem:opti-loc} and \ref{lem:opti-locb} (using moreover definition (\ref{eq:defi-relax})), we get
\begin{cor}[Local properties of $\mathfrak R F_0$]
  \label{cor::14}
If \(\mathfrak R F_0(p)\not= H(p),\)
then \(\mathfrak R F_0\) is  constant in a neighbourhood of $p$. 
\end{cor}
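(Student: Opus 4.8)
The plan is to derive Corollary~\ref{cor::14} as a direct consequence of the local properties established in Lemmas~\ref{lem:opti-loc} and~\ref{lem:opti-locb}, treating the two cases $\mathfrak R F_0(p) > H(p)$ and $\mathfrak R F_0(p) < H(p)$ separately and matching them with the appropriate branch of the definition~\eqref{eq:defi-relax}.

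First I would fix $p \in \R$ with $\mathfrak R F_0(p) \neq H(p)$, and split into two cases. Suppose $\mathfrak R F_0(p) > H(p)$. By the equivalent formulation of $\mathfrak R F_0$ recorded in Step~1 of the proof of Lemma~\ref{lem:relax-op} (and by the inequalities of Lemma~\ref{lem:relax-prop-1}, which give $F_0 \wedge H \le \underline R F_0$ and $\overline R F_0 \le F_0 \vee H$), the strict inequality $\mathfrak R F_0(p) > H(p)$ forces $F_0(p) > H(p)$; indeed if $F_0(p) \le H(p)$ then $\mathfrak R F_0(p) = \overline R F_0(p) \le (F_0 \vee H)(p) = H(p)$, a contradiction. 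Hence $\mathfrak R F_0 = \underline R F_0$ at $p$, and moreover $\underline R F_0(p) = \mathfrak R F_0(p) > H(p)$. Lemma~\ref{lem:opti-loc}\ref{l:local} then yields an $\eps > 0$ such that $\underline R F_0$ is constant on $[p-\eps, p+\eps)$. It remains only to upgrade this to a genuine neighbourhood of $p$ on which $\mathfrak R F_0$ itself is constant: shrinking $\eps$ if necessary and using continuity of $F_0$ and $H$, we keep $F_0 > H$ on a neighbourhood of $p$ (since $F_0(p) > H(p)$), so that $\mathfrak R F_0 = \underline R F_0$ there; combined with the constancy of $\underline R F_0$ on $[p-\eps, p+\eps)$ this gives constancy of $\mathfrak R F_0$ on a (one-sided-closed) interval, and since $\mathfrak R F_0$ is continuous the value extends to a full open neighbourhood of $p$.

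The case $\mathfrak R F_0(p) < H(p)$ is entirely symmetric: the strict inequality forces $F_0(p) < H(p)$, hence $\mathfrak R F_0 = \overline R F_0$ near $p$ with $\overline R F_0(p) < H(p)$, and Lemma~\ref{lem:opti-locb}\ref{l:localb} provides an $\eps > 0$ with $\overline R F_0$ constant on $(p-\eps, p+\eps]$. The same localization-plus-continuity argument as above then shows $\mathfrak R F_0$ is constant in a neighbourhood of $p$.

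I do not expect any real obstacle here, since the two feeding lemmas do all the substantive work; the only point requiring a little care is the bookkeeping in converting the one-sided statements ``constant on $[p-\eps,p+\eps)$'' and ``constant on $(p-\eps,p+\eps]$'' into the two-sided claim ``constant in a neighbourhood of $p$.'' This is handled cleanly by invoking continuity of $\mathfrak R F_0$ (established in Lemma~\ref{lem:relax-op}): a function that is continuous everywhere and constant on a half-open interval containing $p$ in its interior-minus-one-endpoint is automatically constant on the closed interval, hence on an open neighbourhood. The matching of the sign of $\mathfrak R F_0(p) - H(p)$ with the correct branch $\underline R$ or $\overline R$ of the definition~\eqref{eq:defi-relax} is the other small step, but it is immediate from the inequalities in~\eqref{eq::1} of Lemma~\ref{lem:relax-op}.
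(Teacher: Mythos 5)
Your proposal is correct and follows exactly the route the paper intends: the paper states Corollary~\ref{cor::14} as an immediate consequence of Lemmas~\ref{lem:opti-loc} and~\ref{lem:opti-locb} together with definition~\eqref{eq:defi-relax}, which is precisely the case split you carry out (with the minor observation that the final ``upgrade'' step is not even needed, since $[p-\eps,p+\eps)$ already contains the open neighbourhood $(p-\eps,p+\eps)$ of $p$).
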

We only do the proof of Lemma~\ref{lem:opti-loc} since the proof of Lemma~\ref{lem:opti-locb} is very similar. 
\begin{proof}[Proof of Lemma~\ref{lem:opti-loc}]
The proof is split in two steps. 
  
\noindent {\sc Optimality properties.}
We assume that \(F_0(p)\ge H(p)\) and $q\ge p$ is minimal such that 
\[\underline  R F_0(p)=(F_0 \wedge H)(q).\]

Using the coercivity of $H$ and the monotonicity of $F_0$, let us define $q_0 \in [p,+\infty)$ such that
\[q_0:=\sup \left\{q'\ge p,\quad F_0\ge  H  \text{ in } [p,q']\right\}.\]
It satisfies $F_0(q_0)=H(q_0)= \underline R F_0 (q_0)$ (see Lemma~\ref{lem:relax-prop-1}) and $q_0\ge p$.

We observe first that $q \in [p,q_0]$. Indeed, 
\begin{align*}
  \underline R F_0 (p) & = \max \left( \max_{q' \in [p,q_0]} H (q'), \underline R F_0 (q_0)  \right) \\
& = \max \left( \max_{q' \in [p,q_0]} H (q'), H (q_0)  \right) \\
& = \max_{q' \in [p,q_0]} H (q').
\end{align*}
We thus conclude that the maximum is reached for $q' \in [p,q_0]$ and since $q$ is minimal, we get $q \in [p,q_0]$. 

The fact that $q \le q_0$ implies that $F_0(q) \ge H(q)$.

Since $H(q) = \max_{[p,q_0]} H$, we also get from the minimality of $q$ that $H < H(q)$ in $[p,q)$. 

To finish with, monotonicity of $\underline RF_0$ implies that for any $q' \in [p,q)$, 
\[ \underline R F_0 (q) \le \underline R F_0 (q') \le \underline R F_0 (p) = (F_0 \wedge H) (q) = H(q) \le \underline R F_0 (q).\]
This series of inequalities yields that $\underline R F_0$ is constant, equal to $H(q)$.
\bigskip

\noindent {\sc Local properties.}
Keeping in mind that $(F_0 \wedge H) \le \underline R F_0 \le F_0$, if
\(\underline R F_0(p)> H(p)\)
then
\( F_0(p)\ge \underline R F_0(p)=H(q)> H(p),\) with $q$ defined above.
This implies that $q>p$  and so $\underline RF_0$ is constant in $[p,q]$. Using the monotonicity of $F_0$ and the continuity of $H$, we get also that there exists $\eps>0$ such that
$$H<H(q)\le F_0\quad {\rm on}\; [p-\eps, p].$$
Using the monotonicity of $\underline RF_0$, this implies, for all $p'\in [p-\eps,p]$, that
$$
\underline R F_0(q)\le \underline R F_0(p')=\max(\sup_{q'\in [p',p]}(F_0\wedge H)(q'), \underline R F_0(p))\le \max (H(q), \underline RF_0(q))=\underline RF_0(q).
$$
Hence $\underline R F_0$ is constant in $[p-\eps,q]$, with $q>p$. This yields the desired local property.
\end{proof}

\begin{lem}[Commutation of max/min with $\mathfrak R$]\label{lem::r90}
Assume that $H$ is continuous and coercive. If $F_a,F_b$ are continuous non-increasing, then
$$\mathfrak R(F_a\wedge F_b)=(\mathfrak R F_a)\wedge (\mathfrak R F_b)\quad \text{and}\quad 
\mathfrak R(F_a\vee F_b)=(\mathfrak R F_a) \vee (\mathfrak R F_b).$$
\end{lem}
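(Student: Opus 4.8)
**The plan is to prove the two identities by tracking how $\mathfrak R$ is built out of $\underline R$ and $\overline R$ and by exploiting the fact that both $\underline R$ and $\overline R$ already commute with $\wedge$ and $\vee$ in a suitable one-sided way.** The key observation is that $\underline R F(p) = \sup_{q \ge p}(F \wedge H)(q)$ is visibly monotone in $F$ and satisfies $\underline R(F_a \wedge F_b) = (\underline R F_a) \wedge (\underline R F_b)$ because $\sup_{q\ge p}$ commutes with $\wedge$ applied to the two functions $F_a \wedge H$ and $F_b \wedge H$ (the sup of a min is \emph{not} the min of sups in general, but here the inner function $\underline R F_a(p)$ is itself non-increasing, so the envelope structure saves us — this needs a short argument). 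Dually $\overline R(F_a \vee F_b) = (\overline R F_a)\vee(\overline R F_b)$. So the first step is to establish these one-sided commutation facts for $\underline R$ and $\overline R$ separately, which are elementary sup/inf manipulations.

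\textbf{Next I would use the representation $\mathfrak R F = \overline R(\underline R F)$ (equivalently $\underline R(\overline R F)$) proved in Lemma~\ref{lem:relax-op}.} For the $\wedge$ identity, write $\mathfrak R(F_a \wedge F_b) = \overline R\big(\underline R(F_a \wedge F_b)\big) = \overline R\big((\underline R F_a)\wedge(\underline R F_b)\big)$ using the commutation of $\underline R$ with $\wedge$. Now $G_a := \underline R F_a$ and $G_b := \underline R F_b$ are continuous, non-increasing, and self-relaxed under $\underline R$, with $\mathfrak R G_a = \mathfrak R F_a$ and $\mathfrak R G_b = \mathfrak R F_b$ (again by Lemma~\ref{lem:relax-op}). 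So it remains to show $\overline R(G_a \wedge G_b) = \mathfrak R(G_a \wedge G_b)$ coincides with $(\mathfrak R G_a)\wedge(\mathfrak R G_b)$; here the point is that once $\underline R$ has already been applied, the second application of $\overline R$ does not spoil the minimum. The cleanest route is a pointwise case analysis at a fixed $p$: say $G_a(p) \wedge G_b(p) = G_a(p)$. Then one compares $H(p)$ with $G_a(p)$ and with $G_b(p)$ and uses the local structure from Corollary~\ref{cor::14} and Lemmas~\ref{lem:opti-loc}, \ref{lem:opti-locb} (constancy of $\mathfrak R F$ near points where $\mathfrak R F \ne H$, and the fact that the value $\mathfrak R G(p)$ equals a value $H(q)$ of $H$ at some nearby optimal point $q$) to identify all three quantities $\mathfrak R(G_a \wedge G_b)(p)$, $\mathfrak R G_a(p)$, $\mathfrak R G_b(p)$. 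The $\vee$ identity is handled symmetrically using $\mathfrak R F = \underline R(\overline R F)$.

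\textbf{I expect the main obstacle to be the pointwise case analysis in the mixed regime}, i.e. at a point $p$ where, say, $F_a(p) \ge H(p)$ but $F_b(p) \le H(p)$ (or where the two relaxations pick out optimal points $q$ on opposite sides of $p$). In that situation $\mathfrak R F_a(p) = \underline R F_a(p)$ is governed by the behaviour of $H$ to the right of $p$, while $\mathfrak R F_b(p) = \overline R F_b(p)$ is governed by $H$ to the left, and one must check that taking $F_a \wedge F_b$ first and then relaxing produces exactly $\min$ of the two. The way I would discipline this is: reduce (via Lemma~\ref{lem:relax-op}) to $F_a, F_b$ already self-relaxed, so that on $\{F_a \ge H\}$ we have $F_a = \underline R F_a = \mathfrak R F_a$ and on $\{F_a \le H\}$ we have $F_a = \overline R F_a = \mathfrak R F_a$, and similarly for $F_b$; then $\mathfrak R F_a \wedge \mathfrak R F_b = F_a \wedge F_b$ on the set where this min is already self-relaxed, and on the remaining (small, by Corollary~\ref{cor::14}) region one invokes the optimality lemmas to see that relaxing $F_a \wedge F_b$ only modifies it within that region and to the common value. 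Monotonicity and continuity of all functions involved, together with coercivity of $H$, keep every supremum/infimum attained, so no compactness issue arises. The whole argument is a finite case check once the reductions are in place; the risk is purely bookkeeping, not conceptual.
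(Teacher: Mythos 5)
Your overall architecture is the right one and matches the paper's: write $\mathfrak R$ as a composition of $\underline R$ and $\overline R$ and commute each factor with $\wedge$ (resp.\ $\vee$). But the proposal has a hole at exactly the one point where an idea is required. The identity $\underline R(F_a\wedge F_b)=(\underline R F_a)\wedge(\underline R F_b)$ is a ``sup of a min equals min of sups'' statement, which is false for general functions; you flag this correctly but then wave at it (``the inner function $\underline R F_a$ is itself non-increasing, so the envelope structure saves us''), and that heuristic does not identify the actual mechanism. The inequality $\le$ is free by monotonicity of $\underline R$; for $\ge$ the paper picks the minimal optimal points $q_a^*,q_b^*\ge p$ with $\underline R F_c(p)=(F_c\wedge H)(q_c^*)$ and evaluates $(F_a\wedge F_b\wedge H)$ at $q^*:=q_a^*\wedge q_b^*$: since $q^*$ equals one of the two points, $H(q^*)\ge H(q_a^*)\wedge H(q_b^*)$, while the monotonicity of $F_a$ and $F_b$ \emph{themselves} (not of their relaxations) gives $F_c(q^*)\ge F_c(q_c^*)$. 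That single evaluation closes the gap, and nothing about $\underline R F_a$ being non-increasing enters. Without this step your proof of the first displayed identity does not start.

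The second issue is that you have made the remaining half of the argument much harder than it is. After reducing to $\overline R\bigl((\underline R F_a)\wedge(\underline R F_b)\bigr)$, what you need is that $\overline R$ commutes with $\wedge$ — and this is the \emph{trivial} direction, because
\begin{equation*}
\overline R (F_a\wedge F_b)(p)=\inf_{q\le p}\bigl((F_a\vee H)\wedge (F_b\vee H)\bigr)(q)
=\Bigl(\inf_{q\le p}(F_a\vee H)(q)\Bigr)\wedge\Bigl(\inf_{q\le p}(F_b\vee H)(q)\Bigr),
\end{equation*}
an exact lattice identity (an infimum of a pointwise minimum of two functions is always the minimum of the infima). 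Your proposed substitute — a pointwise case analysis in the ``mixed regime'' via Corollary~\ref{cor::14} and Lemmas~\ref{lem:opti-loc}--\ref{lem:opti-locb}, which you yourself name as the main obstacle and never carry out — is unnecessary. Symmetrically, for the $\vee$ identity the nontrivial commutation is $\overline R$ with $\vee$ (same optimal-point trick, mirrored) and the trivial one is $\underline R$ with $\vee$. So the correct division of labour is: one elementary sup/inf exchange and one optimal-point argument per identity; as written, your proposal supplies neither.
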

\begin{proof}
We only prove $\mathfrak R(F_a\wedge F_b)=(\mathfrak R F_a)\wedge (\mathfrak R F_b)$ (the proof of the other relation with the max is similar).

\noindent \textsc{Step 1: commutation of $\min$ with $\overline R$.}
We have
$$\overline R (F_a\wedge F_b)(p)=\inf_{(-\infty,p]} (F_a\wedge F_b)\vee H=\inf_{(-\infty,p]} (F_a\vee H)\wedge (F_b\vee H)=\left\{\inf_{(-\infty,p]} (F_a\vee H)\right\}\wedge \left\{\inf_{(-\infty,p]} (F_b\vee H)\right\}$$
i.e.
$$\overline R (F_a\wedge F_b)=(\overline R F_a)\wedge (\overline R F_b).$$

\noindent \textsc{Step 2: commutation of $\min$ with $\underline R$.}
We first notice that
$$\underline R(F_a\wedge F_b)\le \underline R F_a,\underline R F_b  \quad \text{i.e.}\quad\underline R(F_a\wedge F_b)\le (\underline R F_a)\wedge (\underline R F_b).$$
Now we want to prove the reverse inequality. For $c=a,b$, let $q_c^{*} \ge p$ be minimal  such that $\underline R F_c(p)=(F_c\wedge H)(q^*_c)$.
Setting
$$q^*:=q^*_a\wedge q^*_b,$$
we get using the monotonicities of $F_a,F_b$
$$H(q^*)\ge H(q^*_a)\wedge H(q^*_b),\quad F_a(q^*)\ge F_a(q^*_a),\quad F_b(q^*)\ge F_b(q^*_b).$$ 
Hence
$$\underline R (F_a\wedge F_b)(p)=\sup_{[p,+\infty)} F_a\wedge F_b\wedge H\ge (F_a\wedge F_b\wedge H)(q^*)\ge (F_a\wedge H)(q^*_a)\wedge (F_b\wedge H)(q^*_b)=(\underline R F_a)\wedge (\underline R F_b)(p)$$
which is the reverse inequality. Hence we conclude that
$$\underline R(F_a\wedge F_b)= (\underline R F_a)\wedge (\underline R F_b).$$

\noindent \textsc{Step 3: conclusion.}
From Steps 1 and 2, we deduce that $\mathfrak R = \underline R \overline R$ also satisfies the same equality.
\end{proof}

\subsection{Characteristic points}

The following definition is concerned by the characteristic points. These characteristic points will be usefull in particular to reduce the set of test function in the definition of viscosity solutions (see Subsection \ref{subsec:reduction})
\begin{defi}[Characteristic points]
  \label{defi:charac}
\begin{enumerate}[label=(\roman*)]
\item $p$ is a \emph{positive characteristic point} of $F_0$ if
  $H(p) = F_0 (p)$ and 
  $H>H(p)$ in $(p,p+\eps)$ for some $\eps >0$.
  The set of positive characteristic points is denoted by   $\chi^+(F_0)$.
\item $p$ is a \emph{negative characteristic point} of $F_0$ if   $H(p) = F_0 (p)$ and 
  $H< H(p)$ in $(p-\eps,p)$ for some $\eps>0$. 
The set of negative characteristic points is denoted by   $\chi^-(F_0)$.
\item The set of all \emph{characteristic points} is denoted by $\chi (F_0)$,
  \textit{i.e.}  \(\chi (F_0):=\chi^+ (F_0)\cup \chi^- (F_0)\).
\end{enumerate}
\end{defi}
We present some example of characteristic points in Figure \ref{F4}. We would like to point out that in the case $d)$, the intersection point is not a characteristic point for $F_0$. Nevertheless, we will use this notion of characteristic point with the relaxation of $F_0$. In that case the left point of the plateau is in $\chi^-(\mathfrak R F_0)$ and the right point is in $\chi^+(\mathfrak R F_0)$.
\begin{figure}[!ht]
\centering\epsfig{figure=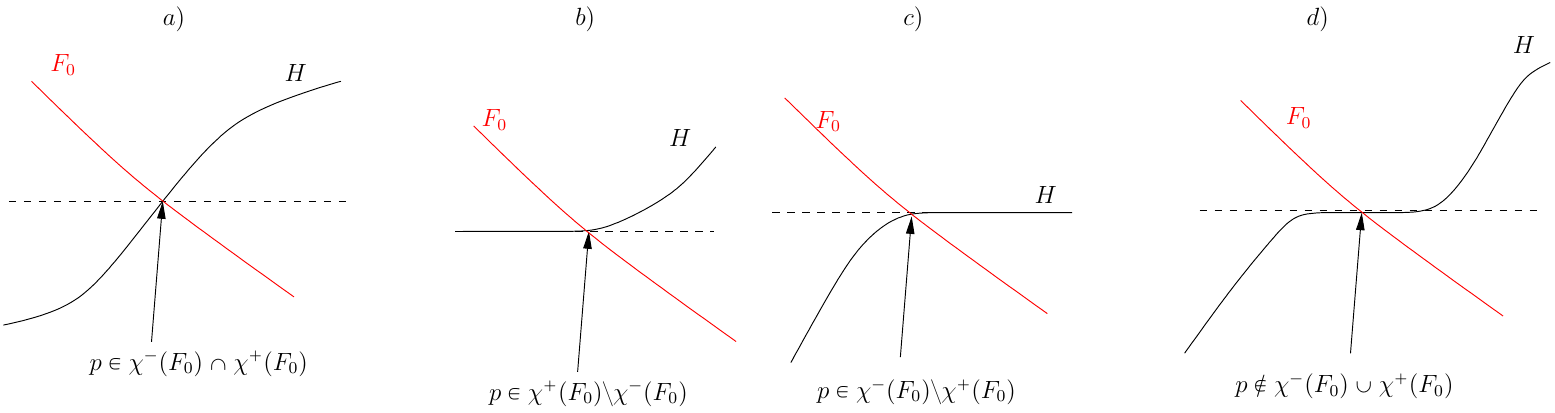,width=150mm}
\caption{Characteristic points of $F_0$ (along $H$)\label{F4}}
\end{figure}

In order to manipulate simply characteristic points, we use the notation introduced by J. Guerand in \cite{zbMATH06731793} and consider upper and lower points $p^\pm$ which only depend on $p$ and $H$.
The definition of $p^\pm$ is only related to the Hamiltonian $H$ while characteristic points give information about
the intersection of the graphs of $H$ and of $F_0$. An illustration of these points is given in Figure \ref{F5}
\begin{defi}[Upper and lower points]
Let $p \in \R$. 
  \begin{enumerate}[label=(\roman*)]
  \item If there exists $p_n \to p$ such that $p_n > p$ and $H(p_n) \le H(p)$, then the \emph{upper point}
    $p^+$ is  equal to $p$. If not, 
\[ p^+= \sup \left\{q>p :  H>H(p) \text{ in } (p,q)\right\}.\]
\item If there exists $p_n \to p$ such that $p_n < p$ and $H(p_n) \ge H(p)$, then the \emph{lower point}
    $p^-$ is  equal to $p$. If not, 
\[ p^-:= \inf \left\{q<p :  H<H(p) \text{ in }  (q,p)\right\}.\]
\end{enumerate}
\end{defi}
\begin{rem}
The coercivity of $H$ implies that \(-\infty<p^-\le p\le p^+\le +\infty.\)
\end{rem}

\begin{figure}[!ht]
\centering\epsfig{figure=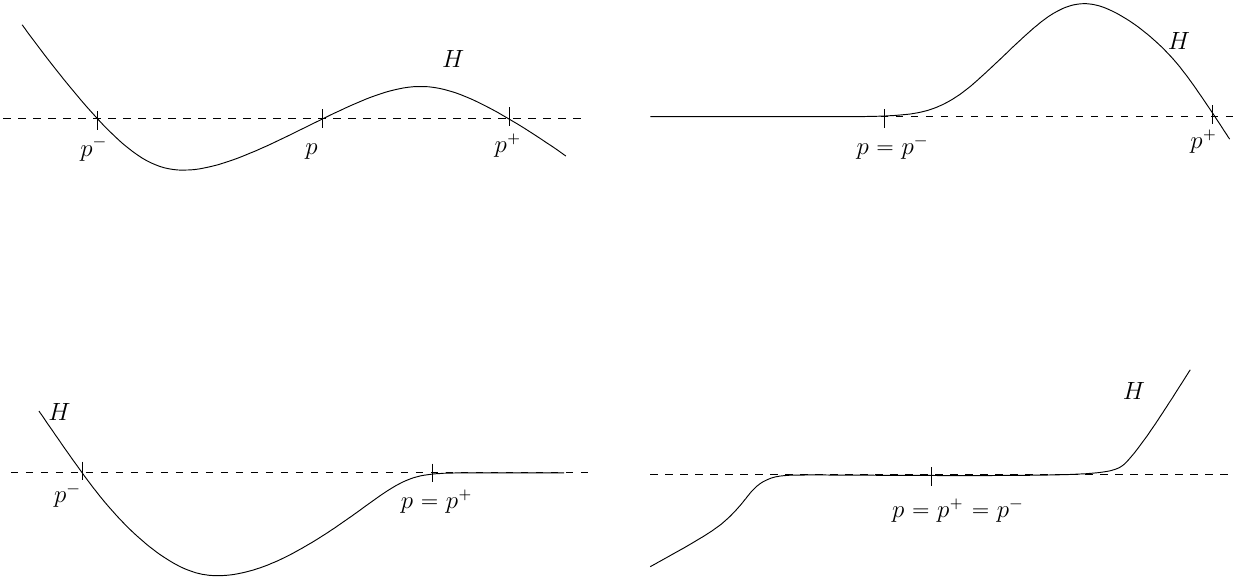,width=120mm}
\caption{Points $p^+$ and $p^-$ associated to $p$ (and $H$)\label{F5}}
\end{figure}

\begin{lem}[Characteristic points and relaxation operators]
  \label{lem:relax-prop}
Let $p \in \R$.
\begin{enumerate}[label=(\roman*)]  
\item If $p\in \chi^+(\overline R F_0)$, then  \(\overline R F_0\) is constant and equal to \(H(p)<H\) in \( (p,p^+)\). Moreover $H(p^+)=H(p)$ if $p^+<+\infty$.
\item If $p\in \chi^-(\underline R F_0)$, then \(\underline R F_0\) is constant and equal to \(H(p)=H(p^-)>H\) in \((p^-,p)\).
\end{enumerate}
\end{lem}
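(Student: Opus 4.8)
The statement relates characteristic points of the one-sided relaxed operators with the structure of $H$ near the upper/lower points $p^\pm$. I will prove item (i) (the claim about $p\in\chi^+(\overline R F_0)$) and note that item (ii) follows by the symmetric argument with the roles of $\overline R$, $\chi^+$, $p^+$, ``$\inf_{q\le p}$'' exchanged for $\underline R$, $\chi^-$, $p^-$, ``$\sup_{q\ge p}$''. The key input will be Lemma~\ref{lem:opti-locb} on the optimality structure of $\overline R F_0$, together with the definition of $p^+$.

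\textbf{Step 1: setting up.} Assume $p\in\chi^+(\overline R F_0)$. By Definition~\ref{defi:charac}(i) applied to $\overline R F_0$, this means $H(p)=\overline R F_0(p)$ and there is $\eps_0>0$ with $H>H(p)$ on $(p,p+\eps_0)$. In particular $p^+>p$, since by definition $p^+=\sup\{q>p: H>H(p)\text{ in }(p,q)\}$ and this set contains $(p,p+\eps_0)$. So on the open interval $(p,p^+)$ we have $H>H(p)$ by the very definition of $p^+$; this already gives the strict inequality $H(p)<H$ claimed on $(p,p^+)$.

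\textbf{Step 2: $\overline R F_0$ is constant equal to $H(p)$ on $(p,p^+)$.} Fix $p'\in(p,p^+)$. Since $\overline R F_0$ is non-increasing (Lemma~\ref{lem:relax-prop-1}) and $p'>p$, we have $\overline R F_0(p')\le\overline R F_0(p)=H(p)$. For the reverse inequality, I compute $\overline R F_0(p')=\inf_{q\le p'}(F_0\vee H)(q)$. Split the infimum over $q\le p$ and over $q\in[p,p']$. Over $q\le p$ the infimum is $\overline R F_0(p)=H(p)$. Over $q\in[p,p']$ we have $H(q)\ge H(p)$ for all such $q$ (equality only at $q=p$, strict on $(p,p']$ by Step~1), hence $(F_0\vee H)(q)\ge H(q)\ge H(p)$ there. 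Therefore $\overline R F_0(p')=\min\{H(p),\inf_{q\in[p,p']}(F_0\vee H)(q)\}=H(p)$. This proves $\overline R F_0\equiv H(p)$ on $(p,p^+)$, with $H>H(p)$ there, as claimed.

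\textbf{Step 3: the value at $p^+$ when $p^+<+\infty$.} Suppose $p^+<+\infty$. By definition of $p^+$ as the supremum of $q$ with $H>H(p)$ on $(p,q)$, finiteness means the supremum is not attained ``strictly beyond'' $p^+$; concretely, there is a sequence $q_n\downarrow p^+$ (or $q_n\to p^+$) witnessing $H(q_n)\le H(p)$, which by continuity of $H$ forces $H(p^+)\le H(p)$. On the other hand $p^+\in\overline{(p,p^+)}$ and $H>H(p)$ on $(p,p^+)$, so by continuity $H(p^+)\ge H(p)$. Hence $H(p^+)=H(p)$, the last assertion of item~(i). The main thing to be careful about here is the case distinction in the definition of $p^+$ (whether there exists $p_n\to p$, $p_n>p$ with $H(p_n)\le H(p)$): in that case $p^+=p$ and the interval $(p,p^+)$ is empty, but then $p\notin\chi^+(\overline R F_0)$ would fail the hypothesis $H>H(p)$ on a right neighbourhood, so this degenerate branch does not occur under our assumption and $p^+>p$ as used above.

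\textbf{Main obstacle.} The computations are elementary; the only delicate point is matching the two slightly different regimes in the definition of $p^+$ with the characteristic-point hypothesis, and making sure the infimum decomposition in Step~2 is over the correct closed/half-open intervals so that the value $H(p)$ is genuinely recovered and not merely bounded. Item~(ii) is then obtained verbatim by reflecting the argument: one writes $\underline R F_0(p')=\sup_{q\ge p}(F_0\wedge H)(q)$, uses that $\underline R F_0$ is non-increasing, splits the supremum at $p'$, and invokes $H<H(p)$ on $(p^-,p)$ from $p\in\chi^-(\underline R F_0)$ and the definition of $p^-$; continuity of $H$ at $p^-$ gives $H(p^-)=H(p)$.
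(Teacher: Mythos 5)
Your argument is correct and follows essentially the same route as the paper: split the defining infimum/supremum of the relaxed operator at $q=p$, use monotonicity of $\overline R F_0$ (resp.\ $\underline R F_0$) for one inequality and the sign of $H-H(p)$ on $(p,p^+)$ (resp.\ $(p^-,p)$) for the other — the paper simply writes out the mirror case (ii) and defers (i) to symmetry, whereas you do the opposite, and you additionally spell out the endpoint identity $H(p^+)=H(p)$ that the paper leaves implicit. The only blemish is a typo in your closing sketch of item (ii): the supremum there should be over $q\ge p'$ and split at $q=p$, not the other way around.
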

\begin{proof}
We only do the proof for negative characteristic points since the proof for positive ones is very similar.

Let $p \in \chi^-(\underline R F_0)$. Then $H(p) = \underline R F_0 (p)$, $p^-<p$ and $H < H(p)$ in $(p^-,p)$. For $p' \in (p^-,p)$, we then have
$H(p') < H (p ) =\underline R F_0(p) \le \underline R F_0 (p') \le F_0 (p')$. This implies
\begin{align*}
\underline R F_0 (p^-) = \max ( \sup_{[p^-,p]} H, \underline RF_0 (p)) = \underline R F_0 (p).
\end{align*}
Since $\underline RF_0$ is non-increasing, this yields the desired result. 
\end{proof}

\begin{cor}[Property of $\mathfrak R F_0$] \label{cor::11}
The function $\mathfrak R F_0$ satisfies
\[
  \mathfrak R F_0=\mathrm{constant} =H(p) \quad
  \begin{cases}
    \text{in } [p^-,p]& \quad \text{if}\quad p\in \chi^-(\mathfrak R F_0),\\
    \text{in } [p,p^+]\cap \R& \quad \text{if}\quad p\in \chi^+(\mathfrak R F_0).
  \end{cases}
\]
\end{cor}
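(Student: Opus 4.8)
The plan is to deduce Corollary~\ref{cor::11} directly from Lemma~\ref{lem:relax-prop} together with the identity $\mathfrak R(\mathfrak R F_0)=\mathfrak R F_0$ established in Lemma~\ref{lem:relax-op}. Write $F:=\mathfrak R F_0$; by Lemma~\ref{lem:relax-op} we have $\underline R F=F=\overline R F$, so in particular whenever the characteristic point condition is formulated for $F$ it is simultaneously a condition for $\underline R F$ and for $\overline R F$. The two bullets of Lemma~\ref{lem:relax-prop} are stated for $\overline R F_0$ and $\underline R F_0$ respectively, but since $F=\overline R F=\underline R F$, applying them with $F_0$ replaced by $F$ gives exactly the conclusions we want.

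Concretely, first I would treat the case $p\in\chi^-(\mathfrak R F_0)=\chi^-(F)$. Since $F=\underline R F$, we have $p\in\chi^-(\underline R F)$, so Lemma~\ref{lem:relax-prop}(ii) (with $F_0$ replaced by $F$) gives that $\underline R F$ — that is, $F$ — is constant and equal to $H(p)$ on $(p^-,p)$, with $H(p)=H(p^-)$. By continuity of $F$ and of $H$ (and since $H(p)=F(p)$ by definition of a characteristic point), this equality extends to the closed interval $[p^-,p]$; note $p^->-\infty$ always, by the coercivity remark, so $[p^-,p]\cap\R=[p^-,p]$ and there is no issue at the left endpoint. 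Second, for $p\in\chi^+(\mathfrak R F_0)=\chi^+(F)$, since $F=\overline R F$ we have $p\in\chi^+(\overline R F)$, and Lemma~\ref{lem:relax-prop}(i) (applied to $F$) shows $F$ is constant equal to $H(p)$ on $(p,p^+)$, with $H(p^+)=H(p)$ when $p^+<+\infty$. Again by continuity of $F$ and $H$ and the relation $F(p)=H(p)$, this extends to $[p,p^+]\cap\R$; here one must keep the intersection with $\R$ because $p^+$ may equal $+\infty$.

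The only genuinely substantive point to verify is that Lemma~\ref{lem:relax-prop} may legitimately be applied with $F_0$ replaced by $F=\mathfrak R F_0$. This requires that $F$ satisfy the standing hypothesis \eqref{a:HandF0}, which holds because Lemma~\ref{lem:relax-op} asserts that $\mathfrak R F_0$ is continuous, non-increasing and semi-coercive; and it requires the identities $\underline R(\mathfrak R F_0)=\mathfrak R F_0=\overline R(\mathfrak R F_0)$, which are precisely the first line of displayed identities in Lemma~\ref{lem:relax-op}. With these in hand the corollary is immediate, as the author already signals by calling it "an immediate consequence." I do not anticipate any real obstacle; the one place to be a little careful is the passage from the open interval in Lemma~\ref{lem:relax-prop} to the closed interval in the statement, which is handled by continuity together with the characteristic-point identity $F(p)=H(p)$ at the endpoint $p$ (and $H(p^\pm)=H(p)$ at the other finite endpoint).
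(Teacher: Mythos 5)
Your proof is correct. It rests on the same key lemma as the paper (Lemma~\ref{lem:relax-prop}), but you apply it in a different — and in fact more economical — way: you feed $F=\mathfrak R F_0$ itself into the lemma, which is legitimate because Lemma~\ref{lem:relax-op} guarantees both that $F$ satisfies the standing hypotheses (continuous, non-increasing, semi-coercive) and that $\underline R F=F=\overline R F$, so $\chi^\pm(F)=\chi^\pm(\overline RF)=\chi^\pm(\underline RF)$ and the lemma's conclusion reads directly as a statement about $F$. The paper instead uses the factorization $\mathfrak R F_0=\underline R F_1$ with $F_1=\overline R F_0$: it applies Lemma~\ref{lem:relax-prop} to $F_1$ only to extract the inequality $\underline R F_1>H$ on $(p^-,p)$, deduces $(p^-,p)\subset\{F_1>H\}\subset\{F_0>H\}$ so that $\mathfrak R F_0=\underline R F_0$ there, and then invokes the lemma a second time for $\underline RF_0$. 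Your single application via idempotence short-circuits this detour; what the paper's longer route buys is that it never needs to re-verify the hypotheses of Lemma~\ref{lem:relax-prop} for the relaxed function, since it only ever applies the lemma to sub/super-relaxations of the original data. Your handling of the endpoints (finiteness of $p^-$ from coercivity, possible $p^+=+\infty$, and the continuity extension to the closed interval) matches the paper and is complete.
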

\begin{rem}\label{rem::local}
In Corollary \ref{cor::11}, we only need $[p,p^+]\cap \R$ instead of $[p,p^+]$ in the special case where $p^+=+\infty$.
\end{rem}
\begin{proof}
We only do the proof for negative characteristic points since the proof for
positive ones is similar. 

Let $F_1 = \overline R F_0$. In particular $\mathfrak R F_0 = \underline R F_1$. 
If $p \in \chi^- (\mathfrak R F_0)=\chi^-(\underline R F_1)$, then Lemma~\ref{lem:relax-prop} implies that
for $p' \in (p^-,p)$, we have in particular $F_1 (p') \ge \underline R F_1 (p') > H(p')$.
This implies that  $(p^-,p) \subset \{ F_1 > H \}$. Moreover,
\[ \{ F_1 > H \} = \{ \overline R F_0 > H \} \subset \{ F_0 > H \} \]
since $\{F_0 \le H \} \subset \{\overline R F_0 \le H \}$ by Lemma~\ref{lem:relax-prop-1}. 
By definition of $\mathfrak R F_0$,
we have $\mathfrak R F_0 = \underline R F_0$ in $\{ F_0 > H \}$ and in particular in $(p^-,p)$.
We conclude by Lemma \ref{lem:relax-prop} that $\mathfrak R F_0$ is constant and equal to $H(p)$ in $(p^-,p)$. By continuity, we get the result in $[p^-,p]$.
\end{proof}

We also have another corollary of the previous results.
\begin{cor}[Values of $\mathfrak R F_0$ at its characteristic points] \label{cor::30}
We have $\mathfrak R F_0\le F_0$ in $\chi^- (\mathfrak R F_0)$
and $\mathfrak R F_0\ge F_0$ in $\chi^+ (\mathfrak R F_0)$. 
\end{cor}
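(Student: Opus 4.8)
The plan is to prove Corollary~\ref{cor::30} by treating the two cases symmetrically and using the optimality description of the relaxation operator together with Corollary~\ref{cor::11}. I will only write the argument for negative characteristic points, $p \in \chi^-(\mathfrak R F_0)$, since the case $p \in \chi^+(\mathfrak R F_0)$ follows by the analogous reasoning applied to $\overline R$ instead of $\underline R$ (or, more symmetrically, by replacing $H$ by $H(-\cdot)$). So fix $p \in \chi^-(\mathfrak R F_0)$; by definition this means $H(p) = \mathfrak R F_0(p)$ and $H < H(p)$ on $(p-\eps,p)$ for some $\eps>0$, and we must show $\mathfrak R F_0(p) \le F_0(p)$.

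First I would record that $\mathfrak R F_0(p) = H(p)$, so the claim reduces to $H(p) \le F_0(p)$, i.e. $p \in \{F_0 \ge H\}$. To see this, recall from the proof of Corollary~\ref{cor::11} that $p \in \chi^-(\mathfrak R F_0)$ forces $(p^-,p) \subset \{F_1 > H\}$ where $F_1 = \overline R F_0$, and since $\{F_0 \le H\} \subset \{\overline R F_0 \le H\}$ (Lemma~\ref{lem:relax-prop-1}, Remark~\ref{rem:relax-prop-1}) we get $(p^-,p) \subset \{F_0 > H\}$. As $p^- < p$, the interval $(p^-,p)$ is nonempty; pick any sequence $p_k \uparrow p$ with $p_k \in (p^-,p)$. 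Then $F_0(p_k) > H(p_k)$ for all $k$, and passing to the limit using continuity of both $F_0$ and $H$ yields $F_0(p) \ge H(p)$. Combining with $\mathfrak R F_0(p) = H(p)$ gives $\mathfrak R F_0(p) = H(p) \le F_0(p)$, which is exactly the desired inequality at $p$.

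For positive characteristic points the mirror argument runs as follows: if $p \in \chi^+(\mathfrak R F_0)$, then $\mathfrak R F_0(p) = H(p)$ and $H > H(p)$ on $(p,p+\eps)$; arguing as in the proof of Corollary~\ref{cor::11} with $\underline R F_0$ in place of $\overline R F_0$ (using $\{F_0 \ge H\} \subset \{\underline R F_0 \ge H\}$ does not directly help, so instead one uses that on $(p,p+\eps)$ one has $\underline R F_0 = \overline R F_0$-type reasoning), one obtains $(p,p^+)\cap\R \subset \{F_0 < H\}$, hence $F_0(p) \le H(p)$ by continuity, and therefore $\mathfrak R F_0(p) = H(p) \ge F_0(p)$.

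I do not expect a genuine obstacle here; the only point requiring a little care is making sure the inclusion $(p^-,p) \subset \{F_0 > H\}$ (respectively $(p,p^+)\cap\R \subset \{F_0 < H\}$) is correctly extracted from the bookkeeping already done inside the proof of Corollary~\ref{cor::11}, and that $p^- < p$ (respectively $p^+ > p$) genuinely holds — but this is immediate from the definition of characteristic points, since $H < H(p)$ strictly on a left neighbourhood of $p$ forces $p^- < p$, and symmetrically on the right. Everything else is continuity and the already-established identity $\mathfrak R F_0 = H$ at characteristic points.
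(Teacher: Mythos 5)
Your proof is correct and follows essentially the same route as the paper: both arguments reduce the claim to showing $(p^-,p)\subset\{F_0>H\}$ (the paper via monotonicity of $\mathfrak R F_0$ itself and Lemma~\ref{lem:relax-op}, you via $\overline R F_0$ and Lemma~\ref{lem:relax-prop-1}) and then conclude by continuity, using $\mathfrak R F_0(p)=H(p)$ at a characteristic point. The only blemish is the garbled parenthetical in your positive-point case: the inclusion $\{F_0\ge H\}\subset\{\underline R F_0\ge H\}$ (which follows from $\underline R F_0\ge F_0\wedge H$) \emph{does} directly give what you need in contrapositive form, namely $\{\underline R F_0<H\}\subset\{F_0<H\}$, so the mirror argument goes through without any extra device.
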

\begin{proof}
  We only do the proof for negative characteristic points since the proof for positive ones is similar.
  
Let \(F=\mathfrak R F_0\) and \( p\in \chi^- (F). \)
This means 
\[ H< F(p)=H(p) \text{ in } (p^-,p)\not=\emptyset .\]
Since $F$ is non-increasing, this implies that 
\[ H < F = \mathfrak R F_0  \text{ in } (p^-,p). \]
In other words, $(p^-,p) \subset \{ F > H \}$. 
Lemma~\ref{lem:relax-op} implies that $\{ F > H \} \subset \{F_0 > H\}$. Hence $(p^-,p)\subset \{F_0>H\}$. By continuity of $F_0$ and $H$, we then get $F_0(p)\ge H(p)$ and by Lemma \ref{lem:relax-op} $F_0(p)\ge \mathfrak R F_0(p)$.
\end{proof}

\section{Viscosity solutions: properties, stability and existence}
\label{s3}

In this section, time, space and tangential variables are not omitted anymore.
We first discuss the notion of viscosity solutions and then explain how to reduce
the set of test functions for verifying that a function is indeed a strong viscosity solution.
As an application, we get our first main result, see Theorem~\ref{t:main} in the introduction
and Theorem~\ref{th:weak-strong} below. 

\subsection{Definitions of weak  and strong viscosity solutions}

We consider two notions of  viscosity solutions for the boundary value problem \eqref{eq:HJ-bc}. Weak viscosity solutions
are useful to get existence since they are naturally stable. Strong viscosity solutions are useful to prove uniqueness.

Before defining weak and strong viscosity solutions of \eqref{eq:HJ-bc}, we recall that
a function $\varphi$ touches a function $u$ from above (resp. from below) in a set $Q$ at a point $P_0 \in Q$ if
$\varphi\ge u$ in $Q$ (resp. $\varphi\le u$ in $Q$) and $u = \varphi$ at $P_0$. We also recall
that if a function $u$ is locally bounded from below (resp. from above), then its lower semi-continuous envelope $u_*$ (resp. upper semi-continuous envelope $u^*$)
is the largest lower semi-continuous function lying below $u$ (resp. smallest upper semi-continuous function lying above $u$).

In order to define weak and strong viscosity solutions of the three boundary value problems \eqref{eq:HJ-bc}, \eqref{eq:HJ-neu}
and \eqref{eq:HJ-dir}, we consider a real-valued continuous function $L =L(t,x,v, p_0,p)$ such that
\begin{equation}
  \label{a:L}
  L \colon (0,+\infty) \times \Omega \times \R \times \R \times \R^d \to \R \text{ is non-decreasing in $v$, $p_0$ and
 $p \cdot n(x)$.}
\end{equation}
The associated boundary value problem is the following one,
\begin{equation}\label{eq:HJ-gen}
\begin{cases}
u_t + H(t,x,Du)=0, &  t>0, x \in \Omega,\\
L(t,x,u,u_t,Du)=0, & t>0, x \in \partial \Omega.
\end{cases}
\end{equation}
The corresponding functions $L$ are respectively
$L = p_0 + F_0 (t,x, p)$, $L = v -g(t,x)$ and $L = p \cdot n(x) + h(t,x)$. 

\begin{defi}[Weak viscosity solutions]  \label{defi:weak-visc-sol}
Let $Q =(0,+\infty) \times \Omega$ and $u: Q \to \R$. 
\begin{enumerate}[label=(\roman*)]
\item
Let  $u$ be upper semi-continuous. We say that $u$ is a \emph{weak $L$-subsolution} of \eqref{eq:HJ-bc}
if for any point $P_0=(t_0,x_0)\in Q$, and any $C^1$ function $\varphi$ touching $u$ from above, 
then
 \begin{align*}
   \text{ if } x_0 \in \Omega, &  \quad \varphi_t + H(t,x,D \varphi) \le 0 & \text{ at } P_0\\
   \text{ if } x_0 \in \partial \Omega, &  \quad \text{ either } \quad \varphi_t + H(t,x,D \varphi) \le 0 \quad \text{ or } \quad L(t,x,\varphi,\varphi_t,D \varphi) \le 0,& \text{ at } P_0.
 \end{align*}
\item 
Let  $u$ be lower semi-continuous. We say that $u$ is a \emph{weak $L$-supersolution} of \eqref{eq:HJ-bc}
if for any point $P_0=(t_0,x_0)\in Q$, and any $C^1$ function $\varphi$ touching $u$ from below, then
\begin{align*}
  \text{ if } x_0\in \Omega, & \quad \varphi_t + H(t,x,D\varphi) \ge 0,& \text{ at } P_0\\
\text{ if } x_0 \in \partial \Omega, & \quad \text{either} \quad \varphi_t + H(t,x,D\varphi) \ge 0\quad \text{ or }\quad L(t,x,\varphi,\varphi_t,D\varphi) \ge 0,& \text{ at } P_0.
\end{align*}
\item 
  Let $u$ be locally bounded. We say that $u$ is a \emph{weak $L$-solution} (weak viscosity solution) of \eqref{eq:HJ-bc}, if $u^*$ is a weak $L$-subsolution  of \eqref{eq:HJ-bc},
  and $u_*$ is a weak $L$-supersolution of \eqref{eq:HJ-bc}. 
  \end{enumerate}
\end{defi}

\begin{defi}[Strong viscosity solutions]  \label{defi:strong-visc-sol}
Let $Q:= (0,+\infty)\times \Omega$ and $u: Q \to \R$. 
\begin{enumerate}[label=(\roman*)]
\item
Let  $u$ be upper semi-continuous. We say that $u$ is a \emph{strong $L$-subsolution} of \eqref{eq:HJ-bc}
if for any point $P_0=(t_0,x_0)\in Q$, and any $C^1$ function $\varphi$ touching $u$ from above, 
then
 \begin{align*}
   \text{ if } x_0 \in \Omega, &  \quad \varphi_t + H(t,x,D \varphi) \le 0 & \text{ at } P_0\\
   \text{ if } x_0 \in \partial \Omega, & \quad  L(t,x,\varphi,\varphi_t,D \varphi) \le 0,& \text{ at } P_0.
 \end{align*}
\item 
Let  $u$ be lower semi-continuous. We say that $u$ is a \emph{strong $L$-supersolution} of \eqref{eq:HJ-bc}
if for any point $P_0=(t_0,x_0)\in Q$, and any $C^1$ function $\varphi$ touching $u$ from below, then
\begin{align*}
  \text{ if } x_0>0, & \quad \varphi_t + H(t,x,D\varphi) \ge 0,& \text{ at } P_0\\
\text{ if } x_0 =0, & \quad L(t,x,\varphi,\varphi_t,D\varphi) \ge 0,& \text{ at } P_0.
\end{align*}
\item 
  Let $u$ be locally bounded. We say that $u$ is a \emph{strong $L$-solution} (strong viscosity solution) of \eqref{eq:HJ-bc}, if $u^*$ is a strong $F_0$-subsolution of \eqref{eq:HJ-bc},
  and $u_*$ is a strong $F_0$-supersolution of \eqref{eq:HJ-bc}. 
  \end{enumerate}
\end{defi}
\begin{rem}
  In the case where $L = u_t + F_0(t,x,Du)$, weak/strong $L$-sub/super-solutions are simply called weak/strong $F_0$-sub/super-solutions.
\end{rem}

\subsection{Reducing the set of test functions}

\subsubsection{Critical normal slopes and weak continuity}

We consider the equation without the boundary condition,
\begin{equation}\label{eq:HJ}
u_t+H(t,x,Du)=0 \quad \text{ in } Q
\end{equation}
where we recall that $Q$ denotes $(0,+\infty) \times \Omega$ and $\Omega$ is a $C^1$ domain of $\R^d$.
The regularity of the domain amounts to assume  that for all $x_0 \in \partial \Omega$,  there exists $r_0>0$ such that
\begin{equation}
\label{e:pomega}  \Omega \cap B_{r_0} (x_0) = \{ (x',x_d) \in B_{r_0} (x_0) :  x_d > \gamma (x') \}
\end{equation}
for
some $C^1$ function $\gamma \colon \R^{d-1} \to \R$ such that $\gamma (x_0')=0$ and $D' \gamma (x_0')=0$ where $D'$
denotes the derivative with respect to $x'$.  In particular, $n(x_0) = (0,-1) \in \R^{d-1} \times \R$.
The following lemma is proved in  \cite{MR3695961} for Hamiltonians that do not depend on $(t,x)$
and that have convex sub-level sets. The reader can check that neither the $(t,x)$ dependency nor the quasi-convex assumption
play a role in the proof. 
\begin{lem}[Critical normal slope for supersolutions -- \cite{MR3695961}]
  \label{lem:relax-op0b}
  Assume that $H$ is continuous and coercive and $\partial \Omega$ is $C^1$.
  Let $u: Q \to \R$ be lower semi-continuous. Assume that $u$ is a viscosity supersolution of \eqref{eq:HJ}
  and let $\varphi$ be a test function touching $u$ from below at $P_0:=(t_0,x_0)$ with $t_0>0$
  and $x_0 \in \partial \Omega$. Let $\gamma$ be a $C^1$ function and $r_0>0$ such that \eqref{e:pomega} holds true.
  Then the \emph{critical normal slope} defined by
  \[
    \overline p:= \sup\left\{ p\in \R,\quad \exists r \in (0,r_0),\quad \varphi(t,x)+ p (x_d - \gamma(x'))  \le u(t,x)
      \quad \text{for all}\quad (t,x)\in B_r (t_0,x_0) \cap Q \right\}
  \]
  is non-negative. 
If it is finite ($\overline p<+\infty$) then $\varphi_t + H(t,x,D \varphi-\overline p n(x_0))\ge 0$ at $P_0$.
\end{lem}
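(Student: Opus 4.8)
\textbf{Proof plan for Lemma~\ref{lem:relax-op0b}.}
The plan is to treat the two assertions separately: first the non-negativity of $\overline p$, then the supersolution inequality at the threshold slope when $\overline p$ is finite. Throughout I will work in the local chart \eqref{e:pomega}, so that near $P_0$ the set $\Omega$ is the epigraph $\{x_d > \gamma(x')\}$ and $n(x_0)=(0,-1)$, and I will write $d(x) := x_d - \gamma(x')$ for the (signed, up to a $C^1$ change) boundary-defining function, which is $C^1$, vanishes on $\partial\Omega$ and is positive inside $\Omega$ near $P_0$, with $Dd(x_0) = (0,1) = -n(x_0)$.

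For the first assertion, I would argue that $p=0$ always belongs to the admissible set defining $\overline p$. Indeed $\varphi$ touches $u$ from below at $P_0$ in $Q$, so $\varphi \le u$ on a whole neighbourhood $B_r(t_0,x_0)\cap Q$, which is exactly the condition with $p=0$; hence $\overline p \ge 0$. (One should remark that the admissible set is an interval unbounded below, since decreasing $p$ only makes $\varphi + p\,d \le \varphi \le u$ easier inside $\Omega$ where $d>0$; so $\overline p$ is well defined in $(-\infty,+\infty]$ and is $\ge 0$.)

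For the second assertion, assume $\overline p < +\infty$. The idea is the classical one: the supremum defining $\overline p$ is ``attained in the limit'', so $\psi(t,x) := \varphi(t,x) + \overline p\, d(x)$ is a $C^1$ function that touches $u$ from below at $P_0$ in $Q$ \emph{and} cannot be pushed any further up in the normal direction. I would make this precise by taking a sequence $p_k \uparrow \overline p$ with corresponding radii $r_k$, producing test functions $\varphi + p_k d \le u$ on $B_{r_k}(t_0,x_0)\cap Q$; passing to the limit (after extracting so that $r_k \to r_\infty \ge 0$, and handling the case $r_\infty = 0$ by a diagonal/strict-inequality perturbation argument, e.g. subtracting $\eta(|t-t_0|^2+|x-x_0|^2)$ and letting $\eta\to 0$) gives that $\psi$ touches $u$ from below at $P_0$. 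Since $x_0 \in \partial\Omega$, the point $P_0$ is a boundary point and the plain supersolution property of \eqref{eq:HJ} is only required at interior points, so one cannot directly test with $\psi$ at $P_0$; instead I would either (a) invoke that $\psi$ touches $u$ from below at $P_0$ relative to $\overline{Q}$ and then slide the contact point into the interior, or (b) — the cleaner route — use the definition of $\overline p$ to find, for every $\delta>0$, interior points $(t_k,x_k)\in Q$ arbitrarily close to $P_0$ at which $\varphi + (\overline p + \delta) d - u$ has an interior local minimum (otherwise $\overline p + \delta$ would be admissible), write the supersolution inequality there, namely $\varphi_t + H(t_k,x_k, D\varphi + (\overline p+\delta) Dd(x_k)) \ge 0$, and then let $x_k \to x_0$ and $\delta \to 0$, using continuity of $H$, of $D\varphi$, of $Dd$, and $Dd(x_0) = -n(x_0)$, to conclude $\varphi_t + H(t_0,x_0, D\varphi - \overline p\, n(x_0)) \ge 0$ at $P_0$.

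The main obstacle is step (b): extracting genuine \emph{interior} test points from the failure of admissibility of $\overline p + \delta$, and controlling that the local minima of $\varphi + (\overline p+\delta)d - u$ occur at interior points converging to $x_0$ rather than escaping to the boundary of the ball or staying on $\partial\Omega$. This requires a careful localization: one should work on a fixed small ball $B_{r_0}(t_0,x_0)$, note that on its relative boundary $\partial(B_r \cap Q)$ one part lies in $\partial\Omega$ where $d=0$ so $\varphi + (\overline p+\delta)d - u = \varphi - u \le 0$ with the value $0$ only possibly at $P_0$, and the other part is interior to $\Omega$ where for $r$ small the strict positivity of $\varphi - u$ away from $P_0$ (obtained after the $\eta$-perturbation) dominates the $O(\delta)$ term; this forces the minimum of the perturbed function to be interior and near $P_0$. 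Once that topological bookkeeping is done, the passage to the limit is routine by continuity and coercivity is only used implicitly to guarantee $\overline p > -\infty$ is not needed — coercivity actually enters to ensure that the relevant quantities stay bounded, but for this statement the finiteness hypothesis $\overline p < +\infty$ is assumed, so the argument is essentially a quantitative version of the standard ``critical slope'' construction.
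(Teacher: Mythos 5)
First, a remark on the comparison: the paper does not actually prove Lemma~\ref{lem:relax-op0b}; it imports it from \cite{MR3695961} with the observation that the $(t,x)$-dependence and the lack of quasi-convexity do not affect the argument there. So your attempt has to stand on its own, and in my view it does not close. The first assertion ($\overline p\ge 0$ via $p=0$) is fine. The trouble starts with the preliminary claim that, by passing to the limit along $p_k\uparrow\overline p$, the function $\psi=\varphi+\overline p\,d$ touches $u$ from below at $P_0$: this is false in general because the supremum defining $\overline p$ need not be attained. Take $\Omega=(0,\infty)$, $\varphi\equiv 0$, $u(t,x)=x-x^2$: then $p$ is admissible iff $p<1$, so $\overline p=1$, yet $\varphi+\overline p\,d=x>u$ for every $x>0$; no perturbation by $\eta(|t-t_0|^2+|x-x_0|^2)$ can repair this, since the statement being ``recovered in the limit'' is simply wrong. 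Your route (b) does not rely on this claim, but it is presented as an established step. (There is also a sign slip in (b): you need local \emph{minima} of $u-\varphi-(\overline p+\delta)d$, i.e.\ local maxima of $\varphi+(\overline p+\delta)d-u$, to test the supersolution property.)

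The more serious gap is exactly the one you flag as ``the main obstacle'', namely forcing the minimum of $u-\varphi-(\overline p+\delta)d$ to be interior, and your proposed fix does not work. Centering on a fixed ball $B_r(P_0)$ and subtracting $\eta(|t-t_0|^2+|x-x_0|^2)$ from $\varphi$ fails on two counts: (i) the perturbed test function has a possibly strictly larger admissible set, so the non-admissibility of $\overline p+\delta$ --- which is what produces the negative interior infimum in the first place --- is no longer guaranteed, and one ends up proving the viscosity inequality at a slope that may exceed $\overline p$; (ii) quantitatively, on $\partial B_r\cap Q$ the lower bound available from the admissible slope $\overline p-\delta'$ is only $u-\varphi-(\overline p+\delta)d\ge -(\delta+\delta')\,d\gtrsim -(\delta+\delta')\,r$, an $O(\delta r)$ deficit against an $O(\eta r^2)$ gain, so the bookkeeping forces $\delta\lesssim\eta r$, which collides with (i). The working argument (the one in \cite{MR3695961} and \cite{MR3709301}) localizes differently: combining admissibility of $\overline p-\delta$ with non-admissibility of $\overline p+\delta$ produces witness points $(s,y)\to P_0$ with $d(y)>0$ and $-2\delta\,d(y)\le \bigl(u-\varphi-(\overline p+\delta)d\bigr)(s,y)<0$; one then minimizes $u-\varphi-(\overline p+\delta)d+\alpha\bigl(|x-y|^2+|t-s|^2\bigr)$ over the ball $\overline{B_R(s,y)}$ with $R\sim d(y)$ (so the ball sits inside $\Omega$ and $\partial\Omega$ never interferes) and $\alpha\sim \delta\,d(y)/R^2$, so that the penalization lifts the boundary values above $0$ while the gradient error $\alpha R\sim\delta$ vanishes as $\delta\to 0$. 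It is this calibration of the localization scale by $d(y)$, and of the penalization by the two-sided bound $-2\delta d(y)\le\Phi(s,y)<0$, that is missing from your sketch; without it the contact points can escape to the lateral boundary of the ball and the argument does not conclude.
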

\begin{rem}
  In the case where $\Omega$ is a half space
  (\textit{i.e.} when $\partial \Omega$ is a hyperplane) and the Hamiltonian is quasi-convex,
  this lemma is proved in \cite[Lemma~3.4]{MR3709301}. 
\end{rem}

We now get a similar result for subsolutions. In this case, the critical normal slope is necessarily finite. 
\begin{lem}[Critical normal slope for subsolutions -- \cite{MR3695961}]
  \label{lem:relax-op0}
  Assume that $H$ is continuous and coercive and $\partial \Omega$ is $C^1$.
  Let $u: Q \to \R$ be upper semi-continuous. Assume that $u$ is a viscosity supersolution of \eqref{eq:HJ}
  and let $\varphi$ be a test function touching $u$ from below at $P_0:=(t_0,x_0)$ with $t_0>0$
  and $x_0 \in \partial \Omega$. Let $\gamma$ be a $C^1$ function and $r_0>0$ such that \eqref{e:pomega} holds true.
  Then the \emph{critical normal slope} defined by
  \[
    \overline p:= \inf\left\{ p\in \R,\quad \exists r \in (0,r_0),\quad \varphi(t,x)+ p (x_d - \gamma(x'))  \ge u(t,x)
      \quad \text{for all}\quad (t,x)\in B_r (t_0,x_0) \cap Q \right\}
  \]
  is non-positive.  If 
\begin{equation}\label{eq::21}
u^*(t_0,0)=\limsup_{(s,y)\to (t_0,0),\quad y>0} u(s,y)
\end{equation}
then it is finite ($\underline p>-\infty$) and $\varphi_t + H(t,x,D \varphi-\underline p n(x_0))\le 0$ at $P_0$.
\end{lem}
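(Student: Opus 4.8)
\textbf{Proof plan for Lemma~\ref{lem:relax-op0}.}

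The plan is to follow closely the structure of the proof of Lemma~\ref{lem:relax-op0b}, but with inequalities reversed, and with the weak continuity condition \eqref{eq::21} playing the role that coercivity plays in the supersolution case. First, I would observe that the defining set for $\underline{p}$ is nonempty and that $\underline{p} \le 0$: since $\varphi$ touches $u$ from above at $P_0$ (note the statement says "from below" but, as in Lemma~\ref{lem:relax-op0b}, the roles are dictated by consistency — I would state it cleanly for $\varphi$ touching $u$ from above, which is the subsolution case), for any $p \ge 0$ the function $(t,x)\mapsto \varphi(t,x)+p(x_d-\gamma(x'))$ still lies above $u$ on $B_r(t_0,x_0)\cap Q$ because $x_d - \gamma(x') \ge 0$ there by \eqref{e:pomega}; hence $0$ belongs to the set and $\underline p \le 0$.

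Next, the heart of the matter is finiteness, i.e. $\underline p > -\infty$, and this is exactly where \eqref{eq::21} enters. I would argue by contradiction: if $\underline p = -\infty$, then for every $p \in \R$ (in particular very negative $p$) there is $r>0$ with $\varphi(t,x)+p(x_d-\gamma(x')) \ge u(t,x)$ on $B_r \cap Q$. Evaluating along a sequence $(s_k,y_k)\to(t_0,x_0)$ with $y_k \in \Omega$ realizing the $\limsup$ in \eqref{eq::21}, and using that $x_d - \gamma(x') > 0$ for $y_k$ interior while $\varphi$ is continuous, one gets $u^*(t_0,x_0) = \lim u(s_k,y_k) \le \varphi(t_0,x_0) + p \cdot 0 = \varphi(t_0,x_0)$; but this does not yet give a contradiction. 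The correct exploitation is more delicate: from $\varphi(t,x)+p(x_d-\gamma(x')) \ge u(t,x)$ one deduces, for interior points where $x_d - \gamma(x') > 0$, that $p \ge \frac{u(t,x)-\varphi(t,x)}{x_d-\gamma(x')}$, so letting $p\to-\infty$ forces $u(t,x)-\varphi(t,x) < 0$ near the boundary, i.e. $u < \varphi$ strictly at interior points, with a quantitative lower bound on $\varphi - u$ that degenerates at the boundary; combined with \eqref{eq::21} and the equation satisfied by $u$ in the interior, one derives a contradiction by a standard argument (pushing a contact point inside and using that $\varphi + p(x_d-\gamma)$ touches $u$ at an interior point for suitable $p$, where the interior supersolution property of $u$ forces a lower bound on $p$). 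This contradiction-and-localization step is the main obstacle, and it is precisely the point where \eqref{eq::21} is indispensable — without it, the counter-example~\ref{rem::32} shows the conclusion fails.

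Once $\underline p$ is known to be finite, I would establish the viscosity inequality $\varphi_t + H(t_0,x_0,D\varphi(P_0) - \underline p\, n(x_0)) \le 0$. By definition of the infimum, $\tilde\varphi(t,x) := \varphi(t,x) + \underline p\,(x_d - \gamma(x'))$ satisfies $\tilde\varphi \ge u$ on $B_r \cap Q$ for every $r$ small (taking the infimum is attained in the limit), and one checks that $\tilde\varphi$ touches $u^*$ from above at an interior point, or more precisely at $P_0$ after an approximation argument: for $p$ slightly larger than $\underline p$, $\varphi + p(x_d-\gamma)$ fails to lie above $u$, hence there are interior contact-or-crossing points converging to the boundary; at those interior points the subsolution inequality for $u$ gives $\partial_t(\varphi + p(\cdot)) + H(t,x,D(\varphi+p(x_d-\gamma))) \le 0$, and passing to the limit $p\downarrow\underline p$ together with the convergence of the contact points to $P_0$, using continuity of $H$ and that $D(x_d - \gamma) \to (0,\dots,0,1) = -n(x_0)$ at $x_0$ (since $D'\gamma(x_0')=0$), yields $\varphi_t(P_0) + H(t_0,x_0, D\varphi(P_0) - \underline p\, n(x_0)) \le 0$. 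The bookkeeping with the $C^1$ chart $\gamma$ and the normalization $n(x_0)=(0,-1)$ is routine given \eqref{e:pomega}, so I would not belabor it. This completes the proof.
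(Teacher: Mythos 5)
A preliminary remark: the paper itself contains no proof of this lemma; it is imported from \cite{MR3695961} (see also \cite[Lemma~3.4]{MR3709301}) with only the observation that the $(t,x)$-dependence and the quasi-convexity hypothesis play no role there. So your attempt can only be measured against the standard argument in those references. Your first step is fine ($0$ belongs to the defining set because $x_d-\gamma(x')\ge 0$ on $Q\cap B_{r_0}$, hence $\underline p\le 0$), and you correctly identify the two ingredients that must interact: weak continuity and the coercivity of $H$ through the interior subsolution inequality. But the two steps that carry the entire content of the lemma are left as acknowledged hand-waves, and one of them contains a directional error.

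On finiteness: you rightly note that evaluating $\varphi+p(x_d-\gamma)\ge u$ along a sequence realizing \eqref{eq::21} yields no contradiction, but your ``more delicate'' replacement does not yield one either. From $p\ge (u-\varphi)/(x_d-\gamma)$ with $p\to-\infty$ you only learn that $(u-\varphi)/(x_d-\gamma)\to-\infty$ near $P_0$ in $\Omega$, which is perfectly compatible with $u-\varphi\to 0$ along the sequence from \eqref{eq::21} (take $u-\varphi\sim-\sqrt{x_d-\gamma}$). The real mechanism is to use the interior points supplied by \eqref{eq::21} to manufacture a genuine interior maximum of $u$ minus a $C^1$ test function whose normal slope is arbitrarily negative, and only then does coercivity of $H$ in the subsolution inequality give the lower bound on $\underline p$. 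You name this step (``pushing a contact point inside\dots by a standard argument'') and call it ``the main obstacle'', but you do not carry it out; it is the heart of the lemma, not a routine citation.

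On the viscosity inequality: the defining set is upward closed (increasing $p$ only increases $\varphi+p(x_d-\gamma)$ on $Q$), so for $p$ slightly \emph{larger} than $\underline p$ the function $\varphi+p(x_d-\gamma)$ \emph{does} lie above $u$ near $P_0$; it is for $p$ slightly \emph{smaller} than $\underline p$ that it fails somewhere in every $B_r\cap Q$. Your direction is reversed. More seriously, at the resulting interior ``contact-or-crossing points'' you invoke ``the subsolution inequality for $u$'', but the viscosity inequality is available only where a $C^1$ function actually touches $u$ from above; a point where $u>\varphi+p(x_d-\gamma)$ is not such a point. Converting the two-sided information ($\varphi+(\underline p+\eps)(x_d-\gamma)\ge u$ near $P_0$, while $\varphi+(\underline p-\eps)(x_d-\gamma)$ is crossed arbitrarily close to $P_0$) into an interior maximum of $u$ minus a $C^1$ test function --- typically by replacing the linear term by a concave $C^1$ function of $x_d-\gamma(x')$ interpolating between the slopes $\underline p+\eps$ and $\underline p-\eps$ --- is exactly the localization you defer, and without it the proof does not close. (A last point of hygiene: $u$ is a subsolution here, the ``supersolution''/``from below'' in the statement being typos you already flagged, so your parenthetical ``interior supersolution property'' should read ``subsolution''.)
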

\begin{rem}
  In the case where $\Omega$ is a half space
  (\textit{i.e.} when $\partial \Omega$ is a hyperplane) and the Hamiltonian is quasi-convex,
  this lemma is proved in \cite[Lemma~3.4]{MR3709301}. 
\end{rem}
Notice that Condition \eqref{eq::21} is always satisfied for subsolutions of \eqref{eq:HJ-gen} when $H$ is coercive and $L$ is semi-coercive.
\begin{lem}[Weak continuity of weak subsolutions]
  \label{lem:weak-cont}
  Assume  that $H$ and $L$ are continuous, $H$ is coercive and
  $\lambda \mapsto L(t,x,v,p_0,p - \lambda n(x))$ is non-increasing and semi-coercive for all $(t,x,v,p_0,p)$,
  \[ \inf_{p' \perp n (x)} L(t,x,v,p_0,p' + \lambda n(x)) \to + \infty \text{ as } \lambda \to +\infty.\]
  If $u$ is a weak $L$-subsolution of \eqref{eq:HJ-gen}, then for all $t>0$, we have
\[u^*(t,x)=\limsup_{(s,y)\to (t,x),  y \in \Omega} u(s,y).\]
\end{lem}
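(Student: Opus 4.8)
The plan is to argue by contradiction, reducing the statement to a single boundary point and then sliding a test function against the boundary whose normal slope is forced to blow up. Since $u$ is upper semi-continuous, the identity is trivial when $x\in\Omega$, so I fix $(t_0,x_0)\in(0,+\infty)\times\partial\Omega$ and set $\ell:=\limsup_{(s,y)\to(t_0,x_0),\,y\in\Omega}u(s,y)$; one always has $\ell\le u(t_0,x_0)$, so I assume for contradiction that $u(t_0,x_0)=:m>\ell$ and put $\delta:=(m-\ell)/2>0$. I work in a $C^1$ chart around $x_0$ as in \eqref{e:pomega}, identifying $x_0$ with $0$ (so $\gamma(0)=0$, $D'\gamma(0)=0$, $n(x_0)=(0,-1)$), and introduce the $C^1$ function $\psi(x):=x_d-\gamma(x')$, which is nonnegative on $\Omega$, vanishes on $\partial\Omega$, and satisfies $D\psi(x_0)=-n(x_0)$ together with $(D'\gamma(y'),-1)=\sqrt{1+|D'\gamma(y')|^2}\,n(y)$ for $y\in\partial\Omega$. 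Then I pick $r\in(0,\min(r_0,t_0))$ and $\eps\in(0,\delta/16)$ so small that on $\overline{B_r(t_0,x_0)}$ one has $u\le m+\eps$ on $\overline\Omega$ (upper semi-continuity of $u$ at $(t_0,x_0)$), $u\le m-\delta$ at every point with $y\in\Omega$ (definition of $\ell$), and $|D'\gamma|\le 1/2$.

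Next I introduce the test functions. Fix a $C^1$ nondecreasing $\beta\colon[0,+\infty)\to[0,+\infty)$ with $\beta(s)=s$ for $s\le1$, $\beta\equiv2$ for $s\ge2$ and $0\le\beta'\le1$. For $C>0$ large I set $\rho_C:=\delta/(16C)$ and, on the compact set $K:=\overline{B_{r/2}(t_0,x_0)}\cap\bigl([0,+\infty)\times\overline\Omega\bigr)$, define
\[
\varphi_C(t,x):=m+\frac{\delta}{r^{2}}\Bigl(|t-t_0|^{2}+|x-x_0|^{2}\Bigr)-C\rho_C\,\beta\!\Bigl(\frac{\psi(x)}{\rho_C}\Bigr).
\]
The subtracted term equals $C\psi(x)$ on the slab $\{\psi\le\rho_C\}$, vanishes on $\partial\Omega$, and is at most $2C\rho_C=\delta/8$ everywhere. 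As $\varphi_C-u$ is lower semi-continuous on $K$ and $(\varphi_C-u)(t_0,x_0)=0$, it attains a minimum $\le0$ at some $P_C=(s_C,y_C)\in K$. The choices above give $\varphi_C-u\ge \delta/8-\eps>0$ on $\partial B_{r/2}(t_0,x_0)\cap K$ and $\varphi_C-u\ge 7\delta/8>0$ at every point with $y\in\Omega$ and $\psi(y)\ge\rho_C$. Hence $P_C$ lies in the open ball $B_{r/2}(t_0,x_0)$, and either $y_C\in\partial\Omega$, or $y_C\in\Omega$ with $\psi(y_C)<\rho_C$; in both cases $\varphi_C$ coincides near $P_C$ with $m+\frac{\delta}{r^{2}}(|t-t_0|^{2}+|x-x_0|^{2})-C\psi(x)$. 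A direct computation then gives $|\varphi_{C,t}(P_C)|\le\delta/r$ and $D\varphi_C(P_C)=e_C+C\sqrt{1+|D'\gamma(y_C')|^{2}}\,n(y_C)$ for some $e_C\in\R^d$ with $|e_C|\le\delta/r$, so that $|D\varphi_C(P_C)|\ge C-\delta/r$ and $D\varphi_C(P_C)\cdot n(y_C)\ge C-\delta/r$, both tending to $+\infty$.

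To conclude I apply the weak $L$-subsolution property at $P_C$ with the admissible test function $\varphi_C-(\varphi_C-u)(P_C)$. If $y_C\in\Omega$, the interior inequality $\varphi_{C,t}(P_C)+H(s_C,y_C,D\varphi_C(P_C))\le0$ is impossible for $C$ large by coercivity of $H$, since $(s_C,y_C)$ remains in a fixed compact set, $\varphi_{C,t}(P_C)$ is bounded, and $|D\varphi_C(P_C)|\to+\infty$. If $y_C\in\partial\Omega$, the same argument excludes $\varphi_{C,t}+H\le0$, so the boundary alternative holds: $L(s_C,y_C,u(s_C,y_C),\varphi_{C,t}(P_C),D\varphi_C(P_C))\le0$. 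Writing $D\varphi_C(P_C)=q_C'+\mu_C n(y_C)$ with $q_C'\perp n(y_C)$ and $\mu_C:=D\varphi_C(P_C)\cdot n(y_C)\to+\infty$, and abbreviating $z_C:=(s_C,y_C,u(s_C,y_C),\varphi_{C,t}(P_C))$, which stays in a compact set because $u$ is locally bounded, the semi-coercivity hypothesis on $L$ gives $L(z_C,D\varphi_C(P_C))\ge\inf_{p'\perp n(y_C)}L(z_C,p'+\mu_C n(y_C))\to+\infty$ as $C\to+\infty$, contradicting $L(z_C,D\varphi_C(P_C))\le0$. In all cases the assumption $m>\ell$ is impossible, which proves the lemma.

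I expect the delicate point to be the calibration of the test function $\varphi_C$: it must simultaneously stay above $u$ on the sphere $\partial B_{r/2}(t_0,x_0)$ and at interior points where $\psi$ is not small, so that the contact point $P_C$ is trapped in a slab of width $\rho_C\to0$ against the boundary, while its normal slope at $P_C$ blows up like $C$; this is exactly what the $C$-dependent cut-off width $\rho_C=\delta/(16C)$ delivers. A secondary technical point is that coercivity of $H$ and the semi-coercivity of $L$ are invoked uniformly over the compact set in which $(s_C,y_C)$ and the accompanying data range, which is why the latter hypothesis is phrased with an infimum over the tangential momentum $p'\perp n(x)$.
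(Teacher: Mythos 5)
Your argument is correct, but it takes a genuinely different route from the paper's. The paper dispatches this lemma in a few lines: it invokes the half-space case from \cite[Lemma~2.3]{MR3709301} (noting that quasi-convexity of $H$ plays no role there) and reduces the general $C^1$ domain to the half-space via the flattening $\bar u(t,x)=u(t,x',\gamma(x')+x_d)$, checking only that the transformed $\bar H$ stays coercive and $\bar L$ stays semi-coercive. You instead give a self-contained barrier argument directly in the curved domain: assuming $u(t_0,x_0)=m>\ell$, you trap the contact point of the sliding test function $\varphi_C$ in a slab of width $\rho_C=\delta/(16C)$ against $\partial\Omega$, force its normal slope to blow up like $C$, and rule out both alternatives of the weak subsolution inequality — the interior one by coercivity of $H$, the boundary one by the hypothesis $\inf_{p'\perp n}L(\cdot,p'+\lambda n)\to+\infty$. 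The sign bookkeeping ($D\varphi_C(P_C)\cdot n(y_C)\ge C-\delta/r\to+\infty$) is right, and in fact more is true than you claim: since $u\le m-\delta$ at \emph{every} interior point of the small ball while $\varphi_C\ge m-\delta/8$ everywhere, the contact point $y_C$ is necessarily on $\partial\Omega$, so the interior branch of your dichotomy is vacuous (the cut-off $\beta$ is still essential to keep the minimum from escaping to points where $\psi$ is of order $r$). Your approach buys independence from the external reference and from the change-of-variables computation; the paper's buys brevity and reuse of an established lemma.

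Three small repairs. First, a $C^1$ function with $\beta(s)=s$ for $s\le 1$, $\beta\equiv 2$ for $s\ge 2$ and $0\le\beta'\le 1$ does not exist: one would need $\int_1^2\beta'=1$ with $\beta'\le 1$ continuous and $\beta'(2)=0$. Replace it by, say, $\beta(s)=\int_0^s\min\bigl(1,(2-\sigma)_+\bigr)\,d\sigma$; only $\beta(s)=s$ near $0$, $0\le\beta\le 2$ and $\beta\in C^1$ are actually used. Second, the compactness of $u(s_C,y_C)$ should not be justified by ``$u$ is locally bounded'' (an upper semi-continuous function need not be bounded below); use instead $(\varphi_C-u)(P_C)\le 0$, which gives $u(P_C)\ge\varphi_C(P_C)\ge m-\delta/8$. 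Third, your limiting arguments along the sequence $(s_C,y_C,u(P_C),\varphi_{C,t}(P_C))$ implicitly read the coercivity of $H$ and the semi-coercivity of $L$ as locally uniform in the frozen variables; this is the same convention the paper itself relies on (e.g.\ in Lemmas~\ref{lem:relax-op0b} and~\ref{lem:relax-op0}), so it is acceptable here, but it is worth stating.
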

\begin{proof}
  In the case where $\Omega$ is a half-space, the result corresponds to \cite[Lemma~2.3]{MR3709301}.
  The reader can check that the convexity of sub-level sets of $H$ are not used in this proof and
  that the only needed assumptions are the ones from the statement.

  In the case where $\Omega$ is a $C^1$ domain, we consider $x_0 \in \partial \Omega$ and
  $r>0$ and a $C^1$ function $\gamma \colon \R^{d-1} \to \R$ such that \eqref{e:pomega} holds true.
  We reduce to the case of the half-space by considering the function $\bar u (t,x)$ defined by $\bar u (t,x) = u (t,x',\gamma(x') + x_d)$.
  It is a weak $\bar L$-subsolution of \eqref{eq:HJ-gen} in an open ball centered $(t_0,x_0',0)$ intersected with $\{x_d > 0\}$ with $\bar H$ and $\bar L$ given
  \begin{align*}
    \bar L (t,x',x_d,v,p_0,p',p_d)&= L (t,x',x_d+\gamma (x'),v,p_0,p' - D'\gamma (x') p_d, p_d) \\
    \bar H (t,x,p) &= H (t,x',x_d+\gamma (x'), p' - D'\gamma (x') p_d, p_d).
  \end{align*}
  One can choose $r>0$ such that $|D'\gamma (x')| <1/2$ in $B_r (t_0,x_0)$. With such a choice at hand,
  we have $|p' - D'\gamma (x') p_d|+ |p_d| \ge |p'|+\frac12 |p_d|$ and this ensures the coercivity of
  $\bar H$. Moreover, the assumption on $L$ implies that $\bar L$ is semi-coercive.
  The weak continuity of $\bar u$ at $(t_0,x'_0,0)$ implies the weak continuity of $u$ at $(t_0,x'_0,x^0_d)$. 
\end{proof}

\subsubsection{Reduction of the set of test functions}\label{subsec:reduction}

In the two following results, we do not assume that $F_0$ is semi-coercive.
\begin{pro}[Reducing the set of test functions for strong subsolutions]
  \label{pro:reduced}
Assume that $H,F_0$ satisfy \eqref{a:HandF0}. 
Let $u: Q \to \R$ be upper semi-continuous and be a subsolution of \eqref{eq:HJ} in $Q \cap B_r(t_0,x_0)$ with $x_0 \in \partial \Omega$
with $r$ and $\gamma$ such that \eqref{e:pomega} holds true. We assume that
\[u^*(t_0,x_0)=\limsup_{(s,y)\to (t_0,x_0), y \in \Omega} u(s,y).\]

We then consider the class of test functions of the form
\begin{equation}\label{eq::23}
\varphi(t,x)=\psi(t,x')+ p_d x_d
\end{equation}
with $\psi$ continuously differentiable in $(t,x')$ and $p_d$ a negative characteristic point of $q_d \mapsto \underline R F_0(t_0,x_0,p'_0,q_d)$  where $p'_0=D'\psi(t_0,x'_0)$.

If for any $\varphi$ of the form \eqref{eq::23} touching $u$ from above at $P_0=(t_0,x_0)$, we have
\[
\varphi_t + \underline R F_0(t,x,D \varphi)\le 0\quad \text{at}\quad P_0
\]
then $u$ is a strong $\underline R F_0$-subsolution of \eqref{eq:HJ-bc} at $P_0$.
\end{pro}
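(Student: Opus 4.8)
The plan is to show that every $C^1$ test function $\varphi$ touching $u$ from above at $P_0=(t_0,x_0)$ satisfies $\varphi_t+\underline R F_0(t,x,D\varphi)\le 0$ at $P_0$, by manufacturing from such a $\varphi$ a test function of the restricted class~\eqref{eq::23}, applying the assumption to it, and transferring the conclusion back. Reducing to the half-space case as in the proof of Lemma~\ref{lem:weak-cont} (a $C^1$ change of variables which preserves the coercivity of $H$, the subsolution property near $P_0$, the weak continuity assumption, and — since $D'\gamma(x_0')=0$ — both the class~\eqref{eq::23} and the relaxation operator at $P_0$), I would assume $\gamma\equiv 0$, so $\Omega=\{x_d>0\}$ locally and $n(x_0)=(0,-1)$. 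Write $a_0=\varphi_t(P_0)$, $p_0'=D'\varphi(P_0)$, $\bar p_d=\varphi_{x_d}(P_0)$, and set $\bar H(r)=H(P_0,p_0'-rn(x_0))$, $\bar F_0(r)=F_0(P_0,p_0'-rn(x_0))$; the inequality to prove at $P_0$ reads $a_0+\underline R\bar F_0(\bar p_d)\le 0$.

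First I would invoke the interior equation. Since $u$ is a subsolution of~\eqref{eq:HJ} near $P_0$ and is weakly continuous at $P_0$, Lemma~\ref{lem:relax-op0} yields a finite critical normal slope $\underline p\le 0$ with $a_0+\bar H(c)\le 0$, where $c:=\bar p_d+\underline p\le\bar p_d$; moreover, by the definition of $\underline p$, for every $p>\underline p$ one has $\varphi+p\,x_d\ge u$ on some neighbourhood of $P_0$ in $\Omega$ (the set of admissible slopes is upward closed because $x_d>0$ in $\Omega$). If $\underline R\bar F_0(c)\le\bar H(c)$, then monotonicity of $\underline R\bar F_0$ (Lemma~\ref{lem:relax-prop-1}) together with $c\le\bar p_d$ gives $\underline R\bar F_0(\bar p_d)\le\underline R\bar F_0(c)\le\bar H(c)$, hence $a_0+\underline R\bar F_0(\bar p_d)\le a_0+\bar H(c)\le 0$ and we are done. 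So I may assume $\underline R\bar F_0(c)>\bar H(c)$; then $\bar F_0(c)\ge\underline R\bar F_0(c)>\bar H(c)$, and Lemma~\ref{lem:opti-loc}\,(i) provides the minimal $q^*\ge c$ with $\underline R\bar F_0(c)=(\bar F_0\wedge\bar H)(q^*)$: it satisfies $q^*>c$ (because $\bar H(c)<\underline R\bar F_0(c)=\bar H(q^*)$), $\bar H<\bar H(q^*)$ on $[c,q^*)$, and $\underline R\bar F_0\equiv\bar H(q^*)$ on $[c,q^*]$; in particular $q^*$ is a negative characteristic point of $\underline R\bar F_0$.

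Next I would build a test function of the form~\eqref{eq::23}. Fix $\eta\in(0,q^*-c)$, pick $p\in(\underline p,\underline p+\tfrac{\eta}{2}]$ and $r_1>0$ with $\varphi+p\,x_d\ge u$ on $B_{r_1}(P_0)\cap\Omega$, and pick $r_2>0$ such that a first order Taylor expansion of $\varphi$ in the $x_d$ variable gives $\varphi(t,x)\le\varphi(t,x',0)+(\bar p_d+\tfrac{\eta}{4})x_d$ on $B_{r_2}(P_0)\cap\Omega$. Set $\psi_0(t,x'):=\varphi(t,x',0)$; it is $C^1$ with $D'\psi_0(P_0)=p_0'$, $\partial_t\psi_0(P_0)=a_0$ and $\psi_0(P_0)=u(P_0)$. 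Then on $B_{\min(r_1,r_2)}(P_0)\cap\Omega$,
\[
u(t,x)\ \le\ \varphi(t,x)+p\,x_d\ \le\ \psi_0(t,x')+\big(\bar p_d+\tfrac{\eta}{4}+p\big)x_d\ \le\ \psi_0(t,x')+(c+\eta)\,x_d .
\]
Since $q^*-c-\eta>0$ and $x_d>0$ in $\Omega$, the function $\varphi^*(t,x):=\psi_0(t,x')+q^*x_d$ lies above $u$ on that same neighbourhood and touches it at $P_0$; it is of the form~\eqref{eq::23} with $D'\psi_0(P_0)=p_0'$ and $p_d=q^*$ a negative characteristic point of $q_d\mapsto\underline R F_0(P_0,p_0',q_d)=\underline R\bar F_0(q_d)$. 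Applying the assumption to $\varphi^*$ gives $a_0+\underline R\bar F_0(q^*)\le 0$. To transfer back: if $\bar p_d\ge q^*$ then $\underline R\bar F_0(\bar p_d)\le\underline R\bar F_0(q^*)$ by monotonicity, while if $c\le\bar p_d<q^*$ then $\underline R\bar F_0(\bar p_d)=\bar H(q^*)=\underline R\bar F_0(q^*)$ because $\underline R\bar F_0$ is constant on $[c,q^*]$; in both cases $a_0+\underline R\bar F_0(\bar p_d)\le a_0+\underline R\bar F_0(q^*)\le 0$, so $u$ is a strong $\underline R F_0$-subsolution at $P_0$.

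The heart of the matter is the construction of $\varphi^*$: turning a general test function into an affine-in-$x_d$ one whose normal slope is \emph{exactly} a prescribed negative characteristic point, without losing the contact with $u$ at $P_0$. This works only because Lemma~\ref{lem:opti-loc} guarantees that the characteristic point $q^*$ lies strictly above the reduced slope $c$ and that $\underline R\bar F_0$ is flat on $[c,q^*]$: one first replaces $\varphi$ by an affine-in-$x_d$ function at the \emph{lower} slope $c$ (the Taylor estimate only allows an arbitrarily small slope increase $\eta$) and then adds the non-negative term $(q^*-c-\eta)x_d$ to reach the slope $q^*$ exactly; carrying out the two operations in this order is what avoids any limiting argument. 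The reduction to $\gamma\equiv 0$ and the properties of the critical normal slope used in the second paragraph are the other points requiring care, and they are provided respectively by Lemma~\ref{lem:weak-cont} and Lemma~\ref{lem:relax-op0}.
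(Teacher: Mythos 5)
Your proof is correct and follows essentially the same route as the paper's: obtain the critical normal slope $\underline p$ from Lemma~\ref{lem:relax-op0}, dispose of the case where $\underline R F_0$ at the shifted slope is dominated by $H$, otherwise use Lemma~\ref{lem:opti-loc} to locate the negative characteristic point $q^*$ at the right end of the plateau of $\underline R F_0$, build an affine-in-$x_d$ test function with normal slope exactly $q^*$ via the two-step (small Taylor increase, then add the nonnegative remainder $\times\, x_d$) construction, and transfer back by constancy and monotonicity. The only cosmetic differences are that you flatten the boundary first whereas the paper works directly with $x_d-\gamma(x')$, and that you invoke part (i) of Lemma~\ref{lem:opti-loc} where the paper uses part (ii) plus a short coercivity argument.
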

\begin{proof}
  Let $\phi$ be an arbitrary test function touching $u$ from above at $P_0=(t_0,x_0)$ with $t_0>0$.
  Let $\lambda:=-\phi_t(P_0)$. We want to show that
\begin{equation}\label{eq::24}
\underline R F_0(t,x,D\phi)\le \lambda\quad \text{at}\quad P_0.
\end{equation}

Let  $\underline p\in (-\infty,0]$ be given by Lemma \ref{lem:relax-op0}. In particular,
\(H(t,x,D\phi - \underline p n(x)) \le \lambda\) at $P_0$.
Let $D \phi (P_0)=(p'_0,p^0_d)$ and $\underline p_d^0:=p_d^0+\underline p$.
Let us drop the $(t_0,x_0,p'_0)$ dependency for clarity.
We thus know that $H(\underline p_d^0) \le \lambda$. 

If $\underline R F_0(p_d^0)\le H(\underline p_d^0)$,
then we get \eqref{eq::24}. 
We are left with treating the case $\underline R F_0(p_d^0) > H(\underline p_d^0)$.
In this case, we have  $\underline R F_0(\underline p_d^0)\ge \underline R F_0(p_d^0)> H(\underline p_d^0)$
and Lemma \ref{lem:opti-loc} implies that
$\underline R F_0$ is constant in  $[\underline p_d^0,\underline p_d^0+\varepsilon)$ for some $\eps >0$.
From the coercivity of $H$ and the monotonicity of $F$, we also deduce that there exists some $p_* > \underline p_d^0$ such that
\[ \underline R F_0= \mathrm{const} > H\quad \text{on}\quad [\underline p_d^0,p_*) \]
with $\mathrm{const}= \underline R F_0(p_*)=H(p_*)$. In other words,  $p_*$ is a negative characteristic point of $\underline R F_0$: $p_*\in \chi^-(\underline R F_0)$. 

We now write $p_* = \underline p_d^0 + \delta = p_d^0 + (\underline p + \delta)$ for some $\delta >0$. Moreover,
the definition of $\underline p$ from Lemma~\ref{lem:relax-op0} implies that  there exists $r_0>0$ such that
we have
\[\phi(t,x) + (\underline p + \delta/2) (x_d-\gamma (x')) \ge u(t,x) \text{ in } B_{r_0}(t_0,x_0) \cap Q.\]
 Moreover,
\[\phi (t,x) \le \phi (t,x',\gamma (x')) + (p_d^0 + \delta/2) (x_d-\gamma(x')) \text{ in } B_{r_1} (t_0,x_0) \cap Q\]
 for some $r_1 < r_0$. 
Hence,
\[\varphi(t,x):=\phi(t,x',\gamma (x')) + \underset{p_*}{\underbrace{(p_d^0+\underline p + \delta)}} (x_d-\gamma(x')) \ge u(t,x) \text{ in } B_{r_1} (t_0,x_0) \cap Q.\]
By assumption, we have 
\(\lambda\ge \underline RF_0(p_*)= \underline R F_0 (\underline p_d^0) \ge \underline R F_0(p_d^0)\)
which in turn yields \eqref{eq::24}. 
\end{proof}
As far as strong supersolutions are concerned, it is not necessary to impose a weak continuity assumption, and we show similarly the following result.
\begin{pro}[Reducing the set of test functions for strong supersolutions]
  \label{pro:reducedb}
Assume that $H,F_0$ satisfy \eqref{a:HandF0}. 
Let $u: Q \to \R$ be lower semi-continuous and be a viscosity supersolution of \eqref{eq:HJ} in $Q \cap B_r(t_0,x_0)$ with $x_0 \in \partial \Omega$
with $r$ and $\gamma$ such that \eqref{e:pomega} holds true. 

We then consider the class of test functions of the form
\begin{equation}\label{eq::23b}
\varphi(t,x)=\psi(t,x')+ p_d x_d
\end{equation}
with $\psi$ continuously differentiable and $p_d$ a positive characteristic point of $q_d \mapsto \overline R F_0(t_0,x_0,p'_0,q_d)$  where $p'_0=D'\psi(t_0,x'_0)$.

If for any $\varphi$ of the form \eqref{eq::23b} touching $u$ from below at $P_0=(t_0,x_0)$, we have
\[
\varphi_t + \overline R F_0(t,x,D \varphi)\ge 0\quad \text{at}\quad P_0
\]
then $u$ is a strong $\overline R F_0$-supersolution of \eqref{eq:HJ-bc} at $P_0$.
\end{pro}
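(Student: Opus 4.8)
The plan is to mirror the proof of Proposition~\ref{pro:reduced} for subsolutions, exploiting the fact that the critical normal slope $\overline p$ from Lemma~\ref{lem:relax-op0b} is automatically non-negative for supersolutions, which is exactly the sign needed to land inside a \emph{positive} characteristic plateau of $\overline R F_0$. Let $\phi$ be an arbitrary test function touching $u$ from below at $P_0 = (t_0,x_0)$ with $x_0 \in \partial \Omega$, and set $\lambda := -\phi_t(P_0)$. We must show $\overline R F_0(t_0,x_0,D\phi(P_0)) \ge \lambda$. Lemma~\ref{lem:relax-op0b} provides $\overline p \in [0,+\infty]$ such that, if finite, $H(t_0,x_0,D\phi(P_0) - \overline p\, n(x_0)) \ge \lambda$. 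Write $D\phi(P_0) = (p_0', p_d^0)$, freeze the $(t_0,x_0,p_0')$-dependence, and set $\overline p_d^0 := p_d^0 + \overline p$ (using $n(x_0) = (0,-1)$ as in \eqref{e:pomega}, so that subtracting $\rho n$ corresponds to increasing the $d$-th slope). Here one must first dispose of the case $\overline p = +\infty$: by coercivity of $H$ and the boundedness of $\overline R F_0$ below by $F_0 \wedge H$... actually $\overline R F_0 \ge F_0$, so if $\overline p = +\infty$ one still needs an argument; I would instead observe that $\overline p = +\infty$ forces $u$ to be identically $-\infty$ near $P_0$ in the normal direction, which is incompatible with $\phi$ touching $u$ from below — or more cleanly, reduce to finite $\overline p$ by the same approximation used in Lemma~\ref{lem:relax-op0b}.

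Assuming $\overline p < +\infty$, the case distinction runs as follows. If $\overline R F_0(p_d^0) \ge H(\overline p_d^0)$, then since $\overline R F_0$ is non-increasing and $\overline p_d^0 \ge p_d^0$, we must be slightly careful: we actually have $\overline R F_0(p_d^0) \ge \overline R F_0(\overline p_d^0)$, so this is not immediate. The correct split is: if $\overline R F_0(\overline p_d^0) \le H(\overline p_d^0)$, then $\overline R F_0(p_d^0) \ge \overline R F_0(\overline p_d^0)$... no. Let me restate: if $H(\overline p_d^0) \le \overline R F_0(p_d^0)$ we are done since $\lambda \le H(\overline p_d^0) \le \overline R F_0(p_d^0) = \overline R F_0(D\phi(P_0))$. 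So assume $\overline R F_0(p_d^0) < H(\overline p_d^0) \le H$ evaluated at... then in particular $\overline R F_0(\overline p_d^0) \le \overline R F_0(p_d^0) < H(\overline p_d^0)$, so $\overline R F_0(\overline p_d^0) < H(\overline p_d^0)$. By Lemma~\ref{lem:opti-locb}\ref{l:localb}, $\overline R F_0$ is constant on $(\overline p_d^0 - \eps, \overline p_d^0]$, and by the optimality part of Lemma~\ref{lem:opti-locb} together with coercivity of $H$ and monotonicity of $F_0$, there is a maximal such plateau: there exists $p_* \le \overline p_d^0$ with $\overline R F_0 = \mathrm{const} = \overline R F_0(p_*) = H(p_*)$ on $(p_*, \overline p_d^0]$ and $H < H(p_*)$... i.e. $p_* \in \chi^+(\overline R F_0)$ is a positive characteristic point, and $p_* < \overline p_d^0$ because $\overline p_d^0$ lies strictly inside the region $\{\overline R F_0 < H\}$ near it.

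The geometric step is then to realign the test function. Write $p_* = \overline p_d^0 - \delta = p_d^0 + (\overline p - \delta)$ for some $\delta > 0$. Since $\overline p - \delta < \overline p$, the definition of the critical normal slope in Lemma~\ref{lem:relax-op0b} gives an $r_0 > 0$ with $\phi(t,x) + (\overline p - \delta/2)(x_d - \gamma(x')) \le u(t,x)$ on $B_{r_0}(t_0,x_0) \cap Q$; and since $D'\gamma(x_0') = 0$, for some $r_1 < r_0$ we have $\phi(t,x) \ge \phi(t,x',\gamma(x')) + (p_d^0 + \delta/2)(x_d - \gamma(x'))$ on $B_{r_1}(t_0,x_0) \cap Q$. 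Adding these, the test function
\[
\varphi(t,x) := \phi(t,x',\gamma(x')) + \underbrace{(p_d^0 + \overline p - \delta)}_{=\,p_*}(x_d - \gamma(x'))
\]
touches $u$ from below at $P_0$ (after the standard modification making the contact strict, using weak lower semicontinuity, which is free here), it has the form \eqref{eq::23b} with normal slope $p_* \in \chi^+(\overline R F_0)$, and its $(t,x')$-part $\psi(t,x') := \phi(t,x',\gamma(x'))$ satisfies $D'\psi(t_0,x_0') = p_0'$. By hypothesis, $\varphi_t + \overline R F_0(t,x,D\varphi) \ge 0$ at $P_0$, i.e. $\lambda \le \overline R F_0(p_*)$. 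Finally, $\overline R F_0(p_*) = H(p_*) = \overline R F_0(\overline p_d^0) \le \overline R F_0(p_d^0)$ by monotonicity since $\overline p_d^0 \ge p_d^0$ — wait, this gives $\overline R F_0(p_*) = \overline R F_0(\overline p_d^0) \le \overline R F_0(p_d^0)$, hence $\lambda \le \overline R F_0(p_d^0) = \overline R F_0(D\phi(P_0))$, which is \eqref{eq::24} for supersolutions. The main obstacle, as in the subsolution case, is bookkeeping the half-space reduction via $\gamma$ (the sign conventions for $n(x_0) = (0,-1)$ and the $\pm$ in the slopes) and correctly invoking the optimality statement in Lemma~\ref{lem:opti-locb} to produce the characteristic point $p_*$; the $\overline p = +\infty$ case should be handled first by a short approximation argument.
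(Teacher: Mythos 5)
Your argument is the correct mirror image of the paper's proof of Proposition~\ref{pro:reduced}, which is exactly how the paper itself disposes of this statement (``we show similarly''). The case split ($\overline R F_0(p_d^0)\ge H(\overline p_d^0)$ versus $<$), the production of a positive characteristic point $p_*\in\chi^+(\overline R F_0)$ at the left end of the plateau via Lemma~\ref{lem:opti-locb}, the realignment of the test function to normal slope $p_*$, and the final chain $\lambda\le \overline R F_0(p_*)=\overline R F_0(\overline p_d^0)\le \overline R F_0(p_d^0)$ are all right. Two small repairs: to invoke the optimality part of Lemma~\ref{lem:opti-locb} you should first note that $\overline R F_0(\overline p_d^0)<H(\overline p_d^0)$ forces $F_0(\overline p_d^0)<H(\overline p_d^0)$ (Remark~\ref{rem:relax-prop-1}); and the second auxiliary inequality must read $\phi(t,x)\ge \phi(t,x',\gamma(x'))+(p_d^0-\delta/2)(x_d-\gamma(x'))$ (minus, not plus), so that the two half-slopes add up to $p_d^0+\overline p-\delta=p_*$ rather than to $\overline p_d^0$. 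You flag this as bookkeeping and it is.

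The one genuine gap is the case $\overline p=+\infty$, which — unlike the subsolution case, where weak continuity forces the critical slope to be finite — is not excluded by any hypothesis here, and Lemma~\ref{lem:relax-op0b} gives no viscosity inequality in that case. Both of your proposed fixes fail: $\overline p=+\infty$ does not force $u\equiv-\infty$ near $P_0$ (if anything the inequalities $\phi+p(x_d-\gamma)\le u$ push $u$ upward, and since the radius $r$ may shrink with $p$ there is no contradiction at all), and there is no ``approximation argument'' in Lemma~\ref{lem:relax-op0b} to reduce to finite $\overline p$. The correct treatment is direct: since the set of admissible slopes is downward closed, $\overline p=+\infty$ means \emph{every} $\rho\in\R$ satisfies $\phi+\rho(x_d-\gamma(x'))\le u$ near $P_0$. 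By coercivity of $H$ pick $q>p_d^0$ with $H(q)>\overline R F_0(p_d^0)\ge\overline R F_0(q)$; Lemma~\ref{lem:opti-locb} then produces $p_*\in\chi^+(\overline R F_0)$ with $\overline R F_0(p_*)=\overline R F_0(q)\le\overline R F_0(p_d^0)$ (or, if the plateau $[p_*,q]$ contains $p_d^0$, with $\overline R F_0(p_*)=\overline R F_0(p_d^0)$), and the test function with normal slope $p_*$ is admissible because all slopes are; the hypothesis then gives $\lambda\le\overline R F_0(p_*)\le\overline R F_0(p_d^0)$ as before. With this case added your proof is complete.
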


\subsection{Weak $F_0$-solutions are  strong $\mathfrak R F_0$-solutions}

\begin{lem}[Weak $F_0$ sub/super-solutions are strong $\underline R F_0/\overline RF_0$ sub/super-solutions]
  \label{lem:relax-op7}
\begin{enumerate}[label=(\roman*)]
\item Let $u: Q\to \R$ be upper semi-continuous.
Then  $u$ is a weak $F_0$-subsolution of \eqref{eq:HJ-bc} if and only if $u$  is a strong $\underline R F_0$-subsolution  of \eqref{eq:HJ-bc} .
\item \label{2romain} Let $u \colon Q \to \R$ be lower semi-continuous.
  Then $u$ is a weak $F_0$-supersolution  of \eqref{eq:HJ-bc}  if and only if $u$ is a strong $\overline R F_0$-supersolution  of \eqref{eq:HJ-bc}.
\end{enumerate}
\end{lem}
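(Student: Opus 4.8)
The plan is to prove (i) and note that (ii) is entirely analogous, so I focus on the subsolution case. The equivalence has two directions; the substantive one is ``weak $F_0$-subsolution $\Rightarrow$ strong $\underline R F_0$-subsolution'', and this is where Proposition \ref{pro:reduced} does the heavy lifting. First I would dispose of the easy direction: if $u$ is a strong $\underline R F_0$-subsolution, then at a boundary point $P_0$ any test function $\varphi$ touching $u$ from above satisfies $\varphi_t + \underline R F_0(t,x,D\varphi) \le 0$; since Lemma \ref{lem:relax-prop-1} gives $\underline R F_0 \ge F_0 \wedge H$, we get $\varphi_t + (F_0 \wedge H)(t,x,D\varphi) \le 0$, which is exactly the weak $F_0$-subsolution condition (either $\varphi_t + H \le 0$ or $\varphi_t + F_0 \le 0$ at $P_0$). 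In the interior there is nothing to check since both notions impose $\varphi_t + H(t,x,D\varphi) \le 0$ there.

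For the hard direction, suppose $u$ is a weak $F_0$-subsolution. Fix a boundary point $P_0 = (t_0,x_0)$ with $x_0 \in \partial\Omega$. Since a weak $F_0$-subsolution is in particular a viscosity subsolution of \eqref{eq:HJ} in $Q$ (the boundary alternative still forces $\varphi_t + H \le 0$ whenever the $F_0$-branch fails, and interior points are immediate), the hypotheses of Proposition \ref{pro:reduced} are met once we establish the weak continuity property $u^*(t_0,x_0) = \limsup_{(s,y)\to(t_0,x_0),\, y\in\Omega} u(s,y)$. This is supplied by Lemma \ref{lem:weak-cont} applied with $L = p_0 + F_0(t,x,p)$: one checks the monotonicity and the coercivity hypothesis $\inf_{p'\perp n} L(t,x,v,p_0, p' + \lambda n) \to +\infty$ as $\lambda \to +\infty$ follows from semi-coercivity of $F_0$. \emph{However}, the present lemma is stated without assuming $F_0$ semi-coercive, so I would instead argue directly: a weak $F_0$-subsolution is always weakly continuous because the boundary alternative, combined with coercivity of $H$, already controls large normal slopes — more precisely one invokes Lemma \ref{lem:relax-op0} (whose condition \eqref{eq::21} is the weak continuity we want) by the standard argument that if the weak continuity failed, an affine-in-$x_d$ test function with very negative normal slope would touch $u$ from above at $P_0$ and violate both $\varphi_t + H \le 0$ (by coercivity of $H$) and $\varphi_t + F_0 \le 0$ (since adding $-M(x_d - \gamma(x'))$ does not affect the tangential data but drives $D\varphi \cdot n \to +\infty$... actually this needs $F_0$ semi-coercive). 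The cleanest route, consistent with the paper's structure, is to observe that Proposition \ref{pro:reduced} is stated precisely to be applicable here, and to reduce to checking its hypothesis on the restricted test-function class.

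So the core of the argument is: by Proposition \ref{pro:reduced}, it suffices to verify that for every test function of the form $\varphi(t,x) = \psi(t,x') + p_d x_d$ with $p_d \in \chi^-(q_d \mapsto \underline R F_0(t_0,x_0,p'_0,q_d))$ touching $u$ from above at $P_0$, one has $\varphi_t + \underline R F_0(t_0,x_0,D\varphi(P_0)) \le 0$. Fix such a $\varphi$ and write $\lambda := -\varphi_t(P_0)$, $D\varphi(P_0) = p = p' - r_0 n(x_0)$ so that $r_0 = p_d$ in the normalized coordinates. Since $p_d \in \chi^-(\underline R \bar F_0)$ with $\bar F_0(q) := F_0(t_0,x_0,p' - q n)$, by Lemma \ref{lem:relax-prop} (ii) the function $\underline R \bar F_0$ is constant, equal to $H(t_0,x_0,p' - p_d n) = H(t_0,x_0, p_d^-\text{-value})$, on $(p_d^-, p_d]$, and $\bar H < \bar H(p_d)$ on $(p_d^-, p_d)$. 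Now the key move: for any $q \le p_d$ close to $p_d$, the modified test function $\tilde\varphi(t,x) = \psi(t,x') + q x_d$ (in normalized coordinates, shifting the normal slope) also touches $u$ from above at $P_0$ near the boundary — because decreasing the coefficient of $x_d$ only decreases $\varphi$ on $\{x_d \ge 0\} = \Omega$ locally (here one uses the $C^1$ flattening and that $x_d - \gamma(x') \ge 0$ in $\Omega \cap B_r$), while the value at $P_0$ is unchanged. Applying the weak $F_0$-subsolution condition to $\tilde\varphi$: either $\lambda \ge \bar H(q)$ or $\lambda \ge \bar F_0(q)$, i.e. $\lambda \ge (\bar F_0 \wedge \bar H)(q)$. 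Wait — the alternative gives $\lambda \ge \bar H(q)$ OR $\lambda \ge \bar F_0(q)$, so $\lambda \ge \min(\bar H(q), \bar F_0(q)) = (\bar F_0 \wedge \bar H)(q)$ is immediate. But we need $\lambda \ge \sup_{q \ge p_d}(\bar F_0 \wedge \bar H)(q)$, so I should use $q \ge p_d$ instead, via test functions with \emph{larger} normal slope — and the direction of the touching inequality must be re-examined: increasing the coefficient of $x_d$ increases $\tilde\varphi$ on $\Omega$, so $\tilde\varphi \ge \varphi \ge u$ still, and $\tilde\varphi = u$ at $P_0$. Good. Hence for every $q \ge p_d$, $\lambda \ge (\bar F_0 \wedge \bar H)(q)$, so $\lambda \ge \underline R \bar F_0(p_d) = \underline R F_0(t_0,x_0,D\varphi(P_0))$, which is \eqref{eq::24}.

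The main obstacle I anticipate is the bookkeeping around the $C^1$ flattening of $\partial\Omega$: one must be careful that modifying the normal-slope coefficient $p_d \mapsto q$ genuinely preserves the one-sided touching $\tilde\varphi \ge u$ in a neighborhood of $P_0$ \emph{inside} $\Omega$, which relies on $x_d - \gamma(x') \ge 0$ there and on the tangential derivative data $p'_0 = D'\psi(t_0,x'_0)$ being unchanged; this is exactly the computation already carried out inside the proof of Proposition \ref{pro:reduced}, so in fact the honest plan is to let that Proposition absorb this difficulty and only supply: (a) the easy converse direction above, (b) the verification that a weak $F_0$-subsolution is a viscosity subsolution of \eqref{eq:HJ} and satisfies the weak continuity hypothesis, and (c) the reduced check on characteristic test functions, which by the $q \ge p_d$ perturbation argument reduces to the trivial inequality $\lambda \ge (\bar F_0 \wedge \bar H)(q)$ coming directly from the definition of weak subsolution. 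Item (b)'s weak-continuity part is the genuinely delicate point when $F_0$ is not assumed semi-coercive; I would handle it by noting that the proof of Proposition \ref{pro:reduced} only uses weak continuity through Lemma \ref{lem:relax-op0}, and that for the restricted class of characteristic test functions the relevant normal slopes $p_d$ are automatically finite — alternatively, one simply adds the standing hypothesis, used throughout this subsection, that either $F_0$ is semi-coercive or $u$ is weakly continuous, exactly as flagged after Theorem \ref{t:main} in the introduction.
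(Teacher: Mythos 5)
Your easy direction (strong $\Rightarrow$ weak, via $\underline R F_0 \ge F_0 \wedge H$) matches the paper exactly. For the hard direction, however, the route you commit to --- reduce to characteristic test functions via Proposition~\ref{pro:reduced}, after establishing weak continuity via Lemma~\ref{lem:weak-cont} --- has a genuine gap that you yourself flag but do not close: Proposition~\ref{pro:reduced} requires the weak continuity condition $u^*(t_0,x_0)=\limsup_{(s,y)\to(t_0,x_0),\,y\in\Omega}u(s,y)$, which is \emph{not} a hypothesis of Lemma~\ref{lem:relax-op7} and cannot be derived from Lemma~\ref{lem:weak-cont} without assuming $F_0$ semi-coercive, which is also not assumed here. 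Your fallback of ``adding the standing hypothesis'' would weaken the statement; the lemma as stated needs no such hypothesis, and the whole point of its placement before Proposition~\ref{pro::28} is that it holds unconditionally (it is only the upgrade from $\underline R F_0$ to the common function $\mathfrak R F_0$ that costs weak continuity).

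The fix is already contained in your own step (c): the perturbation $\tilde\varphi = \varphi + \bar q\,(x_d-\gamma(x'))$ with $\bar q\ge 0$ preserves touching from above (since $x_d-\gamma(x')\ge 0$ on $\Omega$ near $x_0$ and vanishes at $x_0$) for an \emph{arbitrary} $C^1$ test function $\varphi$, not just for the reduced affine-in-$x_d$ class. Applying the weak subsolution inequality to each $\tilde\varphi$ gives $\varphi_t(P_0)+(F_0\wedge H)(p-\bar q\,n(x_0))\le 0$ for every $\bar q\ge 0$, and taking the supremum over $\bar q$ yields $\varphi_t(P_0)+\underline R F_0(p)\le 0$ directly. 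This is the paper's proof: two lines, no reduction of the test-function class, no characteristic points, no weak continuity. Your detour through Proposition~\ref{pro:reduced} is not merely inefficient --- it imports a hypothesis the lemma does not have, so as written the argument does not prove the stated result.
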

\begin{proof}
We only prove the result for subsolutions since the case of supersolutions is treated similarly. 
\bigskip

\noindent {\sc Weak implies strong.}
Assume that $u$ is a weak $F_0$-subsolution. Consider a test function $\phi$ touching $u$ from above at $P_0=(t_0,x_0)$ with $t_0>0$
and $x_0 \in \partial \Omega$. Let $r_0>0$ and $\gamma \in C^1(\R^{d-1})$ such that
\eqref{e:pomega} holds true. Then for any $\bar q\ge 0$, consider
\[ \varphi(t,x):=\phi(t,x)+ \bar q (x_d - \gamma (x'))\]
which is also touching $u$ from above at $P_0$. 
Then, either the equation or the boundary condition is satisfied at $P_0$, 
\[\varphi_t + (F_0 \wedge H)(t,x,D \varphi)\le 0\quad \text{at}\quad P_0.\]
We used the fact that $D' \gamma (x'_0)=0$. 
With $p:=D \phi(P_0)$, the previous inequality reads,
\[\phi_t(P_0) +(F_0 \wedge H)(p-\bar q n (x_0))\le 0.\]
Because $\bar q\ge 0$ is arbitrary and recalling the definition of $\underline R F_0$ in \eqref{defi:relax-op},
the previous inequality implies  that $u$ is a strong $\underline R F_0$-subsolution.
\bigskip

\noindent {\sc Strong implies weak.}
Assume that $u$ is a strong $\underline R F_0$-subsolution. Consider a test function $\varphi$ touching $u$ from above at $P_0=(t_0,x_0)$ with $t_0>0$
and $x_0 \in \partial \Omega$. Then we have
\[\varphi_t(P_0) + \underline R F_0(t,x,p)\le 0\quad \text{with}\quad p:=D \varphi(P_0).\]
Because $\underline R F_0 \ge (F_0 \wedge H)$, we deduce that
\[\varphi_t(P_0) + (F_0 \wedge H)(t,x,p)\le 0\]
which shows that  $u$ is a weak $F_0$-subsolution.
\end{proof}

Even if Lemma \ref{lem:relax-op7} gives a full characterization of weak solutions in terms of strong solutions, it is not completely satisfactory, because we may have $\underline R F_0< \overline R F_0$, and we would like to have the same boundary function. This is achieved in the following two results (for subsolutions and for supersolutions) where the common boundary function is $\mathfrak R F_0$.
\begin{pro}[Weak $F_0$-subsolutions are strong $\mathfrak R F_0$-subsolutions]
  \label{pro::28}
Assume that $H,F_0$ satisfy \eqref{a:HandF0}.  Consider an upper semi-continuous function $u: Q \to \R$.
\begin{enumerate}[label=(\roman*)]
\item 
If $u$ is a weak $F_0$-subsolution  of \eqref{eq:HJ-bc} and if for all $t>0$ and $x_0 \in \partial \Omega$,
\begin{equation}\label{eq:weak-cont}
u^*(t,x_0)=\limsup_{(s,y)\to (t,x_0), y \in \Omega} u(s,y)
\end{equation}
then $u$ is a strong $\mathfrak R F_0$-subsolution  of \eqref{eq:HJ-bc}.
\item
If $u$ is a strong $\mathfrak R F_0$-subsolution  of \eqref{eq:HJ-bc}, then $u$ is a weak $F_0$-subsolution  of \eqref{eq:HJ-bc}.
\end{enumerate}
\end{pro}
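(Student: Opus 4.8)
The plan is to deduce both implications from the already-established characterisation in Lemma~\ref{lem:relax-op7}(i), which says that being a weak $F_0$-subsolution is equivalent to being a strong $\underline R F_0$-subsolution, together with the reduction of test functions furnished by Proposition~\ref{pro:reduced} and the structural facts about $\mathfrak R F_0$ from Section~\ref{s2}. The second implication (ii) is the easy one: if $u$ is a strong $\mathfrak R F_0$-subsolution, then at any boundary point $P_0$ and any test function $\varphi$ touching $u$ from above we have $\varphi_t + \mathfrak R F_0(t,x,D\varphi)\le 0$; since $\mathfrak R F_0 \ge \underline R F_0 \ge F_0\wedge H$ (Lemma~\ref{lem:relax-op} and Lemma~\ref{lem:relax-prop-1}), we immediately get $\varphi_t + (F_0\wedge H)(t,x,D\varphi)\le 0$, which is exactly the weak subsolution inequality. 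So the work is entirely in (i).

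For (i), first I would freeze the tangential data: at a boundary point $P_0=(t_0,x_0)$ write $D\varphi(P_0)=p'_0 - p_d\, n(x_0)$ with $p'_0\perp n(x_0)$, and set $\bar F_0(r)=F_0(t_0,x_0,p'_0 - r n(x_0))$, $\bar H(r)=H(t_0,x_0,p'_0-rn(x_0))$, so that everything reduces to the one-dimensional relaxation operators of Section~\ref{s2}. By Lemma~\ref{lem:relax-op7}(i), $u$ is already a strong $\underline R F_0$-subsolution; the point is to upgrade $\underline R F_0$ to $\mathfrak R F_0$. Since $\underline R F_0 = \mathfrak R F_0$ on $\{F_0\ge H\}$ (definition \eqref{eq:defi-relax}), the only thing to check is the boundary inequality at points where $\bar F_0(p_d) < \bar H(p_d)$, i.e.\ where $\mathfrak R F_0 = \overline R F_0$. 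But on $\{F_0 < H\}$ we have $\underline R F_0 = F_0$ (Lemma~\ref{lem:relax-prop-1}), and $\overline R F_0 \ge F_0$; moreover Corollary~\ref{cor::14} tells us that when $\mathfrak R F_0(p_d)\neq \bar H(p_d)$ the function $\mathfrak R F_0$ is locally constant near $p_d$, and Corollary~\ref{cor::11} together with Lemma~\ref{lem:relax-prop} describes this plateau: $\overline R F_0$ is constant equal to $\bar H(p_d^+)$ on $[p_d,p_d^+]$ with $p_d^+$ a negative... rather, with the right endpoint a positive characteristic point of $\overline R F_0$. The strategy is then the same slope-sliding argument as in the proof of Proposition~\ref{pro:reduced}: starting from a test function $\phi$ touching $u$ from above at $P_0$, use Lemma~\ref{lem:relax-op0} (whose hypothesis \eqref{eq::21} is guaranteed by the weak continuity assumption \eqref{eq:weak-cont}) to produce a critical normal slope $\underline p\le 0$ with $\bar H(p_d^0 + \underline p)\le \lambda := -\phi_t(P_0)$; then push the slope up to the characteristic point at the right end of the plateau of $\mathfrak R F_0$, producing a modified test function $\varphi$ of the form \eqref{eq::23} that still touches $u$ from above, and apply the strong $\underline R F_0$-subsolution property (i.e.\ Lemma~\ref{lem:relax-op7}(i)) at that reduced test function. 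Because $\underline R F_0$ coincides with $H$ at its negative characteristic points and with $\mathfrak R F_0$ on the relevant plateau, the resulting inequality reads $\lambda \ge \mathfrak R F_0(t_0,x_0,D\phi(P_0))$, which is the desired strong $\mathfrak R F_0$-subsolution inequality.

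Concretely the key steps, in order, are: (1) reduce to the one-dimensional normal variable by freezing tangential data; (2) handle the trivial region $\{F_0\ge H\}$ where $\mathfrak R F_0=\underline R F_0$ directly from Lemma~\ref{lem:relax-op7}(i); (3) in the region $\{F_0 < H\}$, invoke Corollary~\ref{cor::14} and Corollary~\ref{cor::11} to locate the constant plateau of $\mathfrak R F_0$ and identify its endpoint as a characteristic point; (4) run the critical-normal-slope / slope-sliding argument à la Proposition~\ref{pro:reduced}, using \eqref{eq:weak-cont} to ensure finiteness of $\underline p$, to reduce to a test function of the form \eqref{eq::23} and conclude via the strong $\underline R F_0$-subsolution property; (5) deduce (ii) trivially from $\mathfrak R F_0 \ge F_0\wedge H$. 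The main obstacle is step (4): carefully organising the modification of the test function so that it still lies above $u$ near $P_0$ while its normal slope lands exactly on the characteristic point, and checking that the various $\varepsilon$'s and $r$'s from Lemma~\ref{lem:relax-op0} and from the plateau description can be chosen compatibly — essentially the same bookkeeping as in Proposition~\ref{pro:reduced}, now applied with $\mathfrak R F_0$ in place of $\underline R F_0$ and using that the two agree on the plateau and at its characteristic endpoint.
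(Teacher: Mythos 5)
Your overall strategy for part (i) --- reduce, via the critical normal slope, to test functions whose normal slope is a characteristic point of the relaxed function, then exploit the coincidence of the various operators there --- is the right one, and part (ii) is handled exactly as in the paper (via $\mathfrak R F_0 = \overline R(\underline R F_0) \ge \underline R F_0 \ge F_0 \wedge H$). But steps (3)--(4) of your plan contain a concrete error that would derail a literal execution. In the region $\{F_0 < H\}$ you propose to locate the constant plateau of $\mathfrak R F_0 = \overline R F_0$ \emph{through} $p_d$ and to slide the normal slope to its endpoint, identified as a \emph{positive} characteristic point. That is the wrong plateau and the wrong kind of endpoint. On a maximal interval $(a,b)$ where $\mathfrak R F_0 < H$, the \emph{left} endpoint $a$ is the positive characteristic point (Lemma~\ref{lem:relax-prop}), and positive characteristic points are the ones adapted to \emph{super}solutions: at $a \in \chi^+(\mathfrak R F_0)$, Corollary~\ref{cor::30} gives $\mathfrak R F_0(a) \ge F_0(a)$, so the weak subsolution inequality at slope $a$ only yields $F_0(a) \le \lambda$, which does not bound $\mathfrak R F_0(a) = H(a)$ (and the slope $a$ may anyway lie below the critical slope, hence be inadmissible for a subsolution). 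Sliding to the right endpoint $b$ fails as well: $b$ need not be a negative characteristic point (one has $H > H(b)$ on $(a,b)$, not $H < H(b)$), and $F_0(b) < H(b) = \mathfrak R F_0(b)$ can occur, in which case the weak inequality at slope $b$ again gives only $F_0(b) \le \lambda$.

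The clean route --- the paper's --- requires no case split on the sign of $F_0 - H$ at $p_d$: since $\underline R(\mathfrak R F_0) = \mathfrak R F_0$ (Lemma~\ref{lem:relax-op}), one applies Proposition~\ref{pro:reduced} with $\mathfrak R F_0$ in the role of $F_0$, which reduces everything at once to test functions of the form \eqref{eq::23} whose normal slope is a \emph{negative} characteristic point $p_*$ of $q_d \mapsto \mathfrak R F_0(t_0,x_0,p_0',q_d)$. (Inside that proposition the sliding starts at the critical slope $\underline p_d^0 \le p_d^0$, where $H(\underline p_d^0) \le \lambda$ thanks to \eqref{eq:weak-cont}, and moves rightward along the plateau of $\{\mathfrak R F_0 > H\}$ issued from $\underline p_d^0$ --- a plateau which in general lies to the \emph{left} of the one containing $p_d^0$.) At such a point $p_*$, Corollary~\ref{cor::30} gives $\mathfrak R F_0(p_*) = H(p_*) \le F_0(p_*)$, hence $(F_0 \wedge H)(p_*) = \mathfrak R F_0(p_*)$, and the weak $F_0$-subsolution inequality at $p_*$ immediately yields $\mathfrak R F_0 \le \lambda$ there; one does not even need to pass through the strong $\underline R F_0$-subsolution property of Lemma~\ref{lem:relax-op7}. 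Your closing sentence gestures at this (``the same bookkeeping as in Proposition~\ref{pro:reduced}, applied with $\mathfrak R F_0$ in place of $\underline R F_0$''), but as written your plan sends the slope to the wrong characteristic point, and that step needs to be corrected before the argument closes.
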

\begin{proof}
Let \(F:=\mathfrak R F_0.\)

Let $u$ be a weak $F_0$-subsolution of \eqref{eq:HJ-bc} satisfying the weak continuity condition \eqref{eq:weak-cont}. 
Consider a test function $\varphi$ touching $u$ from above at $P_0=(t_0,x_0)$ with $t_0>0$ and $x_0 \in \partial \Omega$.
Setting $p:=D\varphi(P_0)$ and $\lambda:=-\varphi_t(P_0)$, we have
\[(F_0 \wedge H)(t_0,x_0,p)\le \lambda.\]
Since we have $F=\underline R F$ (see Lemma~\ref{lem:relax-op}),
we know from  Proposition~\ref{pro:reduced} that we can assume that $p=(p',p_d)$ where $p_d$  is a negative characteristic point of $q_d \mapsto F(t_0,x_0,p'- q_d n(x_0))$.
From Corollary \ref{cor::30}, we deduce that
\(H(t_0,x_0,p)=F(t_0,x_0,p) \le F_0 (t_0,x_0,p) \)
and then
\(F(t_0,x_0,p)\le \lambda\)
which shows that $u$ is a strong $F$-subsolution.

If we assume now that $u$ is a strong $F$-subsolution, because $F=\overline R \underline R F_0 \ge \underline R F_0$,
we deduce that $u$ is also a strong $\underline R F_0$-subsolution. Then \ref{2romain} of Lemma \ref{lem:relax-op7} shows that $u$ is a weak $F_0$-subsolution. 
\end{proof}

Similarly, we show the following result.

\begin{pro}[Weak $F_0$-supersolutions are strong $\mathfrak R F_0$-supersolutions]
  \label{pro::28b}
Assume that $H,F_0$ satisfy (\ref{a:HandF0}).   Consider a lower semi-continuous function $u: Q \to \R$.
Then $u$ is a weak $F_0$-supersolution of \eqref{eq:HJ-bc} if and only if  $u$ is a strong $\mathfrak R F_0$-supersolution of \eqref{eq:HJ-bc}. 
\end{pro}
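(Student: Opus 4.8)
The plan is to mirror the argument for subsolutions given in Proposition~\ref{pro::28}, using the ``dual'' machinery already established: Lemma~\ref{lem:relax-op7}(ii) (weak $F_0$-supersolution $\Longleftrightarrow$ strong $\overline R F_0$-supersolution), Proposition~\ref{pro:reducedb} (reduction of test functions for strong supersolutions via positive characteristic points of $q_d\mapsto \overline R F_0(t_0,x_0,p'-q_d n(x_0))$), and Corollaries~\ref{cor::30} and \ref{cor::11} for the values of $\mathfrak R F_0$ at its characteristic points. The key structural fact is $F:=\mathfrak R F_0 = \overline R F = \underline R F$ and $\mathfrak R F = F$ from Lemma~\ref{lem:relax-op}, together with $F = \underline R(\overline R F_0)$.

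\textbf{Strong $\Longrightarrow$ weak.} Suppose $u$ is a strong $\mathfrak R F_0$-supersolution. Since $F = \mathfrak R F_0 = \underline R(\overline R F_0) \le \overline R F_0$ by Lemma~\ref{lem:relax-op}, any test function $\varphi$ touching $u$ from below at a boundary point $P_0$ satisfies $\varphi_t + \overline R F_0(t,x,D\varphi) \ge \varphi_t + F(t,x,D\varphi) \ge 0$ at $P_0$, so $u$ is a strong $\overline R F_0$-supersolution, hence by Lemma~\ref{lem:relax-op7}(ii) a weak $F_0$-supersolution. This direction is immediate and requires no continuity hypothesis.

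\textbf{Weak $\Longrightarrow$ strong.} Suppose $u$ is a weak $F_0$-supersolution. Let $\varphi$ touch $u$ from below at $P_0=(t_0,x_0)$ with $t_0>0$, $x_0\in\partial\Omega$; set $p:=D\varphi(P_0)$, $\lambda:=-\varphi_t(P_0)$. From the weak supersolution property we first obtain $(F_0\vee H)(t_0,x_0,p)\ge\lambda$. Since $F=\overline R F$ (Lemma~\ref{lem:relax-op}), Proposition~\ref{pro:reducedb} lets us assume $p=(p',p_d)$ with $p_d$ a positive characteristic point of $q_d\mapsto F(t_0,x_0,p'-q_d n(x_0))$, i.e.\ $p_d\in\chi^+(\mathfrak R F_0)$ in the frozen-slope variable. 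Corollary~\ref{cor::30} then gives $H(t_0,x_0,p)=F(t_0,x_0,p)\ge F_0(t_0,x_0,p)$, so from $(F_0\vee H)(t_0,x_0,p)=H(t_0,x_0,p)\ge\lambda$ we conclude $F(t_0,x_0,p)\ge\lambda$, i.e.\ $\varphi_t+\mathfrak R F_0(t,x,D\varphi)\ge0$ at $P_0$. In the interior the equation is inherited trivially, so $u$ is a strong $\mathfrak R F_0$-supersolution.

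\textbf{Main obstacle.} The only delicate point is invoking Proposition~\ref{pro:reducedb} correctly: one must pass to local coordinates straightening $\partial\Omega$ (the $\gamma$ in \eqref{e:pomega}), verify that the reduced test functions of the form \eqref{eq::23b} suffice, and check that $q_d\mapsto \overline R F_0(t_0,x_0,p'-q_d n(x_0))$ — equivalently $\bar F_0$ in the one-dimensional normal variable — does have a positive characteristic point reachable from the critical normal slope furnished by Lemma~\ref{lem:relax-op0b}; this is the supersolution analogue of the argument inside the proof of Proposition~\ref{pro:reduced}, relying on coercivity of $H$ and Lemma~\ref{lem:opti-locb}. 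Unlike the subsolution case, no weak continuity assumption on $u$ is needed, because the critical normal slope for supersolutions (Lemma~\ref{lem:relax-op0b}) is automatically non-negative and, together with semi-coercivity of $\overline R F_0$, stays finite without extra hypotheses.
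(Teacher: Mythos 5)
Your proof is correct and follows exactly the route the paper intends: the paper proves only the subsolution case (Proposition~\ref{pro::28}) in detail and states that the supersolution case is obtained ``similarly'', which is precisely the dualisation you carry out via Lemma~\ref{lem:relax-op7}\ref{2romain}, Proposition~\ref{pro:reducedb} and Corollary~\ref{cor::30}, with the correct sign conventions ($F=\mathfrak R F_0=\overline R F\le \overline R F_0$ for the easy direction, and $F(p)=H(p)\ge F_0(p)$ at positive characteristic points for the hard one). One small caveat on your closing commentary: the critical normal slope of Lemma~\ref{lem:relax-op0b} need \emph{not} be finite for supersolutions (it can equal $+\infty$, e.g.\ in state-constraint-type situations), so the reason no weak-continuity hypothesis is required is not that finiteness ``holds automatically'' but that the supersolution reduction can be carried out without it, in contrast with the subsolution case where finiteness of the critical slope is exactly what the weak-continuity condition \eqref{eq::21} buys.
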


As a corollary of Lemma \ref{lem:weak-cont}, and of Propositions \ref{pro::28}, \ref{pro::28b}, we get the following equivalence
between weak $F_0$-solutions and strong $\mathfrak R F_0$-solutions.
\begin{theo}[Weak $F_0$-solutions are strong $\mathfrak R F_0$-solutions]
  \label{th:weak-strong}
Assume that $H,F_0$ satisfy (\ref{a:HandF0}).    
Assume that one of the following two conditions is satisfied:
\begin{enumerate}[label=(\roman*)]
\item \label{one}
either $F_0$ satisfies  the semi-coercivity condition (\ref{eq:semi-coercive}), 
\item or $u$ is weakly continuous at the boundary $\partial \Omega$, \textit{i.e.} it satisfies \eqref{eq:weak-cont}.
\end{enumerate}
Then a function $u: Q\to \R$ is a weak $F_0$-solution if and only if $u$ is a strong $\mathfrak R F_0$-solution.
\end{theo}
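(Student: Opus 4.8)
The plan is to assemble Theorem~\ref{th:weak-strong} directly from the three results immediately preceding it, so the proof is essentially a bookkeeping argument that separates the subsolution and supersolution halves and, on the subsolution side, supplies the weak-continuity hypothesis needed to invoke Proposition~\ref{pro::28}. First I would recall the definitions: a weak $F_0$-solution means $u^*$ is a weak $F_0$-subsolution and $u_*$ is a weak $F_0$-supersolution, and likewise for strong $\mathfrak R F_0$-solutions. So it suffices to prove the two equivalences separately: ($u^*$ weak $F_0$-subsol) $\iff$ ($u^*$ strong $\mathfrak R F_0$-subsol), and ($u_*$ weak $F_0$-supersol) $\iff$ ($u_*$ strong $\mathfrak R F_0$-supersol).

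The supersolution equivalence is immediate and unconditional: it is exactly Proposition~\ref{pro::28b} applied to $u_*$, and requires no extra hypothesis on $F_0$ or on $u$. The subsolution equivalence splits along the two alternatives in the statement. The ``strong implies weak'' direction is part (ii) of Proposition~\ref{pro::28} and needs nothing. For ``weak implies strong'' I need the weak-continuity property \eqref{eq:weak-cont} for $u^*$, which is part (i) of Proposition~\ref{pro::28}. Under hypothesis~\ref{one} (i.e.\ $F_0$ semi-coercive in the sense of \eqref{eq:semi-coercive}), this is precisely the content of Lemma~\ref{lem:weak-cont}: with $L = p_0 + F_0(t,x,p)$ the hypotheses of that lemma reduce to $H$ coercive and $F_0$ non-increasing in the normal direction and semi-coercive, all of which are guaranteed by \eqref{a:HandF0} together with the semi-coercivity assumption, so $u^*$ satisfies \eqref{eq:weak-cont} automatically. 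Under hypothesis~(ii) the weak continuity is assumed outright for $u$, hence for $u^*$. Either way, Proposition~\ref{pro::28}(i) applies and gives that $u^*$ is a strong $\mathfrak R F_0$-subsolution.

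Putting the pieces together: if $u$ is a weak $F_0$-solution, then $u^*$ is a weak $F_0$-subsolution, which by the above is a strong $\mathfrak R F_0$-subsolution, and $u_*$ is a weak $F_0$-supersolution, which by Proposition~\ref{pro::28b} is a strong $\mathfrak R F_0$-supersolution; hence $u$ is a strong $\mathfrak R F_0$-solution. Conversely, if $u$ is a strong $\mathfrak R F_0$-solution, then $u^*$ is a strong $\mathfrak R F_0$-subsolution, hence (Proposition~\ref{pro::28}(ii)) a weak $F_0$-subsolution, and $u_*$ is a strong $\mathfrak R F_0$-supersolution, hence (Proposition~\ref{pro::28b}) a weak $F_0$-supersolution; thus $u$ is a weak $F_0$-solution. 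I do not anticipate a genuine obstacle here, since all analytic work is packaged in the cited results; the only point requiring care is checking that the hypotheses of Lemma~\ref{lem:weak-cont} are indeed met by the choice $L = p_0 + F_0(t,x,p)$ under \eqref{a:HandF0} plus \eqref{eq:semi-coercive} — in particular that the semi-coercivity of $F_0$ translates into the condition $\inf_{p'\perp n(x)} L(t,x,v,p_0,p'+\lambda n(x)) \to +\infty$ as $\lambda\to+\infty$, which holds because $L$ does not depend on the tangential part of $p$ in a way that could spoil the limit and \eqref{eq:semi-coercive} controls exactly the normal direction.
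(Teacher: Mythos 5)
Your proof is correct and follows exactly the route the paper intends: the theorem is stated there as a direct corollary of Lemma~\ref{lem:weak-cont} together with Propositions~\ref{pro::28} and~\ref{pro::28b}, which is precisely the decomposition you carry out. The only point you elaborate beyond the paper --- verifying that $L=p_0+F_0(t,x,p)$ meets the hypotheses of Lemma~\ref{lem:weak-cont} under \eqref{eq:semi-coercive} --- is handled at the same level of detail as the paper itself, so nothing is missing.
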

\begin{rem}
  This result under assumption~\ref{one} is exactly the same result as in  \cite[Theorem 1.3]{zbMATH06731793}, when we use our identification result Theorem \ref{th::GR}.
\end{rem}

\begin{counterexample}\label{rem::32}
  When we have neither the semi-coercivity of $F_0$, nor the weak continuity of the solution $u$, then $u$ can be a weak $F_0$-solution
  without being a strong $F$-solution for $F:=\mathfrak R F_0$, as shows the following counter-example.
We consider $\Omega = (0,+\infty)$ and 
\[ H(p):=|p|,\quad F_0\equiv 0,\quad F(p)=(\mathfrak R F_0)(p)=\max(-p,0) \]
where $F$ is semi-coercive, and for all $t>0$, we consider
\[ u(t,x)=\begin{cases}
1&\quad \text{if}\quad x=0\\
0 &\quad \text{if}\quad x>0.
\end{cases}
\]
One can check that $u$ is a (discontinuous) weak $F_0$-solution, but is not a strong $\mathfrak R F_0$-solution, neither a weak $\mathfrak R F_0$-solution.

On the contrary, for instance the function
\[ v(t,x)=\begin{cases}
-1&\quad \text{if}\quad x=0,\\
0 &\quad \text{if}\quad x>0
\end{cases}
\]
is both a (discontinuous) weak $F_0$-solution, and a strong $\mathfrak R F_0$-solution (and then also a weak $\mathfrak R F_0$-solution).
\end{counterexample}

\subsection{Existence and stability of weak solutions}

Given $T>0$, we consider the following problem,
\begin{equation}\label{eq::r95}
\begin{cases}
u_t + H(t,x, Du)=0&\quad \text{in}\quad (0,T) \times \Omega\\
u_t + F_0(t,x,Du)=0 &\quad \text{on}\quad (0,T)\times \partial \Omega
\end{cases}
\end{equation}
supplemented with the following initial condition
\begin{equation}\label{eq::r96}
u(0,\cdot)=u_0 \quad \text{in}\quad \left\{0\right\} \times \Omega.
\end{equation}

We have the following results. Their proofs are standard, so we skip it.
\begin{pro}[Stability of weak solutions by infimum/suppremum]\label{pro::r94b}
Assume that $H,F_0$ satisfy (\ref{a:HandF0}).
Let $\mathcal A$ be a non-empty set and let $(u_a)_{a\in \mathcal A}$ be a family of weak $F_0$-subsolutions (resp. weak $F_0$-supersolutions) of (\ref{eq::r95}). Let us assume that
\[u:=\sup_{a\in \mathcal A} u_a \quad (\text{resp.}\quad u:=\inf_{a\in \mathcal A} u_a)\]
is locally bounded on $(0,T)\times \overline \Omega$. Then $u^*$ is a weak $F_0$-subsolution (resp. $u_*$ is weak $F_0$-supersolution) of (\ref{eq::r95}).
\end{pro}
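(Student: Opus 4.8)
The plan is to prove Proposition~\ref{pro::r94b} by the classical argument for stability of viscosity sub/supersolutions under supremum/infimum, adapted to the weak boundary formulation. I only treat the case of subsolutions: let $u := \sup_{a \in \mathcal{A}} u_a$, which is locally bounded by hypothesis, and I must show that $u^*$ is a weak $F_0$-subsolution of \eqref{eq::r95}. Since each $u_a$ is upper semi-continuous and $u_a \le u \le u^*$, and $u^*$ is the smallest upper semi-continuous function above $u$, the work is entirely local: fix a test function $\varphi \in C^1$ touching $u^*$ from above at $P_0 = (t_0,x_0) \in (0,T) \times \overline{\Omega}$; I must produce the appropriate viscosity inequality at $P_0$ (the interior inequality $\varphi_t + H(t,x,D\varphi) \le 0$ if $x_0 \in \Omega$, and the disjunction ``$\varphi_t + H(t,x,D\varphi) \le 0$ or $\varphi_t + F_0(t,x,D\varphi) \le 0$'' at $P_0$ if $x_0 \in \partial\Omega$).

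The key step is a standard perturbation/extraction argument. First I would replace $\varphi$ by $\varphi + |(t,x)-P_0|^2$ (or $\varphi + \eta(|t-t_0|^2 + |x-x_0|^2)$) so that $P_0$ becomes a strict local maximum point of $u^* - \varphi$ on $\bar B_r(P_0) \cap ((0,T)\times\overline{\Omega})$ for some small $r>0$; it suffices to prove the inequalities for this modified test function since the quadratic perturbation has zero gradient and zero time-derivative at $P_0$. Next, since $u^*(P_0) = \limsup_{(s,y)\to P_0, (s,y)\in Q} u(s,y)$ and $u = \sup_a u_a$, there exist sequences $a_k \in \mathcal{A}$ and points $P_k = (t_k,x_k) \to P_0$, $P_k \in (0,T)\times\overline\Omega$, with $u_{a_k}(P_k) \to u^*(P_0)$. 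For each $k$, the upper semi-continuous function $u_{a_k} - \varphi$ attains a maximum over the compact set $\bar B_r(P_0) \cap ((0,T)\times\overline\Omega)$ at some point $Q_k = (s_k,y_k)$; a routine argument (using $u_{a_k}(P_k) - \varphi(P_k) \le u_{a_k}(Q_k) - \varphi(Q_k) \le u^*(Q_k) - \varphi(Q_k)$ together with the strict maximum property and an upper-semicontinuity/lim sup argument) shows $Q_k \to P_0$, $Q_k$ lies in the interior of $\bar B_r(P_0)$ for $k$ large, and $u_{a_k}(Q_k) \to u^*(P_0)$.

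Then I apply the weak $F_0$-subsolution property of $u_{a_k}$ at the point $Q_k$ with the test function $\varphi$ (shifted vertically by the constant $u_{a_k}(Q_k) - \varphi(Q_k)$ so that it genuinely touches $u_{a_k}$ from above at $Q_k$; this vertical shift changes neither $\varphi_t$ nor $D\varphi$). I split into two cases. If $x_0 \in \Omega$, then $y_k \in \Omega$ for $k$ large, so $\varphi_t + H(s_k,y_k,D\varphi(Q_k)) \le 0$; passing to the limit using continuity of $H$ and of $\varphi$, $D\varphi$ gives $\varphi_t + H(t_0,x_0,D\varphi(P_0)) \le 0$. If $x_0 \in \partial\Omega$, then for each $k$ either $\varphi_t + H(\cdot,D\varphi)(Q_k) \le 0$ or $\varphi_t + F_0(\cdot,D\varphi)(Q_k) \le 0$ holds (when $y_k \in \partial\Omega$), while if $y_k \in \Omega$ only the first alternative is available; in all cases at least one of the two inequalities holds along a subsequence, and passing to the limit in that fixed alternative yields the required disjunction at $P_0$. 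Undoing the quadratic perturbation finishes the proof; the supersolution case is symmetric with infimum, lower semi-continuous envelopes and reversed inequalities.

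The only mildly delicate point — and the one I would state carefully rather than wave away — is the convergence $Q_k \to P_0$ and the non-escape of $Q_k$ to the boundary of the ball $\bar B_r(P_0)$: this is where the choice of a \emph{strict} local maximum (via the quadratic perturbation) is essential, combined with the fact that $\limsup_k u_{a_k}(Q_k) \le u^*(P_0) = \varphi(P_0)$ by definition of $u^*$ and the relation $u_{a_k} \le u \le u^*$. Everything else is boilerplate, which is presumably why the authors say ``their proofs are standard, so we skip it.''
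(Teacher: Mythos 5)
Your argument is the standard stability-by-supremum proof for viscosity solutions (strict maximum via a quadratic perturbation, extraction of near-optimal indices $a_k$ and maximum points $Q_k\to P_0$, pigeonhole on the two boundary alternatives before passing to the limit), and it is correct, including the delicate points you flag. The paper omits the proof precisely because it is this standard argument, so your proposal matches the intended route.
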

%
%
\begin{pro}[Stability of weak solutions by half-relaxed limits]\label{pro::r94c}
Assume that $H,F_0$ satisfy (\ref{a:HandF0}).
Let $(u_\varepsilon)_{\varepsilon}$ be a family of weak $F_0$-subsolutions (resp. weak $F_0$-supersolutions) of (\ref{eq::r95}). Let us assume that
the half-relaxed limit
$$u:=\limsup_{\varepsilon\to 0}{}^{*} u^\varepsilon \quad (\text{resp.}\quad u:=\liminf_{\varepsilon\to 0}{}_{*} u^\varepsilon)$$
is locally bounded on $(0,T)\times \overline \Omega$. Then $u$ is a weak $F_0$-subsolution (resp. weak $F_0$-supersolution) of \eqref{eq::r95}.
\end{pro}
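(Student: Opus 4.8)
The plan is to run the classical half-relaxed limits argument of Barles and Perthame, the only genuinely new point being the treatment of the boundary alternative in the definition of a weak subsolution. I would treat only the subsolution statement, the supersolution one being entirely symmetric (exchange $\limsup_{\varepsilon\to 0}{}^{*}$ with $\liminf_{\varepsilon\to 0}{}_{*}$, ``from above'' with ``from below'', and $F_0\wedge H$ with $F_0\vee H$). Set $u:=\limsup_{\varepsilon\to 0}{}^{*}u^\varepsilon$; it is upper semi-continuous by construction and, by hypothesis, locally bounded on $(0,T)\times\overline\Omega$. Let $\varphi$ be a $C^1$ test function touching $u$ from above at $P_0=(t_0,x_0)$ with $t_0>0$ and $x_0\in\overline\Omega$. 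First I would localize and make the contact strict: replacing $\varphi(t,x)$ by $\varphi(t,x)+|t-t_0|^2+|x-x_0|^2$ (which modifies neither $\varphi_t(P_0)$ nor $D\varphi(P_0)$, hence changes nothing in the conclusion), I may assume that $P_0$ is the strict maximum point of the upper semi-continuous map $u-\varphi$ over the compact set $K:=\overline{B_r(P_0)}\cap\big((0,T)\times\overline\Omega\big)$, for some small $r>0$ chosen so that $\overline{B_r(P_0)}\subset(0,T)\times\R^d$.

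Next I would apply the standard half-relaxed limits lemma (see \cite{zbMATH04131139}): since each $u^\varepsilon$ is upper semi-continuous, $\varphi$ is continuous, and $P_0$ is the strict maximum of $u-\varphi$ on the compact set $K$, there exist points $P_\varepsilon=(t_\varepsilon,x_\varepsilon)\in K$ at which $u^\varepsilon-\varphi$ attains its maximum over $K$ and such that, along a subsequence, $P_\varepsilon\to P_0$ and $u^\varepsilon(P_\varepsilon)\to u(P_0)$. For $\varepsilon$ small, $P_\varepsilon$ lies in the (relative) interior of $K$, so $\varphi$ — after the usual modification turning a local into a global contact if needed — is an admissible test function from above for $u^\varepsilon$ at $P_\varepsilon$.

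Finally I would pass to the limit in the subsolution inequalities satisfied by $u^\varepsilon$ at $P_\varepsilon$, distinguishing two cases. If $x_0\in\Omega$, then $x_\varepsilon\in\Omega$ for $\varepsilon$ small, so $\varphi_t+H(t_\varepsilon,x_\varepsilon,D\varphi)\le0$ at $P_\varepsilon$; letting $\varepsilon\to0$ and using $\varphi\in C^1$ together with the continuity of $H$ yields $\varphi_t+H(t_0,x_0,D\varphi)\le0$ at $P_0$. If $x_0\in\partial\Omega$, then for each $\varepsilon$ either $x_\varepsilon\in\Omega$, so $\varphi_t+H(t_\varepsilon,x_\varepsilon,D\varphi)\le0$ at $P_\varepsilon$, or $x_\varepsilon\in\partial\Omega$, so by Definition~\ref{defi:weak-visc-sol} (with $L=p_0+F_0$) one has $\varphi_t+H\le0$ or $\varphi_t+F_0\le0$ at $P_\varepsilon$; in every case, since $F_0\wedge H\le H$ and $F_0\wedge H\le F_0$,
\[ \varphi_t+(F_0\wedge H)(t_\varepsilon,x_\varepsilon,D\varphi)\le0\quad\text{at }P_\varepsilon. \]
Passing to the limit gives $\varphi_t+(F_0\wedge H)(t_0,x_0,D\varphi)\le0$ at $P_0$, i.e. $\varphi_t+H\le0$ or $\varphi_t+F_0\le0$ at $P_0$ — precisely the weak $F_0$-subsolution condition at a boundary point. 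Hence $u$ is a weak $F_0$-subsolution of \eqref{eq::r95}, and the symmetric argument handles weak $F_0$-supersolutions. I expect no real obstacle here (consistent with the paper's remark that the proof is standard); the only point requiring a moment's care is the three-way boundary alternative above, together with the use of continuity of $H$ (and $F_0$) up to $\overline\Omega$ so that interior maxima $x_\varepsilon\in\Omega$ converging to $x_0\in\partial\Omega$ still yield the claimed inequality at $P_0$.
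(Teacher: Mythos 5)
Your proof is correct and is exactly the standard Barles--Perthame half-relaxed limits argument that the paper itself invokes when it states that ``their proofs are standard, so we skip it''; the one point specific to this setting --- rewriting the boundary alternative as the single inequality $\varphi_t+(F_0\wedge H)(t,x,D\varphi)\le 0$, which is stable under limits of the contact points $P_\varepsilon$ whether they lie in $\Omega$ or on $\partial\Omega$ --- is handled properly. Nothing to add.
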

Finally, we have the following existence result.

\begin{theo}[Existence of weak solutions]\label{th::r94}
  Assume that $H,F_0$ satisfy \eqref{a:HandF0} and $\Omega$ is bounded and that the initial data $u_0: \overline \Omega \to \R$ is uniformly continuous.
Then there exists a function $u \colon [0,T) \times \overline \Omega \to \R$ that is a weak $F_0$-solution of (\ref{eq::r95})-(\ref{eq::r96}) satisfying for some constant $C_T>0$
\[|u(t,x)-u_0(x)|\le C_T\quad \text{for all}\quad (t,x)\in [0,T)\times \overline \Omega. \]
\end{theo}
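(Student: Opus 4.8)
The plan is to construct the weak $F_0$-solution by Perron's method, exactly as in the classical theory \cite{zbMATH04142546}, taking advantage of the stability results just stated. First I would build a pair of barriers. Since $\Omega$ is bounded, $u_0$ is uniformly continuous and $H, F_0$ are continuous, one expects to find a constant $C_T>0$ and functions $u^\pm(t,x) = u_0(x) \pm \omega(\text{dist})\pm C_T t$-type expressions (more precisely, $u^\pm$ built from the modulus of continuity of $u_0$ so that $|u^\pm(t,x)-u_0(x)|\le C_T$) such that $u^+$ is a weak $F_0$-supersolution, $u^-$ is a weak $F_0$-subsolution, both take the value $u_0$ at $t=0$, and $u^- \le u^+$. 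Here coercivity of $H$ is what lets the equation in $\Omega$ be satisfied by steep enough test functions, and the monotonicity of $F_0$ in $p\cdot n$ together with continuity handles the boundary piece; note that \emph{no} semi-coercivity of $F_0$ is needed because in Perron's construction one only needs the boundary inequality to hold in the weak (``either/or'') sense, and the equation branch can always be invoked thanks to coercivity of $H$.

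Next I would set
\[
u(t,x) := \sup\{\, w(t,x) : u^- \le w \le u^+,\ w \text{ is a weak } F_0\text{-subsolution of } \eqref{eq::r95}\,\},
\]
which is nonempty (it contains $u^-$) and locally bounded (trapped between $u^-$ and $u^+$). By Proposition~\ref{pro::r94b}, $u^*$ is a weak $F_0$-subsolution, and since $u \le u^+$ and $u^+$ is upper semi-continuous we get $u^* \le u^+$, so $u^*$ is admissible in the supremum and hence $u^* = u$; in particular $u$ is itself a weak $F_0$-subsolution. The bound $|u(t,x)-u_0(x)|\le C_T$ is then immediate from $u^- \le u \le u^+$ and the construction of the barriers, and likewise the initial condition $u(0,\cdot)=u_0$ follows from $u^-(0,\cdot)=u^+(0,\cdot)=u_0$ by squeezing. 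The standard ``bump'' argument of Perron's method then shows $u_*$ is a weak $F_0$-supersolution: if it failed at some point $P_0$, one could add a small strict subsolution correction to $u$ near $P_0$ and stay below $u^+$, contradicting maximality — this is where one uses stability by supremum (Proposition~\ref{pro::r94b}) once more, applied to $u$ together with the local correction.

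The one genuinely delicate point — and the main obstacle — is the behaviour at the boundary $\partial\Omega$ inside Perron's bump step: when the contact point $P_0$ lies on $(0,T)\times\partial\Omega$, one must check that the weak boundary condition (rather than just the interior equation) is not an obstruction to constructing the local strict subsolution. Since the boundary condition is understood in the weak ``equation or $L\le 0$'' sense and $H$ is coercive, one can always tilt the test function in the normal direction ($p \mapsto p - \rho n$ with $\rho \ge 0$ large) to make $H$ large and thus restore the subsolution inequality via the equation branch; this is precisely the mechanism already exploited in the proof of Lemma~\ref{lem:relax-op7}. The time-boundary $t=0$ requires the usual care with a barrier that pins the initial value, which is why the modulus of continuity of $u_0$ and boundedness of $\Omega$ enter. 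Since the proof is entirely standard once the barriers are in place, and the excerpt explicitly says ``Their proofs are standard, so we skip it,'' I would present only this outline — barriers, Perron supremum, subsolution property via stability, supersolution property via the bump argument, and the $C_T$ bound from the barriers — without grinding through the explicit formulas for $u^\pm$.
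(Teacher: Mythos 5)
Your proposal is correct and takes exactly the route the paper intends: the paper omits the proof entirely, stating that it follows by Perron's method (Ishii's construction) along the lines of \cite[Theorem~2.14]{zbMATH06713740}, which is precisely your scheme of barriers built from the modulus of continuity of $u_0$, the Perron supremum combined with Proposition~\ref{pro::r94b}, and the bump argument with the coercivity of $H$ handling the weak boundary condition. Your identification of the boundary contact point as the only delicate step, resolved by tilting in the normal direction, matches the mechanism used throughout the paper (e.g.\ in Lemma~\ref{lem:relax-op7}).
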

\begin{rem}
  The boundedness of $\Omega$ can be removed if one assumes for instance that
  \[ \sup_{t \in (0,T), x \in \Omega, p \in B_R} |H(t,x,p)| + |F_0(t,x,p)| < +\infty \]
  for all  $R>0$. 
\end{rem}
Such a result is proved by using  Perron's method. We recall that this method was introduced for viscosity solution by H. Ishii in \cite{zbMATH04142546}).
Here we skip the proof since it is completely similar to the proof of \cite[Theorem~2.14]{zbMATH06713740}.

\section{Guerand's approach}\label{s4}

This section is devoted to the proof of Theorem~\ref{th::GR}. We first recall the definition of Guerand's relaxation operator. 

\subsection{Guerand's relaxation operator}

The definition of Guerand's relaxation operator relies on the notion of limiter points. 
We split the set of limiter points $A_{F_0}$ into two subsets $A_{F_0}^+$ and $A_{F_0}^-$.
\begin{defi}[Positive and negative limiter points]
  \label{defi::3}
  \begin{enumerate}[label=(\roman*)]  
\item 
  A real number $p$ is a \emph{positive limiter point} of $F_0$ if $p^+ > p$ and $H(p) \ge F_0(p)$ and for all $q \in \R$, 
  \[ H(p) > H(q)\ge F_0(q) \Rightarrow (q^-,q^+)\cap (p,p^+)=\emptyset.\]
The set of all positive limiter points is denoted by $A^+_{F_0}$.
\item 
  A real number $p$ is a \emph{negative limiter point} of $F_0$ if $p^- < p$ and $H(p) \le F_0(p)$ and for all $q \in \R$, 
\[ F_0(q)\ge H(q) > H(p)  \Rightarrow (q^-,q^+)\cap (p^-,p)=\emptyset .\]
The set of all negative limiter points is denoted by $A^-_{F_0}$.
\item The set of all positive and negative limiter points is denoted by $A_{F_0}$.
\end{enumerate}
\end{defi}
\begin{rem}
  Remark that $A_{F_0}= \bigcup_{\alpha\in I} \left\{p_\alpha\right\}$ where $I$ is at most countable. Moreover, open intervals $(p_\alpha^-,p_\alpha^+)$ are disjoint, see
  \cite[Lemma~3.7]{zbMATH06731793}.
\end{rem}
\begin{defi}[Guerand's relaxation operator] \label{defi:guerand-relaxed}
We set for $p\in \R$
\[
  (\mathfrak JF_0)(p):=
  \begin{cases}
    H(p_\alpha) & \text{ if }  p\in [p_\alpha^-,p_\alpha^+] \text{ for some } p_\alpha \in A_{F_0},\\
    H(p) & \text{ elsewhere}.
  \end{cases}
\]
\end{defi}
\begin{rem}
In \cite{zbMATH06731793}, $\mathfrak JF_0$ is denoted by $F_{A_{F_0}}$.
\end{rem}
\begin{pro}[Property of $\mathfrak JF_0$, \cite{zbMATH06731793}] \label{pro::5}
The function $\mathfrak JF_0$ is well-defined, continuous and non-increasing.
\end{pro}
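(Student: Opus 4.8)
\textbf{Proof proposal for Proposition~\ref{pro::5}.}

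The plan is to verify the three claimed properties---well-definedness, continuity, monotonicity---by a careful analysis of the structure of the limiter set $A_{F_0}$ and the geometry of the intervals $[p_\alpha^-,p_\alpha^+]$. First I would establish \emph{well-definedness}: the only way $\mathfrak J F_0$ could be ambiguous is if $p$ belonged to two distinct intervals $[p_\alpha^-,p_\alpha^+]$ and $[p_\beta^-,p_\beta^+]$ with $H(p_\alpha)\neq H(p_\beta)$. The remark preceding the definition (quoting \cite[Lemma~3.7]{zbMATH06731793}) says the open intervals $(p_\alpha^-,p_\alpha^+)$ are pairwise disjoint, so any overlap can only occur at a shared endpoint, say $p=p_\alpha^+=p_\beta^-$. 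At such a shared endpoint I would use the endpoint identities from Lemma~\ref{lem:relax-prop} (namely $H(p^+)=H(p)$ when $p^+<+\infty$, and the analogous statement for $p^-$) together with the defining properties of upper/lower points to conclude $H(p_\alpha)=H(p_\alpha^+)=H(p)=H(p_\beta^-)=H(p_\beta)$, so the two prescriptions agree. One also needs to check that on the overlap between an interval $[p_\alpha^-,p_\alpha^+]$ and the ``elsewhere'' region the two branches agree, i.e. that $H(p_\alpha)=H(p)$ at the endpoints $p=p_\alpha^\pm$; this is again exactly the content of the endpoint equalities.

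Next I would prove \emph{continuity}. Away from the endpoints of the limiter intervals, $\mathfrak J F_0$ locally coincides with either the continuous function $H$ or a constant, so it is continuous there. The only delicate points are the endpoints $p_\alpha^\pm$ of the intervals, and accumulation points of the countable family $\{[p_\alpha^-,p_\alpha^+]\}_{\alpha\in I}$. At a single endpoint $p_\alpha^+$ (finite), the value from inside the interval is $H(p_\alpha)$ and the value just outside is $H(\cdot)$, which tends to $H(p_\alpha^+)=H(p_\alpha)$ by continuity of $H$ and the endpoint identity; likewise at $p_\alpha^-$. For an accumulation point $\bar p$ of the limiter intervals, I would argue that the diameters $p_\alpha^+-p_\alpha^-$ of the accumulating intervals must go to zero (otherwise disjointness fails in a neighbourhood of $\bar p$), and that on each such interval $\mathfrak J F_0=H(p_\alpha)$ with $p_\alpha$ inside the interval, so $\mathfrak J F_0$ on the accumulating intervals is squeezed between values of $H$ at points converging to $\bar p$; hence $\mathfrak J F_0(p)\to H(\bar p)=\mathfrak J F_0(\bar p)$ (the last equality because $\bar p$ is either in ``elsewhere'' or is itself an endpoint, handled above). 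This uses only coercivity of $H$ (to bound things locally) and continuity of $H$.

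Finally, \emph{monotonicity}. Here the idea is that $\mathfrak J F_0$ is built by taking $H$ and flattening it to the constant value $H(p_\alpha)$ precisely on the intervals $[p_\alpha^-,p_\alpha^+]$ where $H$ dips below $H(p_\alpha)$ (by the very definitions of $p^\pm$, we have $H\le H(p_\alpha)$ on $[p_\alpha^-,p_\alpha^+]$, with strict inequality in the open interval except at shared endpoints). I would show: for $p<q$, if both lie outside all limiter intervals then one must still rule out $H(p)<H(q)$---this is where the defining \emph{disjointness/maximality} conditions in Definition~\ref{defi::3} enter, ensuring that every local increase of $H$ is ``captured'' inside some limiter interval on which $\mathfrak J F_0$ is constant. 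The cleanest route is probably to show directly that $\mathfrak J F_0$ coincides with $\mathfrak R F_0$ via Theorem~\ref{th::GR}---but Theorem~\ref{th::GR} is proved \emph{using} this proposition, so that would be circular; instead I would give the self-contained argument. \textbf{The main obstacle} is exactly this monotonicity step: one has to show that the limiter intervals are placed so that $H$ never strictly increases while staying outside every $[p_\alpha^-,p_\alpha^+]$, which requires unwinding the quantified conditions ``$H(p)>H(q)\ge F_0(q)\Rightarrow(q^-,q^+)\cap(p,p^+)=\emptyset$'' in the definition of positive/negative limiter points and combining them with coercivity. I would handle this by taking, for a would-be violation $p<q$ with $\mathfrak J F_0(p)<\mathfrak J F_0(q)$, a point realizing the relevant extremum of $H$ between $p$ and $q$, examining its upper/lower points, and deriving that it generates a limiter point whose interval covers the location of the purported increase---contradiction. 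Since this is essentially a reorganization of Guerand's original argument, I expect it to go through, but it is the only part that is not a routine local computation.
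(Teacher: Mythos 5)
First, a point of comparison: the paper does not actually prove Proposition~\ref{pro::5}; it is quoted from Guerand's article \cite{zbMATH06731793} (hence the citation in the header), so there is no in-paper proof to measure your argument against. On its own merits, your treatment of well-definedness and of continuity is essentially correct: disjointness of the open intervals $(p_\alpha^-,p_\alpha^+)$ (quoted from Guerand) reduces any ambiguity to shared endpoints, where the identities $H(p^\pm)=H(p)$ --- immediate from the definition of $p^\pm$ and the continuity and coercivity of $H$ --- give agreement, and the same identities handle matching with the ``elsewhere'' branch; the accumulation-point analysis for continuity is a routine real-analysis exercise that goes through as you indicate.

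The gap is in the monotonicity step, and it is twofold. First, the geometric picture you base it on is wrong: you assert that $H\le H(p_\alpha)$ on all of $[p_\alpha^-,p_\alpha^+]$ with strict inequality inside, but by the very definition of the upper point one has $H>H(p_\alpha)$ on $(p_\alpha,p_\alpha^+)$ (and $H<H(p_\alpha)$ on $(p_\alpha^-,p_\alpha)$); the operator $\mathfrak J$ therefore truncates a bump of $H$ \emph{downwards} on the right half of each limiter interval and fills a dip \emph{upwards} on the left half, which is not the mechanism you describe. Second, the key assertion --- that $H$ cannot strictly increase between two points lying outside every limiter interval --- is only announced: your strategy (``take an extremum, show it generates a limiter interval covering the increase'') is not executed, and the candidate limiter point it would produce must in particular satisfy the sign conditions $H\ge F_0$ or $H\le F_0$ from Definition~\ref{defi::3}, which nothing in your sketch establishes; this is precisely the non-routine content of Guerand's lemma. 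Note finally that the detour you reject as circular is in fact available within this paper's logic: the proof of Theorem~\ref{t:relax-coincide} uses only the well-definedness of $\mathfrak J F_0$ and the countability/disjointness of the limiter intervals, not its continuity or monotonicity, so once well-definedness is established you may legitimately deduce continuity and monotonicity of $\mathfrak J F_0$ from the corresponding properties of $\mathfrak R F_0$ in Lemma~\ref{lem:relax-op} via the identity $\mathfrak R F_0=\mathfrak J F_0$.
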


\subsection{Relaxation operators coincide}

In order to prove that $\mathfrak R F_0$ and $\mathfrak J F_0$ coincide,
we first prove that it is the case for limiter and characteristic points. 
\begin{pro}[Limiter points coincide with characteristic points of the relaxed function]\label{p:lim-char}
We have \(\chi^\pm (\mathfrak R F_0) = A^\pm_{F_0}.\) In other words, the characteristic points of the relaxed function coincide with the limiter points of the original function.
\end{pro}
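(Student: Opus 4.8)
The plan is to prove the two inclusions $A^\pm_{F_0} \subset \chi^\pm(\mathfrak R F_0)$ and $\chi^\pm(\mathfrak R F_0) \subset A^\pm_{F_0}$ separately, treating only the ``$-$'' case since the ``$+$'' case is symmetric (swap $\underline R$ with $\overline R$, reverse the order relations, etc.). Throughout I would set $F := \mathfrak R F_0$ and freely use the structural results already established: $F = \underline R F = \overline R F$ and $\mathfrak R F = F$ (Lemma~\ref{lem:relax-op}); the inclusion $\{F > H\} \subset \{F_0 > H\}$ (Lemma~\ref{lem:relax-op}); the local constancy statements of Lemma~\ref{lem:opti-loc} and Corollary~\ref{cor::14}; and most importantly the plateau description of Corollary~\ref{cor::11}, which says that near a characteristic point $p$ of $F$ the function $F$ is constant equal to $H(p)$ on $[p^-,p]$ (negative case). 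I would also keep Lemma~\ref{lem:relax-prop} at hand, which describes $\underline R F_0$ near its characteristic points.

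For the inclusion $A^-_{F_0} \subset \chi^-(\mathfrak R F_0)$: take $p \in A^-_{F_0}$, so $p^- < p$, $H(p) \le F_0(p)$, and the limiter exclusion property holds. First I would show $F(p) = H(p)$, which is the defining height condition for a characteristic point: since $p^- < p$, by definition of $p^-$ we have $H < H(p)$ on $(p^-,p)$, and I would argue that the limiter condition forces $\overline R F_0(p) = H(p)$ — indeed $\overline R F_0(p) = \inf_{q \le p}(F_0 \vee H)(q)$, and any $q < p$ with $(F_0 \vee H)(q) < H(p)$ would have to satisfy $H(q) < H(p)$ and $F_0(q) < H(p)$; pushing $q$ to where the graph of $F_0$ meets $H$ (using coercivity and continuity) produces a point $\tilde q$ with $H(\tilde q) = F_0(\tilde q) < H(p)$ whose interval $(\tilde q^-, \tilde q^+)$ would meet $(p^-,p)$, contradicting the limiter property; hence $\overline R F_0 = H(p)$ on a left-neighborhood of $p$ and then $F = \underline R(\overline R F_0) = H(p)$ there too. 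Combined with $H < H(p)$ on $(p^-,p) \ne \emptyset$ this gives $p \in \chi^-(F)$. The converse inclusion $\chi^-(\mathfrak R F_0) \subset A^-_{F_0}$: take $p \in \chi^-(F)$. Then $H(p) = F(p)$ and $H < H(p)$ on $(p-\eps,p)$ for some $\eps > 0$; in particular $p^- < p$. By Corollary~\ref{cor::30}, $F(p) \le F_0(p)$, i.e. $H(p) \le F_0(p)$. It remains to verify the exclusion property: suppose $q$ satisfies $F_0(q) \ge H(q) > H(p)$ with $(q^-,q^+) \cap (p^-,p) \ne \emptyset$; I would derive a contradiction using Corollary~\ref{cor::11}, which says $F$ is constant equal to $H(p)$ on the whole plateau $[p^-,p]$, together with the fact that a point $q$ with $F_0(q) \ge H(q) > H(p)$ forces $F = \underline R F_0 \ge H(q) > H(p)$ at $q$ (and on $(q^-,q^+)$ via the plateau structure of Lemma~\ref{lem:relax-prop} applied to $\underline R F_0$, using that $F = \underline R F$ and $\{F>H\} \subset \{F_0>H\}$), so the two plateaus at heights $H(p)$ and $H(q) > H(p)$ would overlap while a non-increasing function cannot take two values on overlapping intervals.

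The main obstacle I anticipate is the careful bookkeeping at the endpoints $p^-$ and $p^+$, in particular when $p^+ = +\infty$ (Remark~\ref{rem::local}) or when $p^- = p$ fails to satisfy $p^- < p$ strictly, and the delicate passage in the first inclusion where I pass from ``there is a bad $q \le p$ with small $(F_0 \vee H)(q)$'' to ``there is a point $\tilde q$ on the graph intersection $\{F_0 = H\}$ whose associated interval meets $(p^-,p)$'': this requires invoking coercivity of $H$ and continuity of $F_0, H$ to locate the intersection, and then a short argument that $(\tilde q^-, \tilde q^+)$ genuinely intersects $(p^-,p)$ rather than merely being nearby. A clean way to organize this is to first prove the auxiliary fact that for any $q$ with $F_0(q) \ge H(q) > H(p)$ one has $\underline R F_0 > H(p)$ on $(q^-,q^+)$ (this is essentially Lemma~\ref{lem:relax-prop}(ii) composed with monotonicity), so that the limiter/characteristic exclusion conditions become statements purely about where $\underline R F_0$ — equivalently $F$, since $F = \underline R F$ on $\{F \le H\}$ and we are comparing against the plateau $\{F = H(p)\}$ — exceeds the level $H(p)$; once phrased that way, both inclusions reduce to the observation that a non-increasing continuous function has pairwise-disjoint level plateaus, which is exactly the content already isolated in Corollary~\ref{cor::11} and the remark following Definition~\ref{defi::3}.
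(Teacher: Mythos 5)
Your reverse inclusion \(\chi^-(\mathfrak R F_0)\subset A^-_{F_0}\) is essentially sound (and invoking Corollary~\ref{cor::30} to get \(H(p)\le F_0(p)\) is exactly right), but the forward inclusion \(A^-_{F_0}\subset\chi^-(\mathfrak R F_0)\) as you sketch it does not work, and the problem is not mere bookkeeping. Your pivot is the claim that the limiter condition forces \(\overline R F_0(p)=H(p)\). This is false in general: a negative limiter point satisfies \(F_0(p)\ge H(p)\), and on the set \(\{F_0\ge H\}\) one has \(\overline R F_0=F_0\) exactly (Remark~\ref{rem:relax-prop-1}), so \(\overline R F_0(p)=F_0(p)\), which may strictly exceed \(H(p)\). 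Your attempted contradiction also aims at the wrong target: it manufactures a point \(\tilde q\) with \(H(\tilde q)=F_0(\tilde q)<H(p)\), whereas the exclusion clause in Definition~\ref{defi::3}(ii) only constrains points \(q\) with \(F_0(q)\ge H(q)>H(p)\); a point below the level \(H(p)\) violates nothing. Finally, even if you knew \(\overline R F_0=H(p)\) on a \emph{left}-neighborhood of \(p\), this would not give \(F(p)=H(p)\), because \(F(p)=\underline R(\overline R F_0)(p)=\sup_{q\ge p}(\overline R F_0\wedge H)(q)\) depends only on values to the \emph{right} of \(p\).

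The missing idea is that at a negative limiter point one must look rightward through \(\underline R\), not leftward through \(\overline R\). Since \(F_0(p)\ge H(p)\), definition \eqref{eq:defi-relax} gives \(F(p)=\underline R F_0(p)\); Lemma~\ref{lem:opti-loc}\ref{l:loc} provides a minimal \(q\ge p\) with \(\underline R F_0(p)=(F_0\wedge H)(q)=H(q)\), \(F_0(q)\ge H(q)\) and \(H<H(q)\) on \([p,q)\). Combining with \(H<H(p)\le H(q)\) on \((p^-,p)\) yields \(q^-\le p^-<p\le q\), so \((q^-,q^+)\cap(p^-,p)\ne\emptyset\); if \(q>p\) then \(H(q)>H(p)\) and \(q\) violates the exclusion clause, hence \(q=p\) and \(F(p)=H(p)\), which together with \(p^-<p\) gives \(p\in\chi^-(F)\). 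This is the paper's Step~2. For your reverse inclusion, note also that the assertion \(F\ge H(q)>H(p)\) on all of \((q^-,q^+)\) is not an instance of Lemma~\ref{lem:relax-prop} (the offending \(q\) need not be a characteristic point of \(\underline R F_0\)); it is true but requires a separate estimate on \((q,q^+)\). The paper sidesteps this by deducing \(q<p^-\) and \(q^+<p\) from monotonicity of \(F\), whence \(p^-\in(q,q^+]\) and \(H(p)=H(p^-)\ge H(q)\), contradicting \(H(q)>H(p)\) directly.
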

\begin{proof}
We only do the proof for negative characteristic points since the proof for positive ones is very similar. Let $F=\mathfrak RF_0$.
\smallskip

\noindent {\sc Step 1: negative characteristic points are negative limiter points.}
Let \(p\in \chi^- (F).\) We have in particular $p^- < p$ and $H(p) =F (p)$. 
Then Corollary~\ref{cor::11} implies that
\begin{equation}\label{eq::12}
F=\mathfrak R F_0=\mathrm{constant}=H(p) \text{ in }  [p^-,p].
\end{equation}
We argue by contradiction and assume that $p\not\in A^-_{F_0}$. This means that there exists some $q\in \R$ such that
\begin{equation}\label{eq::5}
F_0(q)\ge H(q) > H(p)\quad \text{and} \quad (q^-,q^+)\cap (p^-,p)\not=\emptyset.
\end{equation}
Then \eqref{eq::5}  and \eqref{eq::1} imply in particular
\begin{equation}\label{eq::3}
F(q)\ge H(q) > H(p) =F(p)=F(p^-).
\end{equation}
This implies in particular that
\(q<p^-.\)

We next prove that  $p> q^+$. In order to do so, we first justify the fact that  $p\not\in [q,q^+]$.
Assume by contradiction that $p\in [q,q^+]$. Then this implies $H(p)\ge H(q)$, which contradicts \eqref{eq::3}. 
Then $p \not\in [q,q^+]$. If $p \le q$ then by monotonicity we have \(F(p)\ge F(q)\)
that contradicts \eqref{eq::3}. Hence  $p> q^+$. 

We deduce from \eqref{eq::5} and $q < p^-$ and $p > q^+$ that 
\[q^-\le q<p^-\le q^+<p.\]
This implies that \(H(p)=H(p^-)> H(q)=H(q^+)\), but  this is in contradiction with \eqref{eq::3}.
Hence $p \in A^-_{F_0}$. 
\smallskip

\noindent {\sc Step 2: negative limiter points are negative characteristic points.}
For $p\in A^-_{F_0}$, we have,
\begin{eqnarray}
  \nonumber
  && p^- < p \\
  \label{eq::13}
  && H< H(p) \le F_0(p) \le F_0 \text{ in } (p^-,p)\\
  \nonumber
  &&F(p)= \mathfrak R F_0(p) = \underline R F_0 (p) \ge H(p).
\end{eqnarray}
From \ref{l:loc} of Lemma \ref{lem:opti-loc}, we know that there exists $q\ge p$ minimal such that
\begin{equation}\label{eq::6}
\underline  R F_0(p)=(F_0 \wedge H)(q)
\end{equation}
\[\text{ with } \qquad
  \begin{cases}
    F_0(q)\ge H(q)&\\
    \underline R F_0= \mathrm{constant} = H(q) &\text{ in } [p,q],\\
    H(q)>H&\text{ in } [p,q).
  \end{cases}
\]
Hence by monotonicity of $F_0$, we have $F_0\ge H$ on $[p,q]$, and then
\[F=\underline RF_0=\mathrm{constant} =H(q)> H\quad \text{on}\quad [p,q).\]
Combined with \eqref{eq::13}, this implies
\[H<H(q) \quad \text{on}\quad (p^-,q)\quad \text{with}\quad p^-<p\le q.\]
We can now consider the lower point $q^-$ associated with $q$. We deduce from the previous inequality that
\[ q^-\le p^-< p\le q.\]
In particular \((q^-,q^+)\cap (p^-,p)\not= \emptyset.\)

If $q>p$, then we have \(F_0(q)\ge H(q) > H(p)\), in contradiction with the fact that $p\in A^-_{F_0}$.

We thus conclude that $q=p$, then \eqref{eq::6} shows that \(F(p)=H(p)\). Combined with \eqref{eq::13}, this yields
$p\in \chi^-(F)$. 
\end{proof}
We can now state and prove that the two relaxation operators are in fact the
same one. 
\begin{theo}[Relaxation operators coincide] \label{t:relax-coincide}
We assume that $H$ is continuous and coercive, and that $F_0$ is continuous, nonincreasing, and semi-coercive.
Then $\mathfrak R F_0=\mathfrak JF_0$.
\end{theo}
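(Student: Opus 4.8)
The plan is to exploit Proposition~\ref{p:lim-char}, which already identifies the characteristic points of $\mathfrak R F_0$ with the limiter points of $F_0$, and then to show that both $\mathfrak R F_0$ and $\mathfrak J F_0$ are the unique continuous non-increasing function that agrees with $H$ away from the plateaux $[p_\alpha^-,p_\alpha^+]$ over limiter points and is constant equal to $H(p_\alpha)$ on each such plateau. Write $F := \mathfrak R F_0$ and recall from Proposition~\ref{p:lim-char} that $\chi^\pm(F) = A^\pm_{F_0}$, and from Proposition~\ref{pro::5} that $\mathfrak J F_0$ is well-defined, continuous and non-increasing; from Lemma~\ref{lem:relax-op} the same holds for $F$.

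First I would pin down the behaviour of $F$ on the limiter plateaux. Let $p_\alpha \in A_{F_0}$. If $p_\alpha \in A^-_{F_0} = \chi^-(F)$, then Corollary~\ref{cor::11} gives $F \equiv H(p_\alpha)$ on $[p_\alpha^-,p_\alpha]$; if $p_\alpha \in A^+_{F_0} = \chi^+(F)$, then $F \equiv H(p_\alpha)$ on $[p_\alpha,p_\alpha^+]\cap\R$. Now observe that in Guerand's construction a single $p_\alpha$ may be simultaneously a positive and a negative limiter point describing the two ends of one plateau; in the notation of Definition~\ref{defi:guerand-relaxed} the relevant interval is $[p_\alpha^-,p_\alpha^+]$. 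I would therefore check that when $[a,b]$ is a maximal plateau of $F$ (i.e. a maximal interval on which $F$ is constant with $F>H$ on its interior on one side, matching $H$ at the endpoints), its left endpoint lies in $\chi^-(F)$ and its right endpoint in $\chi^+(F)$ — this is exactly the phenomenon pointed out in case $d)$ of Figure~\ref{F4} — and then $[a,b] = [a^-,a^+] = [b^-,b^+]$, so $F = H(a) = H(b)$ on $[a,b]$ matches the value $H(p_\alpha)$ prescribed by $\mathfrak J F_0$. This uses the definition of $p^\pm$ together with Corollary~\ref{cor::14}, which says that $F$ is locally constant wherever $F \ne H$.

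Next I would handle the complement. Suppose $F(p) = H(p)$, i.e. $p$ is not in the interior of any plateau of $F$. I claim $p \notin (p_\alpha^-,p_\alpha^+)$ for every $p_\alpha \in A_{F_0}$, so that $\mathfrak J F_0(p) = H(p) = F(p)$ as well. Indeed if $p$ were interior to some $(p_\alpha^-,p_\alpha^+)$, then by the plateau analysis above $F$ is constant equal to $H(p_\alpha)$ there with $H < H(p_\alpha)$ on at least a one-sided punctured neighbourhood, which forces $F(p) > H(p)$, a contradiction. Conversely, if $F(p) > H(p)$ then Corollary~\ref{cor::14} makes $F$ locally constant near $p$, and extending this plateau to its maximal interval $[a,b]$ and invoking the endpoint-characteristic-point fact shows $[a,b]$ is precisely some $[p_\alpha^-,p_\alpha^+]$ with $p \in (a,b)$, whence $\mathfrak J F_0(p) = H(p_\alpha) = F(a) = F(p)$. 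Combining the two cases pointwise gives $F = \mathfrak J F_0$ everywhere.

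The main obstacle, and the step I would spend the most care on, is the bookkeeping at the endpoints of plateaux: making rigorous that every maximal plateau $[a,b]$ of $\mathfrak R F_0$ is of the form $[p_\alpha^-,p_\alpha^+]$ for a single limiter point $p_\alpha$ with $H(a)=H(b)=H(p_\alpha)$, rather than, say, two distinct limiter points with different $H$-values whose plateaux abut. Here one must use that $H$ is continuous and that on $(a,b)$ we have $H < H(a) = H(b)$ strictly (else the plateau is not maximal), which forces $a^+ = b^- $ inside $[a,b]$ in the right way and identifies the endpoints as the lower/upper points of a common $p_\alpha$; the disjointness of the intervals $(p_\alpha^-,p_\alpha^+)$ recalled after Definition~\ref{defi::3} then guarantees there is no overlap or double counting. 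Everything else is a direct assembly of Corollaries~\ref{cor::14} and \ref{cor::11} and Proposition~\ref{p:lim-char}.
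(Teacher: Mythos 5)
Your overall architecture is the paper's: use Proposition~\ref{p:lim-char} together with Corollary~\ref{cor::11} to evaluate $\mathfrak R F_0$ on the limiter intervals, then use Corollary~\ref{cor::14} and a maximal-interval argument to show that $\{\mathfrak R F_0\neq H\}$ is covered by those intervals, where both functions take the value $H(p_\alpha)$. However, the pivotal step you single out --- that a maximal plateau $[a,b]$ of $\mathfrak R F_0$ has its left endpoint in $\chi^-(\mathfrak R F_0)$ and its right endpoint in $\chi^+(\mathfrak R F_0)$ --- is false, and falsely stated in the direction that matters. If $a\in\chi^-(\mathfrak R F_0)$, Corollary~\ref{cor::11} forces $\mathfrak R F_0$ to be constant on $[a^-,a]$ with $a^-<a$, so the plateau extends strictly to the left of $a$ and $[a,b]$ was not maximal; symmetrically, the right endpoint of a maximal plateau is never in $\chi^+$. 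The correct pairing (this is Step~2 of the paper's proof) is the reverse and is one-sided: on a maximal interval $(a,b)$ of $\{\mathfrak R F_0\neq H\}$ with $\mathfrak R F_0>H$ inside, coercivity of $H$ and monotonicity of $\mathfrak R F_0$ give $a,b\in\R$ and $H(a)=H(b)=\mathfrak R F_0>H$ on $(a,b)$, so it is the \emph{right} endpoint $b$ that lies in $\chi^-(\mathfrak R F_0)=A^-_{F_0}$, with $b^-=a$; hence $(a,b)\subset[b^-,b^+]$ and $\mathfrak J F_0=H(b)=\mathfrak R F_0$ there. When $\mathfrak R F_0<H$ inside, it is the \emph{left} endpoint that lies in $\chi^+$ with $a^+=b$. (In the configuration of Figure~\ref{F4}~d), the relevant characteristic point is the interior crossing point of the plateau with $H$, which belongs to both $\chi^-$ and $\chi^+$; it is not the two endpoints.) As written, your endpoint claim cannot be proved, so the identification of plateaux with Guerand intervals --- the heart of the second half of your argument --- does not close; once corrected it does, and you recover exactly the paper's proof.

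Two smaller points. You only treat the case $\mathfrak R F_0(p)>H(p)$; the case $\mathfrak R F_0(p)<H(p)$ is needed and is where $\chi^+$ and $A^+_{F_0}$ actually enter. Also, your claim that $\mathfrak R F_0(p)=H(p)$ implies $p\notin(p_\alpha^-,p_\alpha^+)$ for every limiter point fails at $p=p_\alpha$ itself when $p_\alpha^-<p_\alpha<p_\alpha^+$; this is harmless since $\mathfrak J F_0(p_\alpha)=H(p_\alpha)$ anyway, but the statement you actually need is that $\mathfrak J F_0(p)=H(p)$ whenever $\mathfrak R F_0(p)=H(p)$, which also uses the endpoint identities $H(p_\alpha^\pm)=H(p_\alpha)$ recorded in Lemma~\ref{lem:relax-prop}.
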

\begin{proof}
  We set $E = E^- \cup E^+$ with
\[E^-:=\bigcup_{\alpha\in I} [p_\alpha^-,p_\alpha] \quad \text{ and } \quad E^+:=\bigcup_{\alpha\in I} [p_\alpha,p_\alpha^+]\]
where $I$ is an at most countable set (see Proposition \ref{pro::5}) such that
\[A_{F_0}=\bigcup_{\alpha\in I} \left\{p_\alpha\right\}\]
with $p_\alpha^-\le p_\alpha \le p_\alpha^+$ and $p_\alpha^-< p_\alpha^+$.
We also set $F:=\mathfrak R F_0$.
\smallskip

\noindent {\sc Step 1: relaxation operators coincide in $E$.}
We only prove the result in $E^-$ since it can be obtained in $E^+$ similarly.
In the case where $p_\alpha\in A^-_{F_0}$, Proposition~\ref{p:lim-char} implies that $p_\alpha\in \chi^- (F)$, that is to say $p_\alpha^-< p_\alpha$ and, using also Corollary \ref{cor::11},
\[
H< F(p_\alpha)=H(p_\alpha)=H(p_\alpha^-) =\mathfrak JF_0 \text{ in } (p_\alpha^-,p_\alpha).
\]
Since $F$ is non-increasing, we have
\[\mathfrak R F_0=F> H \text{ in }  (p_\alpha^-,p_\alpha).\]
This implies that
\begin{equation}\label{eq::15}
  H<F=\underline R F_0 \le F_0 \text{ in } (p_\alpha^-,p_\alpha).
\end{equation}
Thanks to the continuity of $H$, $F$ and $\underline R F_0$, we
deduce from \eqref{eq::15} that
\[ H(p_\alpha) = F (p_\alpha) = \underline R F_0 (p_\alpha).\]
Hence 
\begin{align*}
  F(p_\alpha^-)& =\underline R F_0(p_\alpha^-)\\
               &=\max\left\{\sup_{q'\in [p_\alpha^-,p_\alpha)} (F_0 \wedge H)(q'),\ \sup_{q'\ge p_\alpha} (F_0 \wedge H)(q')\right\}\\
               & \le \max(H(p_\alpha),\underline RF_0(p_\alpha))\\
               & =F(p_\alpha).
\end{align*}
 From the monotonicity of $F$, we deduce that
\[ F=\mathrm{constant} =H(p_\alpha)= \mathfrak JF_0 \text{ in } [p_\alpha^-,p_\alpha].\]

\noindent {\sc Step 2: $\{ F \neq H\}$ is contained in $E$.}
If $p\in \left\{F\not= H\right\}$, then we know from Corollary~\ref{cor::14} that there exists $\varepsilon>0$ such that
\[F=\mathrm{constant} \text{ in }  (p-\varepsilon,p+\varepsilon).\]
We can then consider the largest interval $(a,b)\ni p$ such that
\[(a,b)\subset \left\{F\not= H\right\}.\]
Then $F$ is constant in $(a,b)$. The fact that $F$ is semi-coercive implies that $a > - \infty$. 
We distinguish two cases. 

If $F(p)> H(p)$, then from the coercivity of $H$ and the monotonicity of $F$, we have $a,b\in \R$ and
\[H(a)=F(a) = F(p)=F(b)=H(b)> H \text{ in } (a,b).\]
This implies that $b\in \chi^-(F)=A^-_{F_0}$ and $a:=b^-$ and in turn
$(a,b)\subset E^-$. In particular, $p \in E^-$ in this case. 

If $F(p)< H(p)$, we can then argue as in the previous case and get, thanks to Proposition~\ref{p:lim-char}, that
\[a\in \chi^+(F)=A^+_{F_0},\quad b=a^+\in \R\cup \left\{+\infty\right\}\]
and thus $(a,b)\subset E^+$. In particular, $p \in E^+$ in this case. 
\medskip

\noindent {\sc Step 3: conclusion.}
We proved that $F=\mathfrak JF_0$  in $E$ and also that
$F=H$ outside $E$.
Since $\mathfrak JF_0=H$ outside $E$ too (by definition), we thus get $F = \mathfrak JF_0$ everywhere. 
\end{proof}

\section{Godunov fluxes}\label{s5}

\subsection{Definition of Godunov fluxes}

We still consider a coercive and continuous Hamiltonian $H$ and we recall the standard Godunov flux associated to $H$ defined by 
\[ G(q,p)=\begin{cases}
\displaystyle    \max_{[p,q]} H & \text{ if } p\le q,\\
\displaystyle     \min_{[q,p]} H & \text{ if } p\ge q.
  \end{cases}
\]
In particular, $G$ is non-decreasing in the first variable and non-increasing in the second one. Moreover, we have $G(p,p)=H(p)$.
We  define next the action of the Godunov flux on a semi-coercive, continuous and non-increasing function $F_0$.
 \begin{pro}[Godunov's operator]\label{pro:n1}
 Assume that $F_0$ is semi-coercive, continuous and non-increasing and that $H$ is continuous and coercive. Let $p\in \R$, then the following properties hold true.
 \begin{enumerate}[label=(\roman*)]
 \item \label{lambdaq}
   There exists at least one $q \in \R$ such that $F_0(q)=G(q,p)$. The common value is denoted by  $\lambda_q$.
 \item \label{lambda}
   The value $\lambda_q$ defined above is independent on $q$. We denote this unique value by $\lambda=\lambda(p)=:(F_0G)(p)$
 \end{enumerate}
\end{pro}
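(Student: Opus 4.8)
The strategy is to fix $p\in\R$ and study the single real function $\Phi(q):=F_0(q)-G(q,p)$; both assertions will drop out of its monotonicity, continuity and behaviour at $\pm\infty$. The first step is to record the elementary properties of $q\mapsto G(q,p)$. It is continuous on $\R$: on $[p,+\infty)$ it coincides with $q\mapsto\max_{[p,q]}H$ and on $(-\infty,p]$ with $q\mapsto\min_{[q,p]}H$, both continuous by continuity of $H$, and both equal to $H(p)$ at $q=p$. It is non-decreasing: enlarging $[p,q]$ (for $q\ge p$) can only increase the maximum, shrinking $[q,p]$ as $q$ increases to $p$ can only increase the minimum, and $G(q_1,p)\le H(p)\le G(q_2,p)$ whenever $q_1\le p\le q_2$. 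Finally, coercivity of $H$ gives $G(q,p)\ge H(q)\to+\infty$ as $q\to+\infty$, while $G(q,p)\le H(p)$ for every $q\le p$.

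Consequently $\Phi$ is continuous and non-increasing, being the sum of the non-increasing functions $F_0$ and $-G(\cdot,p)$. Moreover $\Phi(q)\ge F_0(q)-H(p)\to+\infty$ as $q\to-\infty$, since semi-coercivity of the non-increasing function $F_0$ means exactly that $F_0(q)\to+\infty$ as $q\to-\infty$; and $\Phi(q)\le F_0(p)-G(q,p)\to-\infty$ as $q\to+\infty$. By the intermediate value theorem $\Phi$ vanishes at some $q\in\R$, which is precisely the identity $F_0(q)=G(q,p)$ of assertion \ref{lambdaq}, the common value being $\lambda_q$.

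For assertion \ref{lambda}, let $q_1\le q_2$ be two roots of $\Phi$. Since $\Phi$ is non-increasing and vanishes at both endpoints, it vanishes identically on $[q_1,q_2]$, so $F_0=G(\cdot,p)$ on that interval. But there $F_0$ is non-increasing while $G(\cdot,p)$ is non-decreasing, and an equality between a non-increasing and a non-decreasing function on an interval forces both to be constant on it; hence $F_0(q_1)=F_0(q_2)$, \textit{i.e.} $\lambda_{q_1}=\lambda_{q_2}$. Thus $\lambda_q$ does not depend on the chosen root, and we set $\lambda(p)=(F_0G)(p)$ to be this common value.

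The only point deserving genuine care is the first step: verifying the continuity and monotonicity of $q\mapsto G(q,p)$ across the switch at $q=p$ and pinning down its limits at $\pm\infty$. Once that is in place, everything else is an immediate consequence of the intermediate value theorem together with the monotonicity of $\Phi$, so I do not expect any real obstacle.
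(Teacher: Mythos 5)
Your proposal is correct and follows essentially the same route as the paper: both study the continuous non-increasing function $q\mapsto F_0(q)-G(q,p)$, establish the limits $+\infty$ at $-\infty$ (via semi-coercivity of $F_0$) and $-\infty$ at $+\infty$ (via coercivity of $H$), and conclude existence by the intermediate value theorem. For uniqueness the paper argues by contradiction directly from the opposite monotonicities of $F_0$ and $G(\cdot,p)$, which is only a cosmetic variant of your observation that the difference vanishes identically between two roots.
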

 \begin{proof}
We first prove \ref{lambdaq}. Given $p \in \R$, 
   the function $\phi(q)=F_0(q)-G(q,p)$ is continuous and non-increasing.
 On the one hand, if $q\le p$, then $G(q,p)\le H(p)$ and $\phi(q)\ge F_0(q)-H(p)$. Using that $F_0$ is semi-coercive, we deduce that 
 \[
 \lim_{q\to-\infty}\phi(q)=+\infty.
 \]
 On the other hand, if $q\ge p$, using that $F_0(q)\le F_0(p)<+\infty$, the fact that $G(q,p)=\max_{[p,q]}H\ge H(q)$ and the fact that $H$ is coercive, we deduce that
 \[
 \lim_{q\to+\infty}\phi(q)=-\infty.
 \]
Since $\phi$ is continuous and non-increasing, we deduce the existence of a $q$ such that $\phi(q)=0$, that is to say that
$F_0(q)=G(q,p).$
\medskip

We now turn to \ref{lambda}.
By contradiction, assume that there exist $q_1$ and $q_2$ such that 
\[\lambda_{q_1}=F_0(q_1)=G(q_1,p)>\lambda_{q_2}=F_0(q_2)=G(q_2,p).\]
Since $F_0$ is non-increasing, we deduce that $q_1<q_2$. Using that $G$ is non-decreasing in its first argument, we deduce that $G(q_1,p)\le G(q_2, p)$ which is a contradiction.
 \end{proof}
 
 The goal is now to prove that $\mathfrak R F_0=F_0 G$. More precisely, we have the following theorem. 
\begin{theo}[Relaxation operator coincide with Godunov's operator]\label{th:n1}
Assume that $F_0$ is semi-coercive, continuous and non-increasing and that $H$ is continuous and coercive. Then 
\[\mathfrak RF_0=F_0G.\]
\end{theo}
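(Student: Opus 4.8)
The plan is to show the two characterizations of $\mathfrak R F_0$ agree by exploiting the structural description of $\mathfrak R F_0$ already obtained: namely that $\mathfrak R F_0 = H$ outside the plateaux $[p_\alpha^-,p_\alpha^+]$ attached to characteristic points, and that on such a plateau $\mathfrak R F_0 \equiv H(p_\alpha)$ (Corollary~\ref{cor::11}, together with Proposition~\ref{p:lim-char} identifying $\chi^\pm(\mathfrak R F_0)$ with $A^\pm_{F_0}$, and Step~2 of the proof of Theorem~\ref{t:relax-coincide} showing $\{\mathfrak R F_0 \neq H\} \subset E$). So it suffices to check, for each fixed $p$, that the quantity $\lambda = (F_0G)(p)$ furnished by Proposition~\ref{pro:n1} equals $\mathfrak R F_0(p)$. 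I would first record that $F_0G$ is well-defined thanks to Proposition~\ref{pro:n1}, and that $G(p,p)=H(p)$, so that whenever $F_0(p)=H(p)$ one may take $q=p$ and conclude $(F_0G)(p)=H(p)=\mathfrak R F_0(p)$ immediately (using $\mathfrak R F_0 = F_0$ on $\{F_0=H\}$, Remark~\ref{rem:relax-prop-1}).

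The heart of the argument is the case $F_0(p) \neq H(p)$; by symmetry I treat $F_0(p) > H(p)$, where $\mathfrak R F_0(p) = \underline R F_0(p)$. Here I would use the optimality description in Lemma~\ref{lem:opti-loc}\ref{l:loc}: there is a minimal $q^* \ge p$ with $\underline R F_0(p) = (F_0 \wedge H)(q^*)$, $F_0(q^*) \ge H(q^*)$, $\underline R F_0 \equiv H(q^*)$ on $[p,q^*]$, and $H < H(q^*)$ on $[p,q^*)$. The claim is that this same $q^*$ witnesses the Godunov identity, i.e. $F_0(q^*) = G(q^*,p)$ with common value $H(q^*) = \underline R F_0(p)$. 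Since $q^* \ge p$, we have $G(q^*,p) = \max_{[p,q^*]} H = H(q^*)$ by the strict-inequality statement $H < H(q^*)$ on $[p,q^*)$. So it remains only to check $F_0(q^*) = H(q^*)$. We already know $F_0(q^*) \ge H(q^*)$; and $F_0(q^*) \le F_0(p)$... this does not immediately give equality, so instead I argue: if $F_0(q^*) > H(q^*)$ then $(F_0\wedge H)(q^*) = H(q^*)$ is achieved, but by continuity of $F_0$ and $H$ and $F_0 \ge H$ near $q^*$ (to the left, by monotonicity and $H<H(q^*)$; to the right we do not need it) one can push slightly past $q^*$ to get a strictly larger value of $F_0 \wedge H = H$, contradicting either the sup defining $\underline R F_0(p)$ if $H$ keeps increasing, or — if $H$ has a local max at $q^*$ — we directly have $G(q^*,p)=H(q^*)$ anyway and $F_0(q^*)=H(q^*)$ must be checked via the minimality of $q^*$: if $F_0(q^*)>H(q^*)$ then $q^*$ is not a point where $F_0$ meets $H$, but $\underline R F_0$ is locally constant there (Lemma~\ref{lem:opti-loc}\ref{l:local}), and one can decrease $q^*$, contradicting minimality. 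Either way $F_0(q^*)=H(q^*)$, hence $(F_0G)(p) = H(q^*) = \underline R F_0(p) = \mathfrak R F_0(p)$. The case $F_0(p) < H(p)$ is handled identically using Lemma~\ref{lem:opti-locb} and the $p \le q$, $G(q,p) = \min_{[q,p]} H$ branch of the Godunov flux.

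An alternative, possibly cleaner route I would try in parallel: combine Theorem~\ref{th::GR}/Theorem~\ref{t:relax-coincide} (so $\mathfrak R F_0 = \mathfrak J F_0$) with a direct computation that $F_0G = \mathfrak J F_0$ using the limiter-point description — on a plateau $[p_\alpha^-,p_\alpha^+]$ any $q$ in the range meets $H$ at level $H(p_\alpha)$ and the Godunov flux over an interval spanning $p_\alpha$ equals $H(p_\alpha)$, while off the plateaux $F_0 = H$ up to the sign constraint forces $q=p$. I expect the main obstacle to be the careful bookkeeping at the boundary between "$\mathfrak R F_0 = H$" and "$\mathfrak R F_0 = $ plateau value", i.e. ensuring that the $q$ selected for the Godunov identity is exactly an optimal point in the relaxation formula and that the strict inequalities on $H$ translate correctly into $G(q,p)$ being the one-sided max (resp. min) equalling $H(q)$; uniqueness of $\lambda$ from Proposition~\ref{pro:n1}\ref{lambda} is what lets us not worry about which optimal $q$ we picked. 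I would close by noting the argument is symmetric under the interchange of $\underline R/\overline R$, $\sup/\inf$, $\max/\min$ and $\{F_0>H\}/\{F_0<H\}$, so only one case need be written in full.
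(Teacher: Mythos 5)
Your strategy (pointwise verification, splitting on the sign of $F_0(p)-H(p)$ and invoking the uniqueness of $\lambda$ from Proposition~\ref{pro:n1}) is reasonable in outline, but the central step of the case $F_0(p)>H(p)$ is wrong: the minimal optimal point $q^*$ of Lemma~\ref{lem:opti-loc} does \emph{not} in general satisfy $F_0(q^*)=H(q^*)$, so it does not witness the Godunov identity. Lemma~\ref{lem:opti-loc} only gives $F_0(q^*)\ge H(q^*)$, and strict inequality genuinely occurs. Concretely, take $H(q)=2-|q|$ for $|q|\le 1$ and $H(q)=|q|$ otherwise, $F_0(q)=3-q$, and $p=0$. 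Then $F_0(0)=3>2=H(0)$, $\underline R F_0(0)=\sup_{q\ge 0}(F_0\wedge H)(q)=2$ is attained only at $q^*=0$, yet $F_0(q^*)=3\neq 2=H(q^*)=G(q^*,p)$. The actual Godunov witness is $q=1$, where $F_0(1)=2=\max_{[0,1]}H=G(1,0)$. Your attempted repair (``$\underline R F_0$ is locally constant there, so one can decrease $q^*$, contradicting minimality'') does not work: local constancy of $\underline R F_0$ near $q^*$ says nothing about where the supremum defining $\underline R F_0(p)$ is attained, and in the example $q^*=p$ cannot be decreased at all. The correct witness is the (generally larger) point where $F_0$ crosses the running maximum $q\mapsto\max_{[p,q]}H$; one then still has to check that the common value there equals $H(q^*)$, which requires an extra argument (if $F_0(q)=\max_{[p,q]}H=M>H(q^*)$ at the crossing point, pick $\bar q\in[p,q]$ with $H(\bar q)=M$ and use monotonicity of $F_0$ to get $(F_0\wedge H)(\bar q)=M>\underline R F_0(p)$, a contradiction). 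None of this is in your write-up, so as it stands the proof does not go through, although it is salvageable along these lines.

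For comparison, the paper takes a structurally different route: it introduces set-valued Godunov semi-fluxes $\underline G,\overline G$, shows $F_0\underline G=\underline R F_0$ and $F_0\overline G=\overline R F_0$ (Proposition~\ref{pro:n2}) --- where the witness is precisely the crossing point with the running max/min, avoiding the pitfall above --- and then concludes via the composition identity $(F_0\overline G)\underline G=F_0G$ (Proposition~\ref{pro:n3}) together with $\mathfrak R F_0=\underline R(\overline R F_0)$ from Lemma~\ref{lem:relax-op}. If you want to keep your direct pointwise approach, you must replace the claim ``$q^*$ witnesses the Godunov identity'' by the intermediate-value argument sketched above; your second, alternative route through $\mathfrak J F_0$ is only a sketch and would need the same care at the plateau endpoints.
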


In order to prove this theorem, we need to introduce the Godunov semi-fluxes. This is done in the next section.
The proof of Theorem \ref{th:n1} is postponed until Subsection \ref{sec:proof-th:n1}.
 
\subsection{Godunov semi-fluxes}

We introduce the Godunov semi-fluxes, $\underline G$ and $\overline G$, which are set-valued applications defined by
\[ \underline G(q,p)=
  \begin{cases}
    \{-\infty\}& \text{ if } q<p, \medskip\\
    {[-\infty, H(p)]}&\text{ if }q=p, \medskip\\
   \displaystyle \left\{\max_{[p,q] }H \right\}&\text{ if } q>p
  \end{cases}
\]
and
\[
  \overline G(q,p)=
  \begin{cases}
    \displaystyle \left\{\min_{[q,p] }H \right\}&\text{ if }q<p, \medskip\\
    {[ H(p), +\infty]}&\text{ if }q=p, \medskip\\
    \{+\infty\} &\text{ if } q>p.
  \end{cases}
\]
As before, we can define the action of these semi-fluxes on non-increasing semi-coercive continuous functions. 
\begin{pro}[Lower Godunov operator $F_0 \underline G$] \label{p:sub-G}
  Assume that $F_0$ is semi-coercive, continuous and non-increasing and that $H$ is continuous and coercive. Let $p\in \R$.
  We define the sets
\[\underline Q:=\{q\in \R, F_0(q)\in \underline G(q,p)\} \quad \text{ and } \quad \underline \Lambda:=\{F_0(q), \; q\in \underline Q\}.\]
 Then the following properties hold true.
 \begin{enumerate}[label=(\roman*)]
 \item \label{Q} The set $\underline Q$ is non-empty and contained in $[p,+\infty[$.
 \item \label{Lambda} The set $\underline \Lambda$ is reduced to a singleton that we denote by $\{(F_0\underline G)(p)\}$. 
 \end{enumerate}
\end{pro}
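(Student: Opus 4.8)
The plan is to mimic the structure of the proof of Proposition~\ref{pro:n1}, adapting it to the set-valued semi-flux $\underline G$. First I would establish \ref{Q}. The inclusion $\underline Q \subset [p,+\infty)$ is immediate from the definition of $\underline G$: if $q<p$ then $\underline G(q,p)=\{-\infty\}$, and since $F_0$ is real-valued we cannot have $F_0(q)\in\{-\infty\}$; hence any $q\in\underline Q$ satisfies $q\ge p$. For non-emptiness, I would introduce the auxiliary function $\psi(q):=F_0(q)-\max_{[p,q]}H$ defined for $q\ge p$. This $\psi$ is continuous and non-increasing on $[p,+\infty)$ (it is the difference of the non-increasing $F_0$ and the non-decreasing $q\mapsto\max_{[p,q]}H$). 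At $q=p$ we get $\psi(p)=F_0(p)-H(p)$. If $\psi(p)\le 0$, i.e.\ $F_0(p)\le H(p)$, then $F_0(p)\in[-\infty,H(p)]=\underline G(p,p)$, so $p\in\underline Q$ and we are done. If $\psi(p)>0$, then since $\max_{[p,q]}H\ge H(q)\to+\infty$ as $q\to+\infty$ by coercivity of $H$, while $F_0(q)\le F_0(p)<+\infty$ by monotonicity, we get $\psi(q)\to-\infty$; by the intermediate value theorem there is $q_0>p$ with $\psi(q_0)=0$, i.e.\ $F_0(q_0)=\max_{[p,q_0]}H\in\underline G(q_0,p)$, so $q_0\in\underline Q$.

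Next I would prove \ref{Lambda}, that $\underline\Lambda$ is a singleton. Argue by contradiction: suppose $q_1,q_2\in\underline Q$ with $F_0(q_1)\ne F_0(q_2)$, say $F_0(q_1)>F_0(q_2)$. Since $F_0$ is non-increasing this forces $q_1<q_2$, and both lie in $[p,+\infty)$. The key point is that $F_0(q_i)\in\underline G(q_i,p)$ forces $F_0(q_i)$ to equal a one-sided Godunov value that is monotone in $q_i$: precisely, for $q\ge p$ the (finite part of the) set $\underline G(q,p)$ is $\max_{[p,q]}H$ (with the convention that at $q=p$ the admissible finite values form $[-\infty,H(p)]$, whose supremum is $H(p)=\max_{[p,p]}H$). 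One must be slightly careful at the endpoint $q=p$: if $q_1=p$ then $F_0(q_1)\le H(p)=\max_{[p,q_1]}H\le\max_{[p,q_2]}H=F_0(q_2)$, contradicting $F_0(q_1)>F_0(q_2)$; if $q_1>p$ then $F_0(q_1)=\max_{[p,q_1]}H\le\max_{[p,q_2]}H$, and the right-hand side is $F_0(q_2)$ when $q_2>p$, again a contradiction. This covers all cases (note $q_2>q_1\ge p$ so $q_2>p$ always), so $\underline\Lambda$ is a singleton, which we name $(F_0\underline G)(p)$.

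I do not expect a genuine obstacle here; the argument is essentially the same monotonicity-plus-IVT scheme as in Proposition~\ref{pro:n1}. The only mildly delicate point is bookkeeping around the ``degenerate'' value $q=p$, where $\underline G(p,p)$ is an interval $[-\infty,H(p)]$ rather than a point, and around the $-\infty$ value for $q<p$; one must make sure that an element of $\underline Q$ with $q=p$ still yields a value $F_0(p)\le H(p)$ that is comparable (via the one-sided Godunov value) with the values coming from $q>p$, and that the $\{-\infty\}$ branch is harmlessly excluded because $F_0$ is finite-valued. Once these conventions are handled cleanly, both \ref{Q} and \ref{Lambda} follow.
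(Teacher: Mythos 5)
Your proposal is correct and takes essentially the same route as the paper's proof: the same case distinction according to the sign of $F_0(p)-H(p)$ (with $p$ itself belonging to $\underline Q$ when $F_0(p)\le H(p)$, and the intermediate-value argument of Proposition~\ref{pro:n1} producing a zero strictly greater than $p$ otherwise), and the same monotonicity-based contradiction for the uniqueness of the value in $\underline\Lambda$. Your explicit bookkeeping at the endpoint $q=p$ is a slightly more detailed write-up of what the paper leaves to the reader, but it is not a different argument.
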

\begin{proof}
We first prove \ref{Q}. In order to do so, we distinguish two cases.
  
Suppose first that $F_0(p)> H(p)$. In that case, we remark that $\underline G(q,p)=\{G(q,p)\}$ for all $q > p$. Then, the proof
is the same as the one of Proposition \ref{pro:n1}. Indeed, if we define $\phi(q)=F_0(q)-G(q,p)$, then $\phi(p)=F_0(p)-G(p,p)=F_0(p)-H(p)>0$
and so the zero of $\phi$ defined in the proof of Proposition \ref{pro:n1}  is greater than $p$ and satisfies the desired condition.

Suppose now that $F_0(p)\le H(p)$. In that case, we remark that $p\in \underline Q$ since $\underline G(p,p)=[-\infty, H(p)]$.

 The proof of \ref{Lambda} follow the same lines as the one of \ref{lambda} from Proposition \ref{pro:n1}.
\end{proof}

In the same way, we have the following proposition concerning $\overline G$. Since the proof is similar to the
previous one, we skip it. 
\begin{pro}[Upper Godunov operator $F_0 \overline G$]
Assume that $F_0$ is semi-coercive, continuous and non-increasing and that $H$ is continuous and coercive. Let $p\in \R$. We define the sets
\[\overline Q:=\{q\in \R, F_0(q)\in \overline G(q,p)\} \quad \text{ and } \quad \overline \Lambda:=\{F_0(q), \; q\in \overline Q\}.\]
Then the following properties hold true
 \begin{enumerate}[label=(\roman*)]
 \item The set $\overline Q$ is non-empty and contained in $]-\infty, p]$.
 \item The set $\overline \Lambda$ is reduced to a singleton that we denote by $\{(F_0\overline G)(p)\}$. 
 \end{enumerate}
\end{pro}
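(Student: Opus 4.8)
The plan is to transcribe, mutatis mutandis, the proof of Proposition~\ref{p:sub-G}, exchanging the roles of $+\infty$ and $-\infty$, of the half-lines $[p,+\infty)$ and $(-\infty,p]$, and of $\max$ and $\min$. I would first dispose of the inclusion $\overline Q \subset (-\infty,p]$, which requires no case distinction: for $q>p$ one has $\overline G(q,p)=\{+\infty\}$ whereas $F_0(q)\in\R$, so no $q>p$ can belong to $\overline Q$.

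For the non-emptiness of $\overline Q$ I would split according to the sign of $F_0(p)-H(p)$, exactly as in the proof of Proposition~\ref{p:sub-G}. If $F_0(p)\ge H(p)$, then $F_0(p)\in[H(p),+\infty]=\overline G(p,p)$, so $p\in\overline Q$ and we are done. If instead $F_0(p)<H(p)$, then for every $q<p$ we have $\overline G(q,p)=\{G(q,p)\}$ with $G(q,p)=\min_{[q,p]}H$, and I would rerun the argument used for Proposition~\ref{pro:n1}: the map $\phi(q):=F_0(q)-G(q,p)$ is continuous and non-increasing on $(-\infty,p]$, satisfies $\phi(p)=F_0(p)-H(p)<0$, and tends to $+\infty$ as $q\to-\infty$ because there $G(q,p)\le H(p)$ while $F_0$ is semi-coercive. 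By the intermediate value theorem $\phi$ vanishes at some $q^*<p$, and then $F_0(q^*)=\min_{[q^*,p]}H\in\overline G(q^*,p)$, i.e. $q^*\in\overline Q$. This proves \textit{(i)}.

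For \textit{(ii)} I would argue by contradiction in the spirit of the proof of Proposition~\ref{pro:n1}\ref{lambda}. Suppose $q_1,q_2\in\overline Q$ with $F_0(q_1)>F_0(q_2)$; since $F_0$ is non-increasing this forces $q_1<q_2\le p$. If $q_2<p$, then $F_0(q_i)=\min_{[q_i,p]}H$ for $i=1,2$, and the inclusion $[q_2,p]\subset[q_1,p]$ gives $\min_{[q_1,p]}H\le\min_{[q_2,p]}H$, that is $F_0(q_1)\le F_0(q_2)$, a contradiction. If $q_2=p$, then $F_0(q_1)=\min_{[q_1,p]}H\le H(p)$ because $p\in[q_1,p]$, whereas $p\in\overline Q$ forces $F_0(p)\in[H(p),+\infty]$, hence $F_0(q_2)=F_0(p)\ge H(p)$, again contradicting $F_0(q_1)>F_0(q_2)$. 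Therefore $\overline\Lambda$ is a singleton.

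The only point requiring care — and the one I expect to be the main obstacle — is the monotonicity step in \textit{(ii)}: in contrast with Proposition~\ref{pro:n1}, the selected element of $q\mapsto\overline G(q,p)$ is not everywhere given by the single-valued flux $G$, the endpoint $q=p$ being genuinely set-valued, so that subcase must be treated separately, using precisely that membership $p\in\overline Q$ already encodes $F_0(p)\ge H(p)$. Everything else is the verbatim symmetric counterpart of the arguments already recorded for $\underline G$ in the proof of Proposition~\ref{p:sub-G}.
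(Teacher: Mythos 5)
Your proof is correct and follows essentially the same route as the paper, which itself only records the argument for $F_0\underline G$ (Proposition~\ref{p:sub-G}) and declares the present statement "similar": the same dichotomy on the sign of $F_0(p)-H(p)$ (with the roles of the two cases swapped), the same intermediate-value argument for the non-trivial case, and the same monotonicity argument for uniqueness of the value. Your separate treatment of the endpoint $q_2=p$ in part \textit{(ii)} is a sound and welcome precaution; the paper handles the analogous set-valued endpoint in the same spirit in the proof of Lemma~\ref{lem:n4}.
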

In order to compose semi-Godunov operators, we first need to make sure that $F_0\underline G$ satisfy the same assumptions as $F_0$. 
\begin{lem}[Properties of $F_0\underline G$ and $F_0\overline G$]\label{lem:n4}
Under the same assumptions, $F_0\underline G$ and $F_0\overline G$  are non-increasing, continuous and semi-coercive.
\end{lem}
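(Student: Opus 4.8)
\textbf{Proof plan for Lemma~\ref{lem:n4}.}
The plan is to prove the three properties (monotonicity, continuity, semi-coercivity) for $F_0\underline G$; the proof for $F_0\overline G$ is symmetric and can be dispatched with a remark. The key tool will be the characterization established in Proposition~\ref{p:sub-G}: for each $p$, the value $(F_0\underline G)(p)$ is the common value $F_0(q)$ over the non-empty set $\underline Q(p)\subset[p,+\infty)$ of points $q$ with $F_0(q)\in\underline G(q,p)$. First I would record the monotonicity of the set-valued map $\underline G$ in each argument (non-decreasing in $q$ on $\{q\ge p\}$ in the sense of the usual ordering extended to $\pm\infty$, and non-increasing in $p$), which follows directly from its definition as $\max_{[p,q]}H$ together with the boundary conventions at $q=p$.

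For \emph{monotonicity} of $F_0\underline G$, I would take $p_1<p_2$ and argue by contradiction that $(F_0\underline G)(p_1)<(F_0\underline G)(p_2)$. Pick $q_i\in\underline Q(p_i)$ realizing each value. Since $F_0$ is non-increasing and the two values are distinct with the value at $p_1$ smaller, we must have $q_1>q_2\ge p_2>p_1$, so both $q_i>p_i$ and $\underline G(q_i,p_i)=\{G(q_i,p_i)\}=\{\max_{[p_i,q_i]}H\}$. Then $G(q_1,p_1)=\max_{[p_1,q_1]}H\ge\max_{[p_2,q_2]}H=G(q_2,p_2)$ because $[p_2,q_2]\subset[p_1,q_1]$, hence $(F_0\underline G)(p_1)=F_0(q_1)=G(q_1,p_1)\ge G(q_2,p_2)=(F_0\underline G)(p_2)$, a contradiction. (The degenerate case $q_1=q_2$ is immediate, and the case $q_2=p_2$ is handled by observing $(F_0\underline G)(p_2)=F_0(p_2)\le H(p_2)$ and comparing with $(F_0\underline G)(p_1)$ directly using $F_0(p_1)\ge F_0(p_2)$ together with the $q=p$ clause of $\underline G$.)

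For \emph{semi-coercivity}, since $(F_0\underline G)(p)=F_0(q)=G(q,p)=\max_{[p,q]}H\ge H(p)$ whenever $q>p$, and $(F_0\underline G)(p)=F_0(p)$ (with $F_0(p)\le H(p)$) whenever $p\in\underline Q(p)$, in both cases one gets $(F_0\underline G)(p)\ge\min(H(p),F_0(p))\ge\dots$; more efficiently, I would simply note the pointwise bound $F_0\wedge H\le F_0\underline G\le F_0$ — the upper bound because $F_0(q)\le F_0(p)$ when $q\ge p$, the lower bound by splitting on the two cases above — whence $F_0\underline G$ inherits semi-coercivity from $F_0$ being semi-coercive and $H$ being coercive (so $F_0\wedge H$ is semi-coercive). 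For \emph{continuity}, the cleanest route is to show $F_0\underline G$ is both upper and lower semi-continuous. Upper semi-continuity: given $p_n\to p$, choose $q_n\in\underline Q(p_n)$; the bound $F_0\wedge H\le F_0\underline G\le F_0$ confines the values to a bounded set, and using coercivity of $H$ one localizes the $q_n$ in a compact set, extracts a limit $q_n\to q$, and passes to the limit in the closed condition $F_0(q_n)\in\underline G(q_n,p_n)$ (the graph of $\underline G$ is closed, by its explicit form and the continuity of $H$) to get $q\in\underline Q(p)$ and $\limsup F_0(q_n)\le F_0(q)$; combined with monotonicity this yields continuity. Alternatively — and this is probably the slickest option — once monotonicity and semi-coercivity are known, continuity follows from the \emph{connectedness} of the range: a monotone function on $\R$ is continuous iff it has no jumps, and a jump would force, via the intermediate-value structure of $q\mapsto G(q,p)$ and $F_0$ both being continuous, the existence at the jump point of the value being simultaneously attained from both sides, contradicting uniqueness in Proposition~\ref{p:sub-G}\ref{Lambda}.

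The main obstacle is the \emph{continuity} argument: one must handle carefully the possibility that the selector $q_n\in\underline Q(p_n)$ runs off to $+\infty$, and that the set-valued map $\underline G$ has the jump discontinuity at $q=p$ built into its definition. Controlling the selectors requires combining the a priori bound $F_0\underline G\le F_0(p_n)$ bounded above with the coercivity of $H$ (which forces $\max_{[p_n,q_n]}H$, hence $q_n$, to stay bounded once the value is bounded) — essentially the same compactness argument already used in the proof of Proposition~\ref{pro:n1}. The rest is bookkeeping with the three-case definition of $\underline G$.
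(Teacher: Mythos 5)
Your proposal is correct and follows essentially the same route as the paper: the same contradiction argument for monotonicity (reducing to $q_2>q_1\ge p_1>p_2$ and comparing Godunov values, with the $q=p$ clause handled separately), the same two-case bound for semi-coercivity (equivalent to the sandwich $F_0\wedge H\le F_0\underline G\le F_0$), and the same compactness-of-selectors plus closed-graph argument for continuity, concluding via the uniqueness of the value in Proposition~\ref{p:sub-G}. The only slips are cosmetic: "both $q_i>p_i$" should allow $q_2=p_2$ (which your parenthetical already repairs), and in the continuity step the conclusion follows from uniqueness of $(F_0\underline G)(p)$ rather than from monotonicity.
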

\begin{proof}
We do the proof only for $F_0\underline G$, the one for $F_0 \overline G$ being similar. 

We first show that $F_0 \underline G$ is non-increasing. Let $p_1> p_2$ and $q_1, q_2$ be such that $(F_0\underline G)(p_i)=F_0(q_i)\in \underline G(q_i,p_i)$ for $i\in \{1,2\}$. In particular,
since $q_i \in \underline Q$, we have $q_i\ge p_i$ thanks to \ref{Q} from Proposition~\ref{p:sub-G}. 

We assume by contradiction that 
$(F_0\underline G)(p_1)>(F_0\underline G)(p_2)$. This implies $F_0 (q_1) > F_0 (q_2)$ and in particular  $q_2> q_1\ge p_1>p_2$. Hence $\underline G(q_2,p_2)=\{G(q_2,p_2)\}$ and so $F_0(q_2)= G(q_2,p_2)\ge G(q_1,p_1)\ge G(p_1,p_1)$. The inequalities follow from monotonicity properties of $G$ in both variables. If $q_1>p_1$, then  $F_0(q_1)=G(q_1,p_1)$ and we get a contradiction: $F_0(q_1) \le F_0 (q_2)$. If $q_1=p_1$, then $\underline G (p_1,q_1) = [-\infty, H(p_1)]$ from which we get $F_0(q_1)\le H(p_1)=G(p_1,p_1) \le F_0 (q_2)$ and we get the same contradiction.
\medskip

We now prove that $F_0\underline G$ is semi-coercive. Let $M>0$. There exists $p_0$ such that for every $p<p_0$, $H(p)\ge M$ and $F_0(p)\ge M$.
Let $p<p_0$. Proposition~\ref{p:sub-G} implies that there exists $q\ge p$  such that $(F_0\underline G)(p)=F_0(q)\in \underline G(q,p).$ If $q=p$, then $(F_0\underline G)(p)=F_0(p)\ge M$. If $q>p$, then $(F_0\underline G)(p)=G(q,p)\ge H(p)\ge M$. This shows that $F_0\underline G$ is semi-coercive.
\medskip

We now prove that $F_0\underline G$ is continuous. Let $p_n\to p$ and $q_n \ge p_n$ be such that $(F_0\underline G)(p_n)=F_0(q_n)\in \underline G(q_n,p_n)$.
From the coercivity of $H$, we get that $(q_n)_n$ is bounded: indeed, either $q_n=p_n$ or $F_0(p_n) \ge F_0(q_n) = G(q_n,p_n) \ge H (q_n)$. 
Hence, up to extract a subsequence (still denoted by $(q_n)_n$), we have $q_n \to q_0\ge p$.

Assume first that $q_{n_j}=p_{n_j}$ along a subsequence $\{n_j\}$. In this case $F_0(p_{n_j})=F_0(q_{n_j})\le H(p_{n_j})$. This implies that $F_0(p)\le H(p)$ and so $F_0(p)\in \underline G(p,p)$.
This means that $p \in \underline Q$ and $F_0 (p) = (F_0 \underline G)(p)$ and
\[(F_0\underline G)(p_{n_j})=F_0(q_{n_j})\to F_0(p)=(F_0\underline G)(p). \]

Assume now that $q_n > p_n$ for $n$ large enough, then $(F_0\underline G)(p_n)= F_0(q_n) = G (q_n,p_n)$. 
Since $q_n\to q_0\ge p$ and $F_0(q_{n}) \le F_0 (p_{n})$, we get $F_0(q_0)\le F_0(p)$ and 
$F_0 (q_0)= G(q_0,p)$.

If $q_0=p$ then $F_0(p) = H(p) \in \underline G (p,p)$. If $q_0 >p$ then  $F_0(q_0)\in \underline G(q_0,p)$.
In both cases, $q_0 \in \underline Q$ and thus $F_0(q_0) = (F_0 \underline G) (p)$. We thus proved that 
$F_0\underline G(p_n)=F_0(q_n)\to F_0(q_0) = F_0\underline G(p)$.
This implies that indeed the whole sequence $\{(F_0 \underline G)(p_n)\}$ converges to $(F_0 \underline G) (p)$.
\end{proof}

We now want to prove that the action of $\underline G$ on the action of $\overline G$ on $F_0$ is in fact the action of $G$ on $F_0$. 
\begin{pro}[Composition of Godunov semi-fluxes]\label{pro:n3}
We have $(F_0\overline G)\underline G =F_0G=(F_0\underline G)\overline G.$
\end{pro}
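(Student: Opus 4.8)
The plan is to prove the first equality $(F_0\overline G)\underline G = F_0 G$; the second one, $(F_0\underline G)\overline G = F_0 G$, follows by the symmetric argument (swapping the roles of $\underline G$/$\overline G$, of $\le$/$\ge$, and of ``$q$ to the right of $p$'' / ``$q$ to the left of $p$''). Fix $p\in\R$. By Lemma~\ref{lem:n4}, the function $F_1:=F_0\overline G$ is non-increasing, continuous and semi-coercive, so Proposition~\ref{pro:n1} applies to $F_1$ as well and $(F_1\underline G)(p)$ is a well-defined real number, call it $\mu$. Similarly let $\lambda:=(F_0G)(p)$. I want to show $\mu=\lambda$.

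First I would record the ``selection'' data. By Proposition~\ref{p:sub-G} applied to $F_1$, there is $q_1\ge p$ with $F_1(q_1)=\mu$ and $\mu\in\underline G(q_1,p)$; concretely, either $q_1=p$ and $\mu=F_1(p)\le H(p)$, or $q_1>p$ and $\mu=G(q_1,p)=\max_{[p,q_1]}H$. Next, by definition of $F_1=F_0\overline G$ evaluated at $q_1$, there is $q_2\le q_1$ with $F_0(q_2)=F_1(q_1)=\mu$ and $\mu\in\overline G(q_2,q_1)$; that is, either $q_2=q_1$ and $\mu=F_0(q_1)\ge H(q_1)$, or $q_2<q_1$ and $\mu=G(q_2,q_1)=\min_{[q_2,q_1]}H$. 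The key point I will exploit is that these two pieces of information together pin down the behaviour of $H$ on the interval $[\,q_2, q_1\,]$, which contains $p$ only sometimes — so I will also need to compare $q_2$ with $p$. Using $\lambda=G(q,p)=F_0(q)$ for a suitable $q$ (existence from Proposition~\ref{pro:n1}), the goal becomes to show that the ``test point'' realising $\mu$ for $F_1\underline G$ can be converted into a test point realising $\lambda=(F_0G)(p)$, and conversely; since by Proposition~\ref{pro:n1}\ref{lambda} the value $(F_0G)(p)$ does not depend on which test point $q$ with $F_0(q)=G(q,p)$ is chosen, it suffices to produce one such $q$ with common value $\mu$.

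So the heart of the argument is: \textbf{show that $F_0(q_2)=G(q_2,p)$}, i.e. that $q_2$ is an admissible test point for $F_0 G$ at $p$ with value $\mu$; then Proposition~\ref{pro:n1}\ref{lambda} gives $\mu=\lambda$ immediately. I would split into the two cases coming from the first selection. If $q_1=p$: then $\mu=F_1(p)\le H(p)$, and the second selection gives $q_2\le p$ with $F_0(q_2)=\mu$ and either $q_2=p$, $\mu=F_0(p)\ge H(p)$ (so $\mu=H(p)=G(p,p)$, done), or $q_2<p$, $\mu=\min_{[q_2,p]}H$; in the latter case I must check $\min_{[q_2,p]}H=G(q_2,p)$, which holds because $q_2<p$ makes $G(q_2,p)=\min_{[q_2,p]}H$ by definition — done. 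If $q_1>p$: then $\mu=\max_{[p,q_1]}H$, in particular $\mu\ge H(p)$ and $\mu\ge H(q_1)$. The second selection gives $q_2\le q_1$. If $q_2=q_1$ then $\mu=F_0(q_1)\ge H(q_1)$; combined with $\mu=\max_{[p,q_1]}H$ I need $\max_{[p,q_1]}H=G(q_1,p)$, true since $q_1\ge p$. If $q_2<q_1$ then $\mu=\min_{[q_2,q_1]}H$, and together with $\mu=\max_{[p,q_1]}H\ge H$ on $[p,q_1]$ this forces $H\ge\mu$ on $[q_2,q_1]$ and $H\le\mu$ on $[p,q_1]$; on the overlap $[p,q_1]\cap[q_2,q_1]$ (nonempty, it contains $q_1$) $H$ equals $\mu$. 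Now one compares $q_2$ and $p$: if $q_2\le p$ then on $[q_2,p]$ we have $H\ge\mu$ while on $[p,q_1]$, $H\le \mu$, and $H(p)$ lies in both so $H(p)=\mu$, whence $G(q_2,p)=\min_{[q_2,p]}H=\mu$ because $H\ge\mu$ there with equality at $p$ — done; if $q_2>p$ then $[q_2,q_1]\subset[p,q_1]$, so simultaneously $H\ge\mu$ and $H\le\mu$ on $[q_2,q_1]$, i.e. $H\equiv\mu$ on $[q_2,q_1]$, and then $G(q_2,p)=\max_{[p,q_2]}H$; since $H\le\mu$ on $[p,q_1]\supset[p,q_2]$ and $H(q_2)=\mu$, we get $G(q_2,p)=\mu$ — and $F_0(q_2)=\mu$ by construction, so again $q_2$ is an admissible test point. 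In every case $F_0(q_2)=G(q_2,p)=\mu$, hence $(F_0G)(p)=\mu=(F_1\underline G)(p)$.

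I expect the main obstacle to be the bookkeeping in the last case ($q_1>p$, $q_2<q_1$): one has to carefully track the two monotonicity-of-$G$ constraints turned into the sup/min descriptions of $H$ on overlapping intervals, and the sub-case split on the sign of $q_2-p$ is where a sloppy argument would go wrong. Everything else is a direct unwinding of the definitions of $\underline G$, $\overline G$, $G$ together with the uniqueness statements in Propositions~\ref{pro:n1} and~\ref{p:sub-G}. The symmetric identity $(F_0\underline G)\overline G = F_0 G$ is obtained by applying the whole argument with $H$, $F_0$ unchanged but the orientation of the real line reversed, which swaps $\underline G\leftrightarrow\overline G$ and $\max\leftrightarrow\min$ while leaving $G$ invariant.
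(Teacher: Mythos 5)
Your argument is correct, and it reaches the conclusion by a slightly different route than the paper. The paper isolates a standalone ``key composition'' statement (Lemma~\ref{lem:n1}) about the semi-fluxes themselves: for any $(q,p)$ there is an intermediate $q'$ with $\overline G(q,q')\cap\underline G(q',p)\neq\emptyset$, and whenever this intersection is non-empty it equals $\{G(q,p)\}$; Proposition~\ref{pro:n3} is then deduced by a short chain of set identities describing $\{F_0G(p)\}$ and $\{(F_0\overline G)\underline G(p)\}$ as the same set of values $F_0(q)$ with $F_0(q)\in\overline G(q,q')\cap\underline G(q',p)$. You instead trace the composition through the two selection points $q_1$ (for $F_1\underline G$ at $p$) and $q_2$ (for $F_0\overline G$ at $q_1$) and verify directly, by the case analysis on the relative positions of $p$, $q_1$, $q_2$, that $q_2$ is an admissible test point for $F_0G$ at $p$ with the same value $\mu$; the uniqueness statement of Proposition~\ref{pro:n1}\ref{lambda} then closes the argument. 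The underlying computations are the same as in the paper's Cases 1--4 of Lemma~\ref{lem:n1} (your four cases correspond exactly to theirs under $q'\leftrightarrow q_1$, $q\leftrightarrow q_2$), and all your sub-cases check out, including the delicate one $q_1>p$, $q_2<q_1$ where the two descriptions $\mu=\max_{[p,q_1]}H=\min_{[q_2,q_1]}H$ force $G(q_2,p)=\mu$ on either side of the dichotomy $q_2\lessgtr p$. What the paper's packaging buys is a reusable, $F_0$-independent lemma and a manifestly symmetric set-theoretic proof that yields both composition identities at once; what your packaging buys is economy, since the uniqueness of the operator values lets you get away with exhibiting a single admissible test point rather than proving a two-sided set equality.
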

In order to prove this proposition,  the following lemma is needed.
\begin{lem}[Key composition result]\label{lem:n1}
  \begin{enumerate}[label=(\roman*)]
  \item \label{first}
    For all $(q,p) \in \R^2$, there exists $q' \in \R$ such that $\overline G (q,q')\cap \underline G(q',p)\ne \emptyset$. Moreover, for such a real number $q'$, we have $\overline G (q,q')\cap \underline G(q',p)=\{G(q,p) \}$.
\item \label{second}
  For all $(q,p)$, there exists $q'\in \R$ such that $\underline G (q,q')\cap \overline G(q',p)\ne \emptyset$. Moreover, for such a real number $q'$, we have $\underline G (q,q')\cap \overline G(q',p)=\left\{G(q,p)\right\}$.
\end{enumerate}
\end{lem}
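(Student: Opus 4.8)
The plan is to prove~(i) in detail; part~(ii) is entirely analogous, with the roles of $\overline G$ and $\underline G$ interchanged and all inequalities reversed, so that non-emptiness forces $q'\le q\wedge p$ and one takes $q'=q\wedge p$ for existence. Fix $(q,p)\in\R^2$. The first step I would carry out is to \emph{locate the admissible $q'$}: since $H$ is continuous and real-valued, $\max_{[a,b]}H$ and $\min_{[a,b]}H$ are finite for every compact interval $[a,b]$, so $\overline G(q,q')$ contains $+\infty$ only when it equals $\{+\infty\}$ (i.e. $q>q'$), and $\underline G(q',p)$ contains $-\infty$ only when it equals $\{-\infty\}$ (i.e. $q'<p$); moreover $\underline G(q',p)$ never contains $+\infty$ and $\overline G(q,q')$ never contains $-\infty$. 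Hence $\overline G(q,q')\cap\underline G(q',p)\ne\emptyset$ already forces $q'\ge q\vee p=:q^*$. This makes clear that $q'$ is far from unique, so the argument for the value must be uniform over all admissible $q'$.

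Next I would \emph{prove existence} by choosing $q'=q^*$ and computing the intersection directly from the definitions: if $q=p$ one gets $[H(q),+\infty]\cap[-\infty,H(q)]=\{H(q)\}=\{G(q,p)\}$; if $q<p$ one gets $\{\min_{[q,p]}H\}\cap[-\infty,H(p)]=\{\min_{[q,p]}H\}=\{G(q,p)\}$, using $\min_{[q,p]}H\le H(p)$; and if $q>p$ one gets $[H(q),+\infty]\cap\{\max_{[p,q]}H\}=\{\max_{[p,q]}H\}=\{G(q,p)\}$, using $\max_{[p,q]}H\ge H(q)$. This also settles the second assertion whenever $q'=q^*$.

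It then remains to \emph{identify the value for an arbitrary admissible $q'$}. By the first step $q'\ge q^*$; if $q'=q^*$ we are done, so assume $q'>q^*$, whence $q'>q$ and $q'>p$ and both sets are singletons: $\overline G(q,q')=\{\min_{[q,q']}H\}$ and $\underline G(q',p)=\{\max_{[p,q']}H\}$. Non-emptiness then means $\min_{[q,q']}H=\max_{[p,q']}H=:v$, and since $q'$ belongs to both $[q,q']$ and $[p,q']$ we get $H(q')=v$. I would finish by a short comparison of extrema over nested compact intervals: if $p\le q$, then $[q,q']\subset[p,q']$ forces $H\le v$ on $[q,q']$ while $\min_{[q,q']}H=v$ forces $H\ge v$ there, so $H\equiv v$ on $[q,q']$ and in particular $H(q)=v$; combined with $[p,q]\subset[p,q']$ this gives $v=H(q)\le\max_{[p,q]}H\le\max_{[p,q']}H=v$, i.e. $G(q,p)=\max_{[p,q]}H=v$. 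Symmetrically, if $p>q$ one finds $H\equiv v$ on $[p,q']$, hence $H(p)=v$, and $v=\min_{[q,q']}H\le\min_{[q,p]}H\le H(p)=v$, i.e. $G(q,p)=\min_{[q,p]}H=v$.

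The step I expect to be the main obstacle is precisely this last one, for two reasons: $q'$ ranges over a whole interval rather than a single point, so the value computation must be genuinely uniform; and the bookkeeping of which endpoint lies in which of the nested intervals $[q,q']$, $[p,q']$, $[p,q]$, $[q,p]$ is easy to get wrong. The clean way around it is the reduction $q'\ge q\vee p$ from the first step, after which the monotonicity of $\min$/$\max$ under inclusion of intervals, together with the continuity of $H$ on compacts, pins the common value down to $G(q,p)$. Everything else reduces to reading off the piecewise definitions of $\underline G$ and $\overline G$.
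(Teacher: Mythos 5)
Your proposal is correct and follows essentially the same route as the paper: you reduce to $q'\ge q\vee p$ by excluding $\pm\infty$ from the intersection, verify existence by taking $q'$ equal to $q$, $p$, or their common value, and in the remaining case $q'>q\vee p$ (the paper's Case~4, where both sets are singletons) you deduce that $H$ is constant on the overlap interval and identify the common value with $G(q,p)$. The only difference is organizational (your split $q'=q\vee p$ versus $q'>q\vee p$ in place of the paper's four explicit cases), not mathematical.
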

\begin{proof}
We only prove \ref{first} since the proof of \ref{second} follows the same reasoning.
  
 We first show that $\overline G (q,q')\cap \underline G(q',p)$ is either empty or equal to the singleton $\{G(q,p)\}$.

Remark that the intersection can only contain real numbers, but neither $+\infty$ nor $-\infty$. Hence, if the intersection
is not empty, then $p \le q'$ and $q \le q'$.  We now distinguish four cases.
\medskip

\noindent{\bf Case 1: $p=q=q'$.} In that case $\underline G(p,p)=[-\infty, H(p)]$ and $\overline G(p,p)=[H(p),+\infty]$ and so the intersection is reduced to a singleton of element $H(p)=G(p,p)=G(q,p)$.\medskip

\noindent{\bf Case 2: $p<q=q'$.} In that case $\overline G(q,q')=[H(q), +\infty]$ and $\underline G(q',p)=\{G(q',p)\}=\{G(q,p)\}$. Since $q\ge p$, we have $G(q,p)\ge G(q,q)=H(q)$ and so the intersection is non-empty and then reduced to $G(q,p)$.
\medskip

\noindent{\bf Case 3: $q<p=q'$.} In that case $\overline G(q,q')=\{G(q,p)\}$ and $\underline G(q',p)=[-\infty, H(p)]$. Since $q\le p$, we have $G(q,p)\le G(p,p)=H(p)$ and so the intersection is reduced to $G(q,p)$.
\medskip

\noindent{\bf Case 4: $q<q'$ and $p<q'$.} In that case $\overline G(q,q')=\{G(q,q')\}$ and $\underline G(q',p)=\{G(q',p)\}$. If the interscetion is not empty, then $G(q,q')=G(q',p)$, which means that 
$$\max_{[p,q']}H=\min_{[q,q']} H,$$ 
i.e. $H$ is constant on $[\max(q,p),q']$. If $p<q$, this implies in particular that 
$$G(q,p)=\max_{[p,q]} H=\max_{[p,q']}H=G(q',p).$$ 
Similarly if $p>q$, we get $G(q,p)=G(q,q')$. In the last case $p=q$, we get 
$$G(q,p)=G(q,q')=G(q',p).$$

\bigskip
We now prove that we can always find a $q'$ such that the intersection is non empty.  If $p=q$, we can take $q'=p=q$ as in Case 1. If $p<q$, we can take $q'=q$ as in Case 2, while if $p>q$, we can take $q'=p$ as in Case 3.
\end{proof}

We are now able to prove Proposition \ref{pro:n3}.
\begin{proof}[Proof of Proposition \ref{pro:n3}]
Let $F_1=F_0\overline G$. We use successively the definition of $F_0 G$, \ref{first} from Lemma~\ref{lem:n1}, the definitions of
$F_0 \overline G$ and of $F_1 \underline G$ to write,
\begin{align*}
  \{ F_0 G (p) \} &= \{ F_0 (q) \text{ for some $q$ s.t. } F_0(q) \in G(q,p)\} \\
& = \{ F_0 (q) \text{ for some $q$ and $q'$ s.t. } F_0 (q) \in \overline G (q,q') \cap \underline G(q',p)\} \\
  \{ F_1 (q') \} &= \{F_0 \overline G (q') \} = \{ F_0 (q) \text{ for some $q$ s.t. } F_0 (q) \in \overline G (q,q')\} \\
  \{ F_1 \underline G (p) \} &= \{ F_1 (q') \text{ for some $q'$ s.t. } F_1 (q') \in \underline G(q',p) \} \\
  & = \{ F_0 (q) \text{ for some $q$ and $q'$ s.t. } F_0 (q) \in \overline G (q,q') \cap \underline G(q',p)\}.
\end{align*}
This implies that $F_0 G (p) = F_1 \underline G (p)= (F_0 \overline G) \underline G$.

Using \ref{second} from Lemma~\ref{lem:n1}, we can follow the same reasoning and get $F_0 G (p) = (F_0 \underline G) \overline G$.
\end{proof}


\subsection{Relaxation and Godunov fluxes}
\label{sec:proof-th:n1}

The proof of Theorem \ref{th:n1} is a direct consequence of the following proposition which makes the link between the semi-relaxation
of $F_0$ and the actions of the Godunov semi-fluxes on $F_0$.
\begin{pro}[Semi-relaxations and Godunov's semi-fluxes]\label{pro:n2}
Assume that $F_0$ is semi-coercive, continuous and non-increasing and that $H$ is continuous and coercive. Then
\(F_0 \underline G=\underline R F_0\)  and \(F_0 \overline G=\overline R F_0.\)
\end{pro}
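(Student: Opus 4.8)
The plan is to establish the two identities $F_0\underline G = \underline R F_0$ and $F_0\overline G = \overline R F_0$ separately; since $\overline G$, $\overline R$, $\inf$, $\vee$ and $\min$ play the roles of $\underline G$, $\underline R$, $\sup$, $\wedge$ and $\max$ respectively, the second identity follows the same scheme as the first, so I would write the first one out in detail and indicate the changes for the second. Fix $p\in\R$. By Proposition~\ref{p:sub-G} the set $\underline\Lambda$ is a singleton, so it is enough to exhibit one $q\in\underline Q$ and to check that $F_0(q)=\underline R F_0(p)=\sup_{q'\ge p}(F_0\wedge H)(q')$.

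First I would split according to the sign of $F_0(p)-H(p)$. If $F_0(p)\le H(p)$, then $F_0(p)\in[-\infty,H(p)]=\underline G(p,p)$, hence $p\in\underline Q$ and $(F_0\underline G)(p)=F_0(p)$; on the other side Remark~\ref{rem:relax-prop-1} (namely $\{F_0\le H\}\subset\{\underline R F_0=F_0\}$) gives $\underline R F_0(p)=F_0(p)$, so the two sides agree. If instead $F_0(p)>H(p)$, then $p\notin\underline Q$, so by \ref{Q} of Proposition~\ref{p:sub-G} there exists $q\in\underline Q$ with $q>p$, which by the definition of $\underline G$ means $F_0(q)=\max_{[p,q]}H=G(q,p)$.

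It then remains, in this last case, to verify $F_0(q)=\sup_{q'\ge p}(F_0\wedge H)(q')$ by proving the two inequalities $F_0(q)\le\underline R F_0(p)$ and $F_0(q)\ge\underline R F_0(p)$. For the first, I would pick $\bar q\in[p,q]$ with $H(\bar q)=\max_{[p,q]}H=F_0(q)$; since $\bar q\le q$ and $F_0$ is non-increasing, $F_0(\bar q)\ge F_0(q)=H(\bar q)$, so $(F_0\wedge H)(\bar q)=H(\bar q)=F_0(q)$ and hence $\underline R F_0(p)\ge F_0(q)$. For the second, I would take an arbitrary $q'\ge p$: if $q'\in[p,q]$ then $(F_0\wedge H)(q')\le H(q')\le\max_{[p,q]}H=F_0(q)$, while if $q'>q$ then monotonicity of $F_0$ gives $(F_0\wedge H)(q')\le F_0(q')\le F_0(q)$; taking the supremum over $q'\ge p$ yields $\underline R F_0(p)\le F_0(q)$. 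Combining both, $(F_0\underline G)(p)=F_0(q)=\underline R F_0(p)$.

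The argument is elementary once the case distinction is in place; the only point requiring a little care --- the main obstacle, such as it is --- is to ensure that in the case $F_0(p)>H(p)$ the chosen $q\in\underline Q$ genuinely satisfies $q>p$, so that $\underline G(q,p)$ reduces to the single Godunov value $G(q,p)=\max_{[p,q]}H$ and the two bounds above are available; this is exactly where one uses that $F_0(p)>H(p)$ forces $p\notin\underline Q$ together with $\underline Q\subset[p,+\infty)$. For $F_0\overline G=\overline R F_0$ one repeats the scheme: if $F_0(p)\ge H(p)$ then $p\in\overline Q$ and both sides equal $F_0(p)$ (now using $\{F_0\ge H\}\subset\{\overline R F_0=F_0\}$), and otherwise one takes $q<p$ in $\overline Q$, so that $F_0(q)=\min_{[q,p]}H$, and checks $F_0(q)=\inf_{q'\le p}(F_0\vee H)(q')$ with the mirror image of the two inequalities above (choosing $\bar q\in[q,p]$ realizing the minimum of $H$, and bounding $(F_0\vee H)(q')$ from below for $q'\le p$).
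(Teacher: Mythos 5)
Your proof is correct and follows essentially the same route as the paper's: the paper also reduces to a single $q'\ge p$ in $\underline Q$, handles $q'=p$ via $\{F_0\le H\}\subset\{\underline R F_0=F_0\}$, and in the case $q'>p$ identifies $\sup_{[p,q']}(F_0\wedge H)$ with $\max_{[p,q']}H=F_0(q')$ using the monotonicity of $F_0$. The only cosmetic difference is that you organize the case split by the sign of $F_0(p)-H(p)$ rather than by whether $q'=p$ or $q'>p$, which amounts to the same thing since $\underline\Lambda$ is a singleton.
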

\begin{proof}
  We only prove that $F_0 \underline G=\underline R F_0$ since the proof of the other equality is similar.
  Let $p$ and $q'\ge p$ be such that
\[(F_0\underline G)(p)=F_0(q')\in \underline G(q',p).\]

If $q'=p$, then $(F_0\underline G)(p)= F_0(p)\le H(p)$. Using Lemma \ref{lem:relax-prop-1}, we deduce that 
\[\underline RF_0(p)=F_0(p)=F_0\underline G (p).\]

If $q'>p$, then $\displaystyle F_0(q')=G(q',p)=\max _{[p,q']}H$. In particular $F_0(q') \ge H(q')$
and by Lemma \ref{lem:relax-prop-1}, we have $\underline RF_0(q')\le F_0(q')$.
Recall also that
\[\underline RF_0(p)=\max\left(\sup_{[p,q']}(F_0 \wedge H), \underline RF_0(q')\right).\]
Since $F_0$ is non-increasing, we have for all $q\in[p,q']$, 
\[F_0(q)\ge F_0(q')=\max _{[p,q']}H.\]
In particular,
\[\sup_{q \in [p,q']} (F_0 \wedge H)(q) = \max_{q \in [p,q']} H(q) = F_0 (q') \ge \underline RF_0(q')\]
and we finally get
\[\underline RF_0(p)=F_0(q')=(F_0\underline G)(p). \qedhere\]
\end{proof}
We now turn to the proof of Theorem \ref{th:n1}.
\begin{proof}[Proof of Theorem \ref{th:n1}]
Lemma \ref{lem:n4} implies that $F_0 \overline G$ satisfies
  the assumptions of Proposition \ref{pro:n2}.  Using Proposition \ref{pro:n2} first to $F_0$ and then to $\hat{F}_0 = F_0 \overline G$, we have
\[\underline R (\overline R F_0)=\underline R (F_0 \overline G)= (F_0\overline G)\underline G.\]
Using Lemma \ref{lem:relax-op} and Proposition \ref{pro:n3}, we then get \(\mathfrak R F_0=F_0G. \)
\end{proof}

\section{The Neumann and Dirichlet problems}
\label{sec:neumann-dir}

\subsection{Strong solutions for the Neumann problem}

This subsection is devoted to the proof of Theorem \ref{t:neumann}. 

\begin{proof}[Proof of Theorem \ref{t:neumann}]
The proof is split in several steps. 
  
\noindent  \textsc{Step 1: the condition $N$ is self-relaxed.}
We recall that
\[
  N(t,x,p) =
  \begin{cases}
    \max \bigg\{ H(t,x,p- \rho n) : \rho \in [0,p \cdot n(x) + \rho_0] \bigg \} & \text{ if } p \cdot n(x) + \rho_0 \ge 0, \\
    \min \bigg\{ H(t,x,p- \rho n) : \rho \in [p \cdot n(x) + \rho_0,0] \bigg\} & \text{ if } p \cdot n(x) + \rho_0 \le 0
  \end{cases}
\]
with $\rho_0 = h(t,x)$. 
For $p=p'-\rho n$ with $p' \perp n$, it is convenient to consider $H_0 (\rho) = H (t,x,p'-\rho n)$ and  $N_0 \colon \rho \mapsto N (t,x,p'- \rho n)$.
In particular,
\[ N_0 (\rho) = \begin{cases}
\displaystyle    \min_{[\rho_0,\rho]} H_0  & \text{ if }  \rho \ge \rho_0, \\
\displaystyle     \max_{[\rho,\rho_0]} H_0  & \text{ if } \rho \le \rho_0.
  \end{cases}
\]
In other words, $N_0(\rho) = G(\rho_0,\rho)$ where $G$ denotes the Godunov flux function. 
We remark that $N_0$ is \emph{self-relaxed} in the sense that $\mathfrak R N_0 = N_0$. 
Indeed, we remark that
\[ ( H_0 (\rho)-N_0 (\rho) ) ( \rho - \rho_0) \ge 0.\]
In particular, thanks to Lemma~\ref{lem:relax-prop-1}, we know that $\underline R N_0 = N_0$ in $(\rho_0,+\infty) \subset \{ N_0 \le H_0 \}$. 
For $\rho \le \rho_0$, we write
\begin{align*}
  \underline R N_0 (\rho) &= \max_{q \ge \rho} (N_0 \wedge H_0) (q) \\
                             & = \bigg( \max_{q \in [\rho,\rho_0]} H_0 (q) \bigg) \vee \underline R N_0 (\rho_0) \\
                             & = N_0 (\rho) \vee N_0 (\rho_0) \\
  &= N_0 (\rho). 
\end{align*}
Hence $\underline R N_0 = N_0$ in $\R$. Similarly, $\overline R N_0 = N_0$ and $\mathfrak R N_0 = N_0$. 
\medskip

We observe next that negative characteristic points of $N_0$ are contained in $(-\infty,\rho_0]$.
Indeed, if $\rho > \rho_0$ and $N_0 (\rho) = H_0(\rho)$, then $N_0 (\rho) = \min_{[\rho_0,\rho]} H_0$ and in particular,
$H_0 \ge N_0 (\rho) = H_0 (\rho)$ in $[\rho_0,\rho]$. In particular, $\rho$ is not a negative characteristic point of $N_0$.
\medskip

\noindent \textsc{Step 2: weak solutions of the Neumann problem are strong $N$-solutions.}
We only treat the case of weak subsolutions since weak supersolutions can be treated similarly.

Let $u \colon Q \to \R$ be a weak solution of \eqref{eq:HJ-neu}. Then Lemma~\ref{lem:weak-cont} implies that $u$ is weakly continuous.

Thanks to Proposition~\ref{pro:reduced}, we only consider a $C^1$ test function  $\phi$ touching $u^*$  from above at $P_0=(t_0,x_0)$ with $x_0 \in \partial \Omega$
of the form
\[ \phi (t,x) = \psi (t,x') + \rho x_d \]
for a negative characteristic point $\rho$ of $N_0$ (recall that $\underline R N_0=N_0$). In particular, $\rho \le \rho_0$.

Consider $r>0$ and $\gamma \in C^1 (\R^{d-1})$ such that \eqref{e:pomega} holds true. 
Then we have the viscosity inequality,
\[
  \phi_t + H(t,x,D \phi) = \min\left(\phi_t + H(t,x,D \phi) ,\rho_0 -\rho \right)\le 0\quad \text{at}\quad P_0.
\]
For $p=D \phi(P_0)$  and  $R\in [0,\rho_0-\rho]$,
the function $\varphi(t,x)=\phi(t,x)+R(x_d - \gamma (x'))$ is still a test function for $u$ at $P_0$. Since
$D' \gamma (x'_0) =0$ and $R + \rho \le \rho_0$, 
\[ \phi_t (t_0,x_0)+ \max_{R\in [0, \rho_0-\rho]}  H(t_0,x_0,D \phi (t_0,x_0) - R n (x_0)) \le 0.\]
Since $\rho = -\frac{\partial \phi}{\partial n} (t_0,x_0)$, this precisely means $\phi_t + N (t_0,x_0,D \phi) \le 0$ at $P_0$. 
\medskip

\noindent \textsc{Step 3: strong $N$-solutions are weak solutions of the Neumann problem.}
We show it for strong $N$-subsolutions since the proof for strong $N$-supersolutions is similar.
Assume that $u$ is a strong $N$-subsolution. Let $\varphi$ be a $C^1$ test function touching $u^*$
from above at $P_0=(t_0,0)$. Letting $\lambda:=\varphi_t(P_0)$ and $p:=D \varphi(P_0)$, we have
\[\lambda+ N(t_0,x_0,p)\le 0.\]
If $\rho= - p \cdot n(x_0) \le \rho_0 = h (t_0,x_0)$, then  $N(t_0,x_0,p)=N_0 (\rho) \ge H_0(\rho) = H (t_0,x_0,p)$, which implies
\[\lambda+ H(t_0,x_0,p)\le 0.\]
In particular,
\[\min(\lambda+H(t_0,x_0,p),h(t_0,x_0) + p \cdot n(x_0))\le 0.\]
If $\rho= -p \cdot n(x_0) > \rho_0 = h (t_0,x_0)$, the previous inequality also holds true. 
\end{proof}

\subsection{Connection with scalar conservation laws}\label{ss.4}

In this subsection, we would like to make a link between the relaxation operator $\mathfrak{R} F_0$ and the theory of boundary conditions for scalar conservation laws\footnotemark[2].
\footnotetext[2]{Morally if $u$ is a strong $\mathfrak R F_0$-solution that is Lipschitz continuous, then the function $v:=u_x$ is expected to be an entropy solution of
$$\left\{\begin{array}{lll}
v_t + H(v)_x=0, & \text{on}&\quad (0,+\infty)_t\times (0,+\infty)_x,\\
v(t,0) \in \mathcal G, & \text{for a.e.}&\quad t\in (0,+\infty)
\end{array}\right.$$
where $v(t,0)$ is a strong (quasi)-trace of $v$ in the sense of Panov \cite{zbMATH05275060}. It is possible to prove it, if $H$ is $C^1$ and $H'$ is not constant on every interval of positive length. But it requires some additional work which is out of the scope of the present paper.
}
 To this end, we consider a linear function,
$$u(t,x)=px+ \lambda t.$$
It is straightforward to check that it is a weak viscosity solution of (\ref{eq:HJ-bc}) if and only if $\lambda=-H(p)$ and
\begin{equation}\label{eq::r2}
(\underline R F_0)(p) \le H(p)=-\lambda  \le (\overline RF_0)(p).
\end{equation}
Then we have
\begin{lem}[Relation with the germ]\label{lem::r3}
Assume  (\ref{eq:HJ-bc}). An element $p\in \R$ satisfies (\ref{eq::r2}) if and only if $p$ is an element of the set (which is called a germ)
\begin{equation}\label{eq::r4}
\mathcal G=\left\{q\in \R,\quad H(q)=\mathfrak R F_0(q)\right\}.
\end{equation}
\end{lem}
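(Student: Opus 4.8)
The plan is to derive the equivalence by a short case split on the sign of $F_0(p)-H(p)$, using only the sandwiching $\underline R F_0\le F_0\le \overline R F_0$ from Lemma~\ref{lem:relax-prop-1}, the dichotomy \eqref{eq:F0H} (established in the proof of Lemma~\ref{lem:relax-op}), and the definition \eqref{eq:defi-relax} of $\mathfrak R F_0$. No estimate is required; the only issue is bookkeeping which of $\underline R F_0$, $\overline R F_0$ coincides with $\mathfrak R F_0$ at $p$.

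First I would record the two basic facts at the point $p$: if $F_0(p)\ge H(p)$ then $\overline R F_0(p)=F_0(p)\ge \underline R F_0(p)\ge H(p)$ and $\mathfrak R F_0(p)=\underline R F_0(p)$; if $F_0(p)\le H(p)$ then $\underline R F_0(p)=F_0(p)\le \overline R F_0(p)\le H(p)$ and $\mathfrak R F_0(p)=\overline R F_0(p)$. When $F_0(p)=H(p)$ both descriptions give $\underline R F_0(p)=\overline R F_0(p)=\mathfrak R F_0(p)=H(p)$, so the borderline case is covered by either branch.

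Then the implication $p\in\mathcal G\Rightarrow$\eqref{eq::r2} is immediate: if $\mathfrak R F_0(p)=H(p)$ and $F_0(p)\ge H(p)$, then $\underline R F_0(p)=H(p)$ while $\overline R F_0(p)=F_0(p)\ge H(p)$, giving $\underline R F_0(p)\le H(p)\le \overline R F_0(p)$; the case $F_0(p)\le H(p)$ is symmetric. Conversely, assuming \eqref{eq::r2}: if $F_0(p)\ge H(p)$ then $H(p)\le \underline R F_0(p)\le H(p)$, hence $\mathfrak R F_0(p)=\underline R F_0(p)=H(p)$; if $F_0(p)\le H(p)$ then $H(p)\le\overline R F_0(p)\le H(p)$, hence $\mathfrak R F_0(p)=\overline R F_0(p)=H(p)$. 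In both cases $p\in\mathcal G$.

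I do not expect any genuine obstacle here: the statement is essentially a rereading of \eqref{eq:defi-relax} in light of $\underline R F_0\le F_0\le \overline R F_0$ and \eqref{eq:F0H}. The only point deserving a word is the consistency of the two definitional branches of $\mathfrak R F_0$ on $\{F_0=H\}$, which is exactly the observation made just after \eqref{eq:defi-relax} in the proof of Lemma~\ref{lem:relax-op}.
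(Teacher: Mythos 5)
Your proof is correct and follows essentially the same route as the paper: both arguments reduce the equivalence to the dichotomy \eqref{eq:F0H} (equivalently, Lemma~\ref{lem:relax-prop-1} and Remark~\ref{rem:relax-prop-1}) and then perform the same case split on the sign of $F_0(p)-H(p)$, using that $\mathfrak R F_0(p)$ equals $\underline R F_0(p)$ or $\overline R F_0(p)$ according to which branch one is in. Nothing is missing.
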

\begin{rem}
For the notion of germ and its properties (maximal germs, complete germs) we refer the reader to \cite{zbMATH06101925}. 
\end{rem}
\begin{rem}
The fact that $\mathfrak R F_0$ is nonincreasing provides to the set $\mathcal G$ the property to be a germ for $H$.
Moreover  it is possible to check that this germ is maximal if and only if it is of the form of (\ref{eq::r4}) for some suitable $F_0$.
With some further work, it is also possible to show that the germ $\mathcal G$ is complete  for instance if $H\in C^1$ (but it is out of the scope of this paper).
\end{rem}

\begin{proof}Recall from Lemma \ref{lem:relax-prop-1} and Remark \ref{rem:relax-prop-1} that 
$$\left\{\begin{array}{l}
(\overline R F_0)(p)\ \left\{\begin{array}{ll}
=F_0(p)&\quad \text{if}\quad F_0(p)\ge H(p)\\
\in [F_0(p),H(p)]&\quad \text{if}\quad F_0(p)\le H(p)\\
\end{array}\right.\\
\\
(\underline R F_0)(p)\ \left\{\begin{array}{ll}
\in [H(p),F_0(p)]&\quad \text{if}\quad F_0(p)\ge H(p)\\
=F_0(p)&\quad \text{if}\quad F_0(p)\le H(p)\\
\end{array}\right.\\
\\
\underline R F_0\le \overline RF_0\\
\\
(\overline R F_0)(p) =H(p)=(\underline R F_0)(p) \quad \text{if}\quad F_0(p)=H(p).\\
\end{array}\right.$$
Hence we deduce that (\ref{eq::r2}) is equivalent to
$$-\lambda=H(p)=\left\{\begin{array}{ll}
(\overline R F_0)(p) &\quad \text{if}\quad F_0(p)\le H(p)\\
(\underline R F_0)(p) &\quad \text{if}\quad F_0(p)\ge H(p)\\
\end{array}\right\} =(\mathfrak R F_0)(p)$$
\textit{i.e.} \(p\in \mathcal G:=\left\{H=\mathfrak R F_0\right\}\)
which ends the proof of the lemma.
\end{proof}

\subsection{Strong solutions for the Dirichlet problem}

In this subsection, we compute the relaxed Dirichlet boundary condition. 

\begin{proof}[Proof of Theorem~\ref{t:dirichlet}]
Let $u\colon Q \to \R$ be a weak viscosity subsolution of \eqref{eq:HJ-dir}.
Let $\phi$ be a $C^1$ test function touching $u^*$ from above at $P_0 = (t_0,x_0)$ with
$x_0 \in \partial \Omega$. Then we have
\[ \min (u^*-g, \phi_t + H(t,x,D \phi)) \le 0 \quad \text{ at } P_0.\]
This boundary condition can be interpreted as follows,
\[ \phi_t + \min (u^*-g - \lambda, H (t,x,D \phi) \le 0 \quad \text{ at } P_0 \]
where $\lambda = \phi_t (t_0,x_0)$ (recall that we look at pointwise inequality and that only the behavior in the normal gradient is taken into account).
We can argue similarly for weak viscosity supersolutions of \eqref{eq:HJ-dir}
and we conclude that the Dirichlet condition can be interpreted
as a dynamic boundary condition with $F_0 (p) = u^*(t_0,x_0)- g(t_0,x_0) - \lambda =: A_0$. 

Recalling the definition of $\underline RF_0$ and $\overline RF_0$, see \eqref{defi:relax-op}, we  compute, 
\begin{align*}
    \underline R F_0 (p) &= \sup_{\rho \ge 0} \min (A_0, H (t_0,x_0,p- \rho n (x_0)) \\
                         & =  \min (A_0, \sup_{\rho \ge 0} H (t_0,x_0,p- \rho n (x_0)) \\
  & = A_0 \\
  \overline R F_0 (p) &= \inf_{\rho \le 0} \max (A_0, H (t_0,x_0,p - \rho n (x_0)) \\
  & =  \max (A_0, \inf_{\rho \le 0} H (t_0,x_0,p - \rho n (x_0)) \\
& = \max (A_0, H_- (t_0,x_0,p)).
\end{align*}
Recalling now the definition of the relaxation operator, see \eqref{eq::r12},
\begin{align*}
  \mathfrak R F_0 (p) &=
  \begin{cases}
     A_0 & \text{ if } H(t_0,x_0,p) \le A_0, \\
    \max (A_0, H_- (t_0,x_0,p)) & \text{ if } H(t_0,x_0,p) \ge A_0 
  \end{cases} \\
  & = \max (A_0, H_- (t_0,x_0,p)).
\end{align*}
We used the fact that $H \ge H_-$ to get the last line. 
Recalling that $A_0 = u^*(t_0,x_0)- g(t_0,x_0) - \lambda$, Theorem~\ref{th:weak-strong} implies
the conclusion of Theorem~\ref{t:dirichlet}.
\end{proof}

\small

\paragraph{Acknowledgements} This research was funded, in whole or in part, by l'Agence Nationale de la Recherche (ANR), project ANR-22-CE40-0010. For the purpose of open access, the author has applied a CC-BY public copyright licence to any Author Accepted Manuscript (AAM) version arising from this submission.

\bibliographystyle{siam}
\bibliography{bib-relax}

\end{document}